 \font\Bbb=msbm10
 \def\R{\hbox{\Bbb R}}
 \newcommand{\Hom}{\operatorname{Hom}}
 \newcommand{\CoDer}{\operatorname{CoDer}}
\newcommand{\Tpoly}{\mathcal T_{poly}}
\newcommand{\Dpoly}{\mathcal D_{poly}}
 \newcommand{\Id}{\operatorname{Id}}
\newcommand{\ide}{\operatorname{id}}
\newcommand{\ot}{\otimes}
\newcommand{\bt}{\bigotimes}
 \newtheorem{theorem}{Theorem}[section]
 \newtheorem{lemma}[theorem]{Lemma}
 \newtheorem{corollary}[theorem]{Corollary}
 \newtheorem{proposition}[theorem]{Proposition}
 \newtheorem{remark}[theorem]{Remark}
 \newtheorem{example}[theorem]{Example}
 \newtheorem{definition}[theorem]{Definition}
\newcommand\ackname{Acknowledgements}
  \newenvironment{acknowledgements}{%
      \titlepage
      \null\vfil
      \@beginparpenalty\@lowpenalty
      \begin{center}%
        \bfseries \ackname
        \@endparpenalty\@M
      \end{center}}%
     {\par\vfil\null\endtitlepage}
\newcommand\abname{Abstract}
  \newenvironment{abstractenv}{%
      \titlepage
      \null\vfil
      \@beginparpenalty\@lowpenalty
      \begin{center}%
        \bfseries \ackname
        \@endparpenalty\@M
      \end{center}}%
     {\par\vfil\null\endtitlepage}
  \newenvironment{abstractenv}{%
      \if@twocolumn
        \section*{\abstractname}%
      \else
        \small
        \begin{center}%
          {\bfseries \abname\vspace{-.5em}\vspace{\z@}}%
        \end{center}%
        \quotation
      \fi}
      {\if@twocolumn\else\endquotation\fi}
\begin{document}

\title[Configuration spaces and $A_\infty$-homotopies]{Compactified configuration space of points on a line and homotopies of $A_\infty$ morphisms}
 
\author{Theo Backman}
\address{Department of Mathematics \\ Stockholm University \\ 106 91 \\ Stockholm \\ Sweden}
\email{theob@math.su.se}

\maketitle

    \setcounter{section}{-1}
\begin{abstractenv}
We construct a configuration space model for a particular 2-colored differential graded operad encoding the structure of two $A_\infty$ algebras with two $A_\infty$ morphisms and a homotopy between the morphisms. The cohomology of this operad is shown to be the well-known 2-colored operad encoding the structure of two associative algebras and of an associative algebra morphism between them.
\end{abstractenv}

\bigskip

\begin{section}{Introduction}
Many important algebraic operads can be reinterpreted as operads of chains on a topological operad. One of the first and most important operads is the topological operad of little disks. The associated {\em chain} operad of little disks was studies in a paper by Cohen \cite{Co}, where he proved that its homology operad coincides with the operad of {\em Gerstenhaber algebras}. Another example is given by the operad of little intervals $D_1(\R)$. The $n$-th part of this operad is (roughly speaking) given by the space of embeddings of $n$ copies of $\R$ into $\R$ such that the image intervals are disjoint. The representations of $D_1(\R)$ are the same thing as $A_\infty$ spaces and the chains of this topological operad is a differential graded operad that is quasi-isomorphic to the operad $\mathcal A_\infty.$ There is however another very useful way to connect the theory of $A_\infty$ algebras to the theory of geometric operads. 
 
Consider $n$ points on the real line modulo the action of the affine group; $x \mapsto \lambda x + c$ where $\lambda \in \R^{+}$ and $c \in \R.$ The space of such configurations of points is an $n-2$ dimensional manifold $C_n(\R)$. This manifold $C_n(\R)$ can be suitably compactified into a closed manifold with corners $\overline{C}_n(\R)$ in a such a way that the whole family $\{ \overline{C}_n(\R)\}_{n \geq 2}$ gives us an operad in the category of smooth manifolds with corners. The associate operad of fundamental chains is identical to the operad of $\mathcal A_\infty$ algebras. Note that in this approach we get a geometric interpretation of the $\mathcal A_\infty$ operad in terms of manifolds - {\em not just topological spaces!}  Therefore this approach gives us new mathematical tools when studying strongly homotopy algebras, as for example, manifolds with corners are always equipped with sheafs of differential forms which one can integrate and which obey the Stokes theorem. Therefore such an interpretation of an algebraic operad in terms of an operad of configuration spaces opens up the possibility of obtaining transcendental results; those results that cannot be achieved just through homological algebra and perturbative methods.  There are two such famous transcendental results due to Kontsevich. 

In the 90s Kontsevich made a ground breaking contribution to the field of mathematical physics by proving his {\em Formality conjecture} \cite{Ko1}. The result gives an $L_\infty$ quasi-isomorphism $$ \mathcal K: (\Tpoly(\R^d),[-,-]_{SN},d=0) \longrightarrow (\Dpoly(\mathcal O (\R^d)),[-,-]_G,d_H)$$
from the Lie algebra of polyvectorfields on $\R^d$ equipped with the Schouten-Nijenhuis bracket and the trivial differential to the Lie algebra of polydifferential operators on smooth functions on $\R^d$ equipped with the Gerstenhaber bracket and the Hochschild differential. The proof is via an explicit construction of the map given by integrals on configuration spaces. Kontsevich formality theorem can be formulated as a morphism of operads, i.e. as a morphism from the operad of fundamental chains of Kontsevich configuration spaces to the operad of Kontsevich graphs. 

The second such transcendental result due to Kontsevich gives an explicit proof of the formality of the little disk operad \cite{Ko2}.

The 2-colored operad $\mathcal Mor(As)_\infty$ also admits a nice configuration space model \cite{Me}. In this case one studies compactifications of configurations of $n$ points on the line modulo just translation. The major achievement of this paper is a construction of a configurations space model for the 2-colored dg operad $\mathcal Ho(As)_\infty$ which controls a pair of $A_\infty$ algebras, $(V,\mu^V),$ $(W,\mu^W)$, a pair of $A_\infty$ morphism between them $f,g:(V,\mu^V) \to (W, \mu^W)$ and a homotopy between these morphism, $$h: f \sim g.$$ Put another way a representation of our 2-colored operad is a diagram in the category of $A_\infty$ algebras like this:

\[\xymatrix{
\left(V,\mu^V\right)
\ar@/^1.5pc/[rr]_{\quad}^{f}="1"
\ar@/_1.5pc/[rr]_{g}="2"
&& \left(W,\mu^W\right)
\ar@{}"1";"2"|(.2){\,}="7"
\ar@{}"1";"2"|(.8){\,}="8"
\ar@{=>}"7" ;"8"^{h} } \]

This time we study a suitable compactification of $n$ different points on a line (without taking a quotient with respect the action of any Lie group). In this construction we recover the earlier configuration space models for $A_\infty$ algebras and $A_\infty$ morphisms which we discussed above. We also calculate the cohomology of the 2-colored dg operad $\mathcal Ho(As)_\infty$ and show that it is equal to $\mathcal Mor(As)$. This result proves that $\mathcal Ho(As)_\infty$ is a non-minimal model of $\mathcal Mor(As).$ This result also implies, after some additional work, that $\mathcal Ho(As)_\infty$ is a non-minimal quasi-free model of the 2-colored dg operad $\mathcal Ho(As)$ which is, by definition, the operad encoding the structure of two dg associative algebras, two algebra morphisms between them and a homotopy between these two morphisms.

\begin{subsection}{Outline}
The paper is divided into three sections.

In section 1 an exposition of some basic facts of algebra, and specifically $A_\infty$ algebras, are given. We will describe a sort of translation between two equivalent definitions of $A_\infty$ algebras and related notions. 

In section 3 we will introduce two families of configuration spaces, describe how they can be compactified to form operads of topological spaces and describe how their associated face complexes are related to $A_\infty$ algebras.

In section 4 some novel results are given. We describe a compactification of the family of configuration space of $n$ points on a line. This space is described to be an operad of smooth manifolds with corners. The face complex is identical to the two-colored operad of homotopies between a pair of $A_\infty$ morphisms, $\mathcal Ho(As)_\infty$. We also determine the cohomology of $\mathcal Ho(As)_\infty$ and prove that it is $\mathcal Mor(As)$

\end{subsection}

\end{section}

 \bigskip 

\begin{section}{Coalgebras and $A_\infty$ algebras}

Through out this text we let $k$ be a field of characteristic zero.

\begin{definition}
A coassociative graded coalgebra $C$ over the field $k$ is a graded $k$-vector space with a degree zero comultiplication map $\Delta : C \to C\ot C$ such that the following diagram commutes:
\[
\xymatrix{ C \ot C \ot C \ar@{<-}[d]^-{\Delta \ot \ide} \ar@{<-}[r]^-{\ide \ot \Delta} & C \ot C \ar@{<-}[d]^-{\Delta} \\
C\ot C \ar@{<-}[r]^-{\Delta} & C
}.\]
\end{definition}
 We say that a coassociative coalgebra is counital if there exist a map $\eta:k \gets C,$ called the counit, such that the following diagram commutes:
\[
\xymatrix{k \ot C \ar@{<-}[rr]^-{\eta \ot \ide } \ar@{<-}[dr]^-{\cong} & & C \ot C \ar@{->}[dl]^-{\Delta} \\
& C \ar@{->}[dl]^-{\cong} \ar@{<-}[dr]^-{\Delta} & \\
 k \ot C \ar@{<-}[rr]^-{\ide \ot \eta} & & C \ot C
}
\]
\begin{definition}
Let $(C,\Delta_C,\eta_C)$ and $(D,\Delta_D,\eta_D)$ be coalgebras. A map of coalgebras $F: C \to D$ is a $k$-linear map such that the following diagrams commutes
\[
\xymatrix{ C  \ar@{->}[d]^-{\Delta_C} \ar@{->}[r]^-{F} & D \ar@{->}[d]^-{\Delta_D} \\
C\ot C \ar@{->}[r]^-{F \ot F} & D \ot D
} \qquad
\xymatrix{ k \ar@{<-}[r]^-{\eta_C} \ar@{->}[d]^-{\ide} & C \ar@{->}[d]^-{F} \\
k \ar@{<-}[r]^-{\eta_D} & D
}\]
 \end{definition}

\begin{definition}
Let $C$ be a graded coalgebra. A linear map $b \in \Hom^n(C,C)$ such that the following diagram commutes
\[
\xymatrix{
C \ar@{->}[rr]^-{b} & & C  \\
C \ot C \ar@{<-}[u]^-{\Delta} \ar@{<-}[rr]^-{\ide\ot b + b \ot \ide} & & C \ot C \ar@{<-}[u]^-{\Delta_C}
}
\]
Is called a degree $n$ coderivation. The set of degree $n$ coderivations of a coalgebra $C$ form a $k$-vector space and is denoted $\CoDer^n(C).$ 
\end{definition}

\begin{remark}
A degree one coderivation $b$ such that $b^2=b \circ b =0$ is called a codifferential.
\end{remark}

\begin{definition}
The pair $(C,b)$ where $C$ is a graded coalgebra and $b$ codifferential of $C$ is called a differential graded coalgebra, or just dg coalgebra.

Morphism of dg coalgebras are morphism of graded coalgebras which commute with codifferentials.
\end{definition}

\begin{definition}\textbf{Tensor coalgebras.}
Let $V$ be a graded vector space. The tensor coalgebra $T_cV$ is as a vector space the direct sum $\oplus_{k \geq 0} V^{\ot i},$ where $V^{\ot i}$ is the $i$-times iterated tensor product with itself, $$V^{\ot i} = \underbrace{V \ot \ldots \ot V}_{i \mbox{\scriptsize{-times}}}.$$  $T_c V$ can be given a coalgebra structure with the coproduct map $$\Delta: T_c V  \to T_c V \ot T_c V$$ given on summand $T_c^n V = V^{\ot n}$ as $$\Delta: (v_1, \ldots, v_n) \to \sum_{i=0}^n (v_1,\ldots,v_i) \ot (v_{i+},\ldots,v_n), $$
where the term for $i=0,n$ are $1 \ot (v_1,\ldots,v_n)$ and $(v_1,\ldots,v_n) \ot 1$ inside $V^{\ot 0} \ot V^{\ot n}$ and $V^{\ot n} \ot V^{\ot 0},$ respectively. 

The reduced tensor coalgebra $\overline{T}_c V$ is as a vector space the direct sum $\oplus_{i \geq 1} V^{\ot i},$ with coproduct $${\Delta}:   \overline{T}_c V  \to \overline{T}_c V \ot \overline{T}_c V$$ given, as above, on summands as $${\Delta}: (v_1,\ldots,v_n) \to \sum_{i=1}^{n-1} (v_1 \ldots v_i) \ot (v_{i+1}, \ldots, v_n).$$
\end{definition}
\begin{remark}
 From the coproduct we define the partial coproducts: 
$$ \Delta^{a,b}_{a+b} := V^{\ot (a+b)} \hookrightarrow  \overline{T}_c V \overset{\Delta}{\longrightarrow } \overline{T}_c V \bt \overline{T}_c V \to V^{\ot a} \bt V^{\ot b}.$$ This will be a convenient short hand in many of the proofs of this section.
\end{remark}

\begin{proposition}
\label{coder}
A map of vector spaces $b:\overline{T}_c V \to V$ can be lifted to a unique coderivation of coalgebras, $$B:\overline{T}_c V \to \overline{T}_c V,$$
such that $pr_1 \circ B = b,$ when $pr_1$ is the natural projection $\overline{T}_c V \to V$. If $B^m_n$ denotes the composition $$ B^m_n :V^{\ot n} \hookrightarrow  \overline{T}_c V \overset{B}{\to } \overline{T}_c V \to V^{\ot m},$$
then the explicit formula for $B^m_n$ is given by $$B^m_n = \begin{cases} 0 \mbox{ if } n<m \\ \sum_{i+j=m-1} \Id^{\ot i} \ot b_{n+1-m} \ot \Id^{\ot j} \end{cases}$$
where $b_a := b|_{V^{\ot a}}.$ Furthermore, the map $B$ is recovered as a product; $$B= \prod_{n \geq 1} B^n \qquad  B^n= \prod_{m \geq 1} B^n_m,$$ and note that $B_a^1 = b_a$ 
\end{proposition}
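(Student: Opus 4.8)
The plan is to treat the proposition as two separate assertions --- existence of a coderivation $B$ lifting $b$ with the stated component formula, and uniqueness of such a $B$ --- handling existence by writing $B$ down explicitly and checking the coderivation identity, and uniqueness by extracting a recursion from that identity. First I would take the displayed formula as the \emph{definition} of the candidate: put $B^m_n=0$ for $n<m$ and $B^m_n=\sum_{i+j=m-1}\Id^{\ot i}\ot b_{n+1-m}\ot\Id^{\ot j}$ for $n\ge m$ (the input/output counts $i+(n+1-m)+j=n$ and $i+1+j=m$ confirm these are maps $V^{\ot n}\to V^{\ot m}$), and set $B|_{V^{\ot n}}=\sum_{m=1}^{n}B^m_n$. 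Since $B^m_n$ vanishes once $m>n$, this sum is finite, so $B\colon\overline T_cV\to\overline T_cV$ is a well-defined linear map, and $pr_1\circ B|_{V^{\ot n}}=B^1_n=b_n$, i.e. $pr_1\circ B=b$ holds on the nose.

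The substantive step is to verify that this $B$ is a coderivation, i.e. $\Delta\circ B=(B\ot\Id+\Id\ot B)\circ\Delta$. I would evaluate both sides on a word $v_1\ot\cdots\ot v_n$ and compare, for each $a,b\ge1$ with $m:=a+b\le n$, the component in $V^{\ot a}\ot V^{\ot b}$ using the partial coproducts $\Delta^{a,b}_{a+b}$ introduced in the Remark above. A short reorganization shows that, for any linear $B=\sum_{m,n}B^m_n\colon\overline T_cV\to\overline T_cV$, the coderivation identity is equivalent to the family
\[
\Delta^{a,b}_{m}\circ B^m_n=\bigl(B^a_{n-b}\ot\Id^{\ot b}\bigr)\circ\Delta^{n-b,b}_n+\bigl(\Id^{\ot a}\ot B^b_{n-a}\bigr)\circ\Delta^{a,n-a}_n .
\]
On the left $\Delta^{a,b}_m$ cuts the length-$m$ output word after its $a$-th letter, so each summand of the candidate $B^m_n$ contributes according to whether its single operator $b_{n+1-m}$ produces a letter among the first $a$ or among the last $b$: the former summands are precisely those of $(B^a_{n-b}\ot\Id^{\ot b})\circ\Delta^{n-b,b}_n$, the latter precisely those of $(\Id^{\ot a}\ot B^b_{n-a})\circ\Delta^{a,n-a}_n$. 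Checking that this really is a sign-compatible, term-by-term bijection is the bookkeeping core of the proof, and the one place I expect to need genuine care.

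For uniqueness, let $B'$ be another coderivation with $pr_1\circ B'=b$; then $D:=B-B'$ is a coderivation with $pr_1\circ D=0$, i.e. $D^1_n=0$ for all $n$. The component identity above holds for any coderivation, and specialized to $a=1,\ b=m-1$ with $m\ge2$ it reads
\[
\Delta^{1,m-1}_{m}\circ D^m_n=\bigl(D^1_{n-m+1}\ot\Id^{\ot(m-1)}\bigr)\circ\Delta^{n-m+1,m-1}_n+\bigl(\Id\ot D^{m-1}_{n-1}\bigr)\circ\Delta^{1,n-1}_n .
\]
Since $\Delta^{1,m-1}_m\colon V^{\ot m}\to V\ot V^{\ot(m-1)}$ is the canonical isomorphism, this determines $D^m_n$ from $D^1_{n-m+1}$ and $D^{m-1}_{n-1}$; an induction on $m$ with base case $D^1_\bullet=0$ then forces $D\equiv0$, hence $B=B'$. (Running the same recursion on $B$ itself with initial data $B^1_a=b_a$ reproduces the closed formula from the first paragraph, so --- once the second paragraph guarantees a lift exists at all --- it gives an alternative derivation of that formula.)

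Finally, the product statement is a repackaging of what has been built: grouping $B$ by the length of the output word yields homogeneous pieces $B^n=\sum_{m\ge1}B^n_m\colon\overline T_cV\to V^{\ot n}$ (a single term on each $V^{\ot m}$, hence well defined even though the family is a priori infinite), and $B=\prod_{n\ge1}B^n$ with $B^n=\prod_{m\ge1}B^n_m$ is exactly this grouping viewed as a map into $\prod_{n\ge1}V^{\ot n}$; the equality $B^1_a=b_a$ is the computation $pr_1\circ B=b$ already made. Conceptually the whole proposition expresses the cofreeness of the reduced tensor coalgebra, but the hands-on argument above is the most economical route here.
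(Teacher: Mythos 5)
Your proposal is correct, and its core mechanism is the same as the paper's: restrict the coderivation identity to a single bi-graded component of the coproduct and induct on the output length. The paper runs this induction directly on $B$ itself, using the component landing in $V^{\ot (M-1)}\ot V$ to force $B^M_n$ from $B^{M-1}_{n-1}$ and $B^1_{n+1-M}=b_{n+1-M}$, which simultaneously yields the closed formula and uniqueness; you use the mirror-image component $V\ot V^{\ot(m-1)}$ and phrase uniqueness through the difference $D=B-B'$ with $D^1=0$, which is an equivalent bookkeeping choice. The genuine difference is that you also carry out the existence half: you \emph{define} $B$ by the closed formula and verify $\Delta\circ B=(B\ot\Id+\Id\ot B)\circ\Delta$ by the term-by-term matching of where the single $b_{n+1-m}$ block falls relative to the cut (and your sketch of that matching, including the Koszul-sign compatibility, is sound). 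The paper's proof starts from ``the equation $\Delta B=(\Id\ot B+B\ot\Id)\Delta$ is true,'' i.e.\ it presupposes that a coderivation lift exists and only derives its components, leaving existence to the standard cofreeness folklore; your version closes that gap at the cost of the extra combinatorial check, while the paper's is shorter but strictly only proves the formula-plus-uniqueness part of the statement.
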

\begin{proof}
The proof is by induction. The case $B^1_n$ is clear from the projection property. Assume that for all $m < M$ we have that $B^m_n$ is given by the formula. The equation $$\Delta B = (\Id \bt B+B\bt \Id ) \Delta$$ is true. Specifically we can restrict its input to be $V^{ \ot n}$ and its output to be in $V^{\ot (M-1)} \bt V,$ in which case the formula becomes:
$$\Delta^{M-1,1}_M B^M_m = (\Id^{ \ot (M-1)} \bt B^1_{n+1-M} + B^{M-1}_{n-1} \bt \Id ) \Delta^{n-1,1}_n.$$
By the induction hypothesis we know that $B^{M-1}_{n-1} = \sum_{i+j=M-2} \Id^{\ot i} \ot b_{n+1-M} \ot \Id^{\ot j}$
so as an $M$-tensor the right hand side has the desired form; it's the formula given for $B^M_n.$
\end{proof}
\begin{proposition}\label{map}
A map of vector spaces $f:\overline{T}_c V \to W$ can uniquely be lifted to a morphism of coalgebras $F: \overline{T}_c V \to \overline{T}_c W,$ such that $pr_1 \circ F = f,$ when $pr_1$ is the natural projection $\overline{T}_c W \to W$. If we let $F^m_n$ be a composition; $$ F^m_n :V^{\ot n} \hookrightarrow  \overline{T}_c V \overset{F}{\to } \overline{T}_c W \to W^{\ot m}$$ and let $f_k=f|_{V^{\ot k}}$ then, explicitly, $F^m_n$ will be of the form 
$$
F^m_n =\begin{cases} 0 \mbox{ if } n<m \\ \sum_{i_1 + \dots + i_m=n} f_{i_1} \ot \ldots \ot f_{i_m} \end{cases}
$$ 
where $F$ can be recovered as the product; $$F = \prod_{m \geq 1} F^m \qquad F^m= \prod_{n \geq 1} F^m_n.$$
\end{proposition}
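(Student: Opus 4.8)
The plan is to mirror the proof of Proposition~\ref{coder}, exploiting that $\overline{T}_c W$ is cofree among conilpotent coalgebras on the space $W$: a morphism of coalgebras with target $\overline{T}_c W$ is completely determined by its composition with $pr_1$. So I would first show, by induction on the target tensor degree $m$, that the coalgebra-morphism condition forces the components $F^m_n$ to be exactly the stated sum; this yields uniqueness and simultaneously produces the only possible candidate for $F$. Afterwards I would check that the map assembled from these components really is a morphism of coalgebras.

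For the base case, the condition $pr_1\circ F = f$ reads $F^1_n = f_n$, which is the formula with the single term $i_1=n$. For the inductive step, suppose $F^m_n$ is given by the formula for all $m<M$. Writing the morphism condition as $\Delta_W\circ F = (F\bt F)\circ \Delta_V$, restricting the source to $V^{\ot n}$ and projecting the target onto the bidegree $W^{\ot(M-1)}\bt W^{\ot 1}$, and using that the partial coproduct $\Delta^{M-1,1}_M:W^{\ot M}\to W^{\ot(M-1)}\bt W$ is the canonical isomorphism, one obtains the recursion
$$\Delta^{M-1,1}_M\circ F^M_n \;=\; \sum_{a+b=n}\bigl(F^{M-1}_a\bt F^1_b\bigr)\circ\Delta^{a,b}_n \;=\; \sum_{a+b=n}\bigl(F^{M-1}_a\bt f_b\bigr)\circ\Delta^{a,b}_n ,$$
the sums running over $a,b\geq 1$. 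Plugging in $F^{M-1}_a=\sum_{i_1+\dots+i_{M-1}=a}f_{i_1}\ot\dots\ot f_{i_{M-1}}$ and reading $f_b$ as the last tensor factor $f_{i_M}$, the right-hand side becomes $\sum_{i_1+\dots+i_M=n}f_{i_1}\ot\dots\ot f_{i_M}$ sitting inside $W^{\ot(M-1)}\bt W$; since $\Delta^{M-1,1}_M$ is invertible this identifies $F^M_n$ with the same expression in $W^{\ot M}$, which is the claimed formula. The same recursion, read with $n<M$, forces $F^M_n=0$ (every split has $a<M-1$, hence $F^{M-1}_a=0$ by induction).

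The one step that genuinely needs an argument — and the place I expect the only real difficulty — is verifying that the map $F=\prod_{m\geq 1}F^m$ built from these components is a morphism of coalgebras, i.e. that it satisfies the full equation $\Delta_W F=(F\bt F)\Delta_V$ and not merely its $(M-1,1)$-component (and is compatible with whatever unit structure is in force, which is vacuous here since $\overline{T}_c V$ carries no counit). Since only finitely many $F^m_n$, namely those with $m\leq n$, act nontrivially on $V^{\ot n}$, the product $F$ is well defined and there is no convergence issue. For the morphism property the cleanest route is again the cofreeness of $\overline{T}_c W$: two coalgebra maps into $\overline{T}_cW$ agreeing after $pr_1$ coincide, and one checks directly that the components of $\Delta_W F$ and of $(F\bt F)\Delta_V$ in each bidegree $W^{\ot p}\bt W^{\ot q}$ agree, which by coassociativity of $\Delta_W$ reduces to the $(p,1)$-cases already encoded in the recursion above.

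It is worth recording the combinatorial picture that makes the formula transparent and serves as a sanity check: $F$ cuts a word $(v_1,\dots,v_n)$ into all ways of writing it as an ordered concatenation of $m$ nonempty subwords and applies $f$ to each block, an operation manifestly compatible with deconcatenation. Throughout, the Koszul signs produced by moving the $f_{i_j}$ past the inputs are suppressed, exactly as in the statement and proof of Proposition~\ref{coder}.
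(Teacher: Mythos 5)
Your proposal is correct and follows essentially the same route as the paper: the base case $F^1_n=f_n$ from the projection property, then induction on the target degree $M$ by restricting $\Delta_W\circ F=(F\bt F)\circ\Delta_V$ to input $V^{\ot n}$ and output $W^{\ot(M-1)}\bt W$ and expanding $F^{M-1}_i$ via the induction hypothesis. The only difference is that you also spell out the existence/verification step (that the assembled $F$ is genuinely a coalgebra morphism), which the paper leaves implicit; that is a reasonable addition but not a departure in method.
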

\begin{proof}
 By the property of the projection, $pr_1 \circ F = f,$ it follows that $F^1_n = f_n.$ We proceed by induction; assume that $$F^{m}_n = \sum_{ i_1 + \dots + i_{m}=n} f_{i_1} \ot \ldots \ot f_{i_m} ,$$ for all $m <M.$ The equation $$(F\bigotimes F) \circ \Delta = \Delta \circ F$$ can be restricted to taking the input $V^{\ot n}$ and having the output $W^{\ot (M-1)} \bigotimes W,$ in which case it becomes $$\Delta_M^{M-1,1} F^M_n = \sum_{i+j=n} (F^{M-1}_i \bt F^1_j) \Delta^{i,j}_n.$$ Now we can expand $F^{M-1}_i$ with the induction hypothesis and compare the two sides of the equation as an $M$-tensor. It follows that $F^M_n$ is given by the formula in the description of the theorem.
\end{proof}

\begin{proposition}\label{homotopy}
Let $F,G: \overline{T}_cV \to \overline{T}_c W$ be two morphisms of coalgebras and let $h: \overline{T}_c V \to W$ be a map of vector spaces then, there exist a unique map $H: \overline{T}_c V \to \overline{T}_c W,$ such that $(H\bigotimes G + F \bigotimes H) \Delta = \Delta H$ and so that $pr_1 \circ H = h$ when $pr_1$ is the natural projection $\overline{T}_c W \to W.$ Define $H^m_n$ as the composition $$ H^m_n :V^{\ot n} \hookrightarrow  \overline{T}_c V \overset{H} {\to} \overline{T}_c W \to W^{\ot m}$$  Explicitly $H^m_n$ is of the following form $$H^m_n = \begin{cases} 0 \mbox{ if } m>n \\ \sum_{a+b=m-1} \sum_{i_1+\ldots + i_a+s+j_1+\ldots+j_{b}=n} F^1_{i_1} \ot \ldots \ot F^1_{i_a} \ot h_{s} \ot G^1_{j_1} \ot \ldots \ot G^1_{j_b}, \end{cases}$$ where $a,b \geq 0,$ $s>0$ and, $F^1_k$ and $G^1_l$ are as in the previous theorem, and $h_i:=h|_{V^{\ot i}}.$ From $H^m_n$ we can recover $H$ by taking the product; $$H = \prod_{m \geq 1} H^m  \qquad H^m = \prod_{n \geq 1} H^m_n.$$
\end{proposition}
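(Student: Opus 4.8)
The plan is to follow closely the pattern of the proofs of Propositions \ref{coder} and \ref{map}: induct on the number $m$ of output tensor factors, extract a recursion from the defining identity $(H\ot G + F\ot H)\Delta=\Delta H$, and solve it using the formulas already obtained for $F$ and $G$ together with the normalization $pr_1\circ H = h$. For $m=1$ the condition $pr_1\circ H = h$ forces $H^1_n = h_n$, which is exactly the asserted formula in the case $a=b=0$, $s=n$.

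For the inductive step assume the formula for $H^m_n$ holds for all $m<M$. Restricting the identity to input $V^{\ot n}$ and to the component of the output in $W^{\ot(M-1)}\bt W^{\ot 1}$ — so that in $H\ot G$ the factor $G$ is forced into $W^{\ot 1}$ and $H$ into $W^{\ot(M-1)}$, while in $F\ot H$ the factor $H$ is forced into $W^{\ot 1}$ and $F$ into $W^{\ot(M-1)}$ — one obtains
\[
\Delta^{M-1,1}_M H^M_n \;=\; \sum_{i+j=n}\Bigl( H^{M-1}_i\bt G^1_j \;+\; F^{M-1}_i\bt H^1_j\Bigr)\,\Delta^{i,j}_n ,
\]
with $H^1_j = h_j$. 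Since $\Delta^{M-1,1}_M$ is nothing but the canonical identification $W^{\ot M}\iso W^{\ot(M-1)}\bt W^{\ot 1}$, this equation determines $H^M_n$ uniquely, which already yields the uniqueness half of the statement.

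To read off the explicit shape, substitute the induction hypothesis for $H^{M-1}_i$ and the formula of Proposition \ref{map} for $F^{M-1}_i$ into the right-hand side and compare $M$-tensors. The term $H^{M-1}_i\bt G^1_j$ reproduces precisely those summands of the proposed $H^M_n$ whose last factor is a $G^1$ (equivalently $b\geq 1$), and the term $F^{M-1}_i\bt h_j$ reproduces precisely those whose last factor is $h_j$ with the preceding $M-1$ factors all of type $F^1$ (equivalently $b=0$); together they exhaust the claimed formula. The vanishing $H^M_n=0$ for $M>n$ is automatic, since every surviving summand forces $i_1+\dots+i_a+s+j_1+\dots+j_b=n$ with all arities $\geq 1$ and $a+1+b=M$. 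Writing $H=\prod_m\prod_n H^m_n$ then completes the description.

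Existence is then had by adopting the displayed formula as the definition of $H$, and the step I expect to demand the most care is the verification that this $H$ satisfies $(H\ot G + F\ot H)\Delta = \Delta H$ as maps into $\overline{T}_c W\bt\overline{T}_c W$, and not merely in the $W^{\ot(M-1)}\bt W^{\ot 1}$ component that drives the recursion. I would check this by projecting both sides onto an arbitrary $W^{\ot p}\bt W^{\ot q}$: applying the deconcatenation $\Delta^{p,q}_{p+q}$ to the string $F^1\ot\cdots\ot F^1\ot h\ot G^1\ot\cdots\ot G^1$ defining $H^{p+q}_n$ cuts it either strictly before $h$, producing an $F^p\bt H^q$ contribution, or at or after $h$, producing an $H^p\bt G^q$ contribution; reconciling the resulting arities with the coproduct on the $V$-side uses coassociativity of $\Delta$ together with the fact that $F$ and $G$ are genuine coalgebra morphisms, so that $\Delta^{p,q}_{p+q}F^{p+q}=\sum(F^p\bt F^q)\Delta$ and likewise for $G$. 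Once set up correctly this last step is purely combinatorial bookkeeping.
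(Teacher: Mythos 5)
Your proposal is correct and follows essentially the same route as the paper: induction on the number $M$ of output tensor factors, with the base case forced by $pr_1\circ H=h$ and the inductive step obtained by restricting the defining identity to input $V^{\ot n}$ and projecting the output onto the $(M-1,1)$ component, then expanding $H^{M-1}_i$ by the induction hypothesis and $F^{M-1}_i$ by Proposition \ref{map}. The only difference is that you additionally spell out the existence verification (that the explicit formula satisfies the relation in every $W^{\ot p}\bt W^{\ot q}$ component, using coassociativity and the morphism property of $F$ and $G$), a point the paper's proof leaves implicit.
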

\begin{proof} We prove this with induction. When $m=1$ this follows from the projection property; $H^1_n=h_n.$ Assume that $$H^m_n = \sum_{a+b=m-1} \sum_{i_1+\ldots + i_a+s+j_1+\ldots+j_{b}=n} F^1_{i_1} \ot \ldots \ot F^1_{i_a} \ot h_{s} \ot G^1_{j_1} \ot \ldots \ot G^1_{j_b}$$ for all $m < M.$ 
Restrict the input to $V^{\ot n}$ and consider the projection to the $(M-1,1)$-th component in the equation $(H\bigotimes G + F \bigotimes H) \Delta = \Delta H$ to get:
\[
		 \Delta_M^{M-1,1} \circ H^N_n
		= \sum_{i+j=n} (F^{M-1}_i \bigotimes H^1_j + H^{M-1}_i \bigotimes G^{1}_j) \circ \Delta^{i,j}_n,
\]
if we expand $H^{M-1}_i$ with the induction hypothesis and $F^{M-1}_i$ with the previous theorem then we see that this, as an $M$-tensor, is precisely the formula that the theorem predicts.
\end{proof}

\begin{definition}
An $A_\infty$ algebra is a graded vector space $V$ equipped with the structure of a codifferential $b_V$ on the associated reduced tensor coalgebra (of the shifted vector space);  $$b_V:\overline{T_c} sV \to \overline{T_c} sV.$$

A morphism of $A_\infty$ algebras $$f:(V,b_V) \to (W, b_W)$$ is a morphism of dg coalgebras $$F: \overline{T_c} sV \to \overline{T_c} sW.$$

Let $F$ and $G$ be the morphism of dg coalgebras $$F,G: (\overline{T_c} sV,b_V) \to (\overline{T_c} sW,b_W),$$ where $b_V$ and $b_W$ are codifferentials giving $V$ and $W$ the structure of $A_\infty$ algebras. We say that a map $$H: \overline{T_c} sV \to \overline{T_c} sW$$ is a homotopy of $F$ and $G$ if it satisfies two relations: \begin{enumerate}
\item $(F \ot H + H \ot G) \circ \Delta_V = \Delta_W \circ H$
\item $F-G = b_W \circ H + H \circ b_V.$
\end{enumerate} 
\end{definition}
 
Proposition \ref{coder}, \ref{map} and \ref{homotopy}  provides us with a way to reinterpret the definitions concerning $A_\infty$ algebras without referencing the tensor coalgebra explicitly.

\begin{theorem}
An $A_\infty$ algebra structure on the graded vector space $V$ is a sequence of maps $m_n : V^{\ot n} \to V$ of degree $2-n$ such that following equations are satisfied 
\[
\begin{split}
 m_1 \circ m_1 &=0 \\
- m_2 \circ (\Id \ot m_1) - m_2\circ (m_1 \ot \Id) + m_1 \circ m_2&= 0 \\
-m_2 \circ (m_2 \ot \Id) - m_2 \circ ( \Id \ot m_2)+m_3 \circ (m_1 \ot \Id^{\ot 2})&\hphantom{=} \\+m_3 \circ(\Id \ot m_1 \ot \Id) +  m_3 \circ (\Id^{\ot 2} \ot m_1)+m_1 \circ m_3 &= 0 \\
 & \hskip2mm\vdots \\
\sum_{s+j+t=n} (-1)^{s+jt}  m_{s+1+t} \circ (\Id^{\ot s} \ot m_j \ot \Id^{\ot t}) &=0 
\end{split}
\]
\end{theorem}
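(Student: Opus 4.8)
The plan is to invoke Proposition~\ref{coder}: a codifferential on $\overline{T_c}sV$ is the same datum as its one-output corestriction, so the condition ``$b_V$ is a codifferential'' is a single quadratic equation in those corestriction components; we unwind it with the explicit formula for $B^m_n$ and then de-suspend to obtain the identities in the statement.

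First I would set up the dictionary between the two descriptions. By Proposition~\ref{coder}, a degree-one coderivation $b_V$ of $\overline{T_c}sV$ is the same as a family of degree-one maps $b_a:=pr_1\circ b_V|_{(sV)^{\ot a}}\colon(sV)^{\ot a}\to sV$, the coderivation being reconstructed from its components $B^m_n=\sum_{i+j=m-1}\Id^{\ot i}\ot b_{n+1-m}\ot\Id^{\ot j}$, with $B^1_a=b_a$. Fixing the suspension isomorphism $s\colon V\to sV$ of degree $-1$, the assignment $b_a=\eps_a\,s\circ m_a\circ(s^{-1})^{\ot a}$, for signs $\eps_a=\pm1$ depending only on $a$ (to be fixed below), is a bijection between such families $\{b_a\}$ and families of maps $m_a\colon V^{\ot a}\to V$, and a degree count gives $\deg m_a=\deg(s^{-1})+\deg(b_a)+a\cdot\deg(s)=1+1-a=2-a$, exactly the degrees in the statement.

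Next I would unwind $b_V\circ b_V=0$. Since $b_V$ has odd degree, $b_V\circ b_V$ is again a coderivation of $\overline{T_c}sV$ (the two cross-terms $b_V\ot b_V$ cancel by oddness), so by the uniqueness part of Proposition~\ref{coder} it vanishes if and only if its corestriction $pr_1\circ b_V\circ b_V$ does. Restricting $pr_1\circ b_V\circ b_V=\sum_m b_m\circ B^m_n$ to $(sV)^{\ot n}$, inserting the explicit formula for $B^m_n$, and reindexing by $q=n+1-m$ and $i=s$, $j=t$, gives
\[ pr_1\circ b_V\circ b_V\big|_{(sV)^{\ot n}}=\sum_{s+q+t=n}b_{s+1+t}\circ\bigl(\Id^{\ot s}\ot b_q\ot\Id^{\ot t}\bigr), \]
so that a codifferential is precisely a family $\{b_a\}$ with all of these maps zero, for every $n\geq1$.

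Finally I would transport this to $V$. Substituting $b_a=\eps_a\,s\circ m_a\circ(s^{-1})^{\ot a}$ into the displayed sum and commuting all copies of $s$ and $s^{-1}$ through the tensor factors by the Koszul sign rule, the suspensions cancel in pairs and the $(s,q,t)$-summand becomes a scalar multiple of $m_{s+1+t}\circ(\Id^{\ot s}\ot m_q\ot\Id^{\ot t})$; with the $\eps_a$ normalized suitably the scalar is exactly $(-1)^{s+qt}$, so that (after renaming $q=j$) the vanishing of $pr_1\circ b_V\circ b_V$ becomes the relation $\sum_{s+j+t=n}(-1)^{s+jt}\,m_{s+1+t}\circ(\Id^{\ot s}\ot m_j\ot\Id^{\ot t})=0$; specializing to small $n$ recovers the first displayed equations. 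The one genuinely delicate point is this last computation: one has to check that the Koszul signs generated when pulling $s^{-1}$ past the odd maps $b_q$ and past the identities, the normalization signs $\eps_{s+1+t}$ and $\eps_q$, and the sign already hidden in the notation $\Id^{\ot s}\ot b_q\ot\Id^{\ot t}$ for an odd $b_q$, combine to the single exponent $s+jt$ and nothing more; everything else is a formal consequence of Proposition~\ref{coder}.
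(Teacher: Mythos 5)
Your proposal is correct and follows essentially the same route as the paper: expand $b_V\circ b_V=0$ via Proposition~\ref{coder}, identify $m_n=s^{-1}\circ (b_V)^n_1\circ s^{\ot n}$, and attribute the signs $(-1)^{s+jt}$ to the Koszul rule for reorganizing suspensions. The only addition is your explicit observation that the square of an odd coderivation is again a coderivation, so vanishing can be checked on the corestriction alone, which is a clean way of making precise what the paper leaves implicit.
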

\begin{proof}
The proof is a matter of expanding the expression $b_V \circ b_V=0$ with proposition \ref{coder} and recognizing that $m_n = s^{-1} \circ {b_V}_1^n \circ s^{\ot n},$ where ${b_V}_1^n$ is the restriction of $b_V$ to $(sV)^{\ot n}$ followed by the projection onto $sV.$ The sign factor comes from applying the Koszul sign rule when shifts are reorganized.
\end{proof}

We will occasionally denote an $A_\infty$ algebra with the pair $(V,m^V),$ where  $m^V$ is the system of maps given in the above theorem.

\begin{theorem}
A morphism of $A_\infty$ algebras $f:(V,m^V)\to (W,m^W)$ is a collection of maps $f_n: V^{\ot n} \to W$ of degree $1-n$ such that 
\[
\sum_{r+s+t=n} (-1)^{r+st} f_{r+1+t} \circ (\Id^{\ot r} \ot m^V_s \ot \Id^{\ot t}) 
= \sum_{q=1}^{n} \sum_{i_1 + \ldots + i_q=n} (-1)^{p} m^W_q \circ (f_{i_1} \ot  \ldots \ot f_{i_q})
\]
where $p=(q-1)(i_1-1)+(q-2)(i_2-1)+\ldots+2(i_{q-2}-1)+(i_{q-1}-1).$
\end{theorem}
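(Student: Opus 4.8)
The plan is to argue exactly as in the proof of the preceding theorem characterising $A_\infty$ structures: unwind the defining identity of a morphism of dg coalgebras using Propositions \ref{map} and \ref{coder}, corestrict onto the cogenerators, and then transport the resulting identity through the suspension isomorphisms, collecting Koszul signs.

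By definition $f$ is a morphism $F : (\overline{T_c} sV, b_V) \to (\overline{T_c} sW, b_W)$ of dg coalgebras. By Proposition \ref{map} the underlying graded coalgebra morphism $F$ is the same datum as the sequence of its corestrictions $F^1_n = pr_1 \circ F|_{(sV)^{\ot n}}$; setting $f_n := s^{-1} \circ F^1_n \circ s^{\ot n}$ gives maps of degree $1-n$, and $F$ is reconstructed from the $f_n$ via the product formula of Proposition \ref{map}. The only remaining condition is the chain-map identity $F \circ b_V = b_W \circ F$. I would first observe that $\Phi := F \circ b_V - b_W \circ F$ is an $(F,F)$-coderivation, i.e. satisfies $\Delta_W \circ \Phi = (\Phi \bt F + F \bt \Phi) \circ \Delta_V$, which follows at once from $F$ being a coalgebra morphism and $b_V, b_W$ being coderivations. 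Hence, by the uniqueness clause of Proposition \ref{homotopy} applied with $G = F$ (the zero map is the unique $(F,F)$-coderivation with vanishing corestriction), $\Phi = 0$ if and only if $pr_1 \circ \Phi = 0$; that is, $F$ is a morphism of dg coalgebras precisely when
\[
pr_1 \circ F \circ b_V \big|_{(sV)^{\ot n}} \;=\; pr_1 \circ b_W \circ F \big|_{(sV)^{\ot n}} \qquad \text{for every } n \ge 1 .
\]

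Next I would make both sides explicit. The left side equals $\sum_{m} F^1_m \circ (b_V)^m_n$; inserting the formula for $(b_V)^m_n$ from Proposition \ref{coder} and writing $m = r+1+t$ rewrites it as $\sum_{r+s+t=n} F^1_{r+1+t} \circ (\Id^{\ot r} \ot (b_V)_s \ot \Id^{\ot t})$. The right side equals $\sum_q (b_W)_q \circ F^q_n$; inserting the formula for $F^q_n$ from Proposition \ref{map} rewrites it as $\sum_q (b_W)_q \circ \sum_{i_1 + \dots + i_q = n} F^1_{i_1} \ot \dots \ot F^1_{i_q}$. Finally I would replace the maps $(b_V)_s$, $(b_W)_q$ and $F^1_k$ by their desuspensions $m^V$, $m^W$ and $f_k = s^{-1} \circ F^1_k \circ s^{\ot k}$, as in the preceding theorem, cancel the internal suspensions, and record the Koszul sign incurred each time a degree-one map $s^{\pm 1}$ is commuted past a tensor factor or past one of the maps $m^V, m^W, f_k$. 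The same bookkeeping as in the preceding theorem gives the sign $(-1)^{r+st}$ on each left-hand summand, whereas on the right one gets $(-1)^{p}$ with $p = (q-1)(i_1-1) + (q-2)(i_2-1) + \dots + (i_{q-1}-1)$; this is the asserted identity. Conversely, any family $(f_n)$ of maps of degree $1-n$ satisfying these equations assembles, via Proposition \ref{map}, into a graded coalgebra morphism $F$ for which the displayed corestricted identity holds, hence, by the reduction above, into a genuine $A_\infty$ morphism.

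All of the conceptual content sits in Propositions \ref{coder}, \ref{map} and \ref{homotopy}; the only real obstacle is the sign bookkeeping of the last step, in particular checking that shuffling the suspensions through $F^1_{i_1} \ot \dots \ot F^1_{i_q}$ relative to $(b_W)_q$ produces exactly the weight $p = \sum_{a=1}^{q-1}(q-a)(i_a-1)$ and not some other symmetric expression in the $i_a$. I would do this by desuspending one tensor slot at a time, from right to left, tallying the accumulated exponent.
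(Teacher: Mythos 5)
Your proposal is correct and follows essentially the same route as the paper: restrict $B_W\circ F=F\circ B_V$ to input $(sV)^{\ot n}$ and output $sW$, expand both sides with the explicit formulas of Propositions \ref{coder} and \ref{map}, and then desuspend, collecting the Koszul signs from reorganizing the shifts. Your additional observation that $F\circ b_V-b_W\circ F$ is an $(F,F)$-coderivation, so that by the uniqueness clause of Proposition \ref{homotopy} (with $G=F$) the vanishing of its corestriction already forces the full chain-map identity, is a point the paper leaves implicit; it correctly secures the converse direction and is a welcome refinement rather than a different approach.
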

\begin{proof}
Let $F: (\overline{T}_c sV, B_V) \to (\overline{T}_c sW, B_W)$ be a coalgebra morphism. Explicitly $f_n$ is given as $s^{-1}\circ F^1_n \circ s^{\ot n},$ where $F^1_n$ is the restriction of $F$ to the $n$:th component followed by the projection to the first;
$F^1_n: (sV)^{\ot n} \to sW.$ 

We start with the equation $B_W \circ F = F \circ B_V.$ In it we restrict the input to $(sV)^{\ot n}$ and output to $sW$. The result is $$\sum_{i=1}^n (B_W)^1_i \circ F^i_n = \sum_{j=1}^n F^1_j \circ (B_V)^j_n.$$  By proposition \ref{coder} and \ref{map} $B^m_n = \sum_{i+j=m-1} \Id^{\ot i} \ot B^1_s \ot \Id^{\ot j}$ and $F^m_n= \sum_{n_1+\ldots+n_m=n} F^1_{n_1} \ot \ldots \ot F^1_{n_m}.$ Using these explicit formulas we arrive at the expression $$\sum_{i=1}^n (B_W)^1_i \circ (\sum_{n_1+\ldots+n_i=n} F^1_{n_1} \ot \ldots \ot F^1_{n_i}) = \sum_{i=1}^{n} F^1_{i} \circ ( \sum_{a+b =i-1} \Id^{\ot a} (B_V)^1_{n+1-i} \ot \Id^{b}) $$
\end{proof}

\begin{theorem}
Let $(V,\mu^V)$ and $(W,\mu^W)$ be two $A_\infty$ algebras and let $f,g: (V,\mu^V) \to (W,\mu^W)$ be two $A_\infty$ algebra morphisms given on the form of maps \[\begin{split} f_n:V^{\ot n} &\to W \\ g_n:V^{\ot n} &\to W\end{split}.\] A system of maps of graded vector spaces $h_n:V^{\ot n} \to W$ of degree $-n$ is a homotopy of $f$ and $g$ if
\[
\begin{split}
f_n - g_n &=   \sum_{m=1}^n \sum_{\begin{array}{c} \scriptstyle{k+l = m-1}  \\  \scriptstyle{i_1+\ldots+i_k+t+j_1+\ldots j_l=n}  \end{array}} (-1)^s \mu_m^W  \circ \left( f_{i_1} \ot \ldots \ot f_{i_k} \ot h_t \ot g_{j_1} \ot \ldots \ot g_{j_l} \right) \\ & + \sum_{i+j+k=n} (-1)^{ij+k} h_{i+1+k} \circ \left( \Id^{\ot i} \ot \mu_j^V \ot \Id^{\ot k} \right),
\\ &  s=l+\sum_{1\leq a \leq l} (1-j_a)(n-\sum_{b \geq a} j_b ) + t \sum_{1\leq a \leq k} i_a + \sum_{2 \leq a \leq k} (1-i_a)(\sum_{b < a} i_b)
\end{split}
\]
\end{theorem}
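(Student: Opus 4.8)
The plan is to follow the template of the two preceding theorems: transport the defining conditions to the cofree picture via Propositions \ref{coder}, \ref{map} and \ref{homotopy}, restrict the relevant equation to a single arity, expand using the explicit component formulas, and desuspend.

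First I would fix the dictionary. Let $b_V$ and $b_W$ be the codifferentials on $\overline{T}_c sV$ and $\overline{T}_c sW$ encoding $\mu^V$ and $\mu^W$, so that $\mu^V_j = s^{-1}\circ (b_V)^1_j\circ s^{\ot j}$ (and similarly for $\mu^W$); let $F,G : \overline{T}_c sV \to \overline{T}_c sW$ be the coalgebra morphisms lifting $f$ and $g$, so that $f_i = s^{-1}\circ F^1_i\circ s^{\ot i}$ and $g_j = s^{-1}\circ G^1_j\circ s^{\ot j}$; and let $H$ be the unique map with $pr_1\circ H = h$ produced by Proposition \ref{homotopy}, so that $h_t = s^{-1}\circ H^1_t\circ s^{\ot t}$. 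By construction $H$ automatically satisfies $(F\ot H + H\ot G)\circ \Delta_V = \Delta_W\circ H$, which is condition (1) in the definition of a homotopy; hence the content of the theorem lies entirely in rewriting condition (2), namely $F - G = b_W\circ H + H\circ b_V$, in components.

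Next I would restrict $F - G = b_W\circ H + H\circ b_V$ to the summand $(sV)^{\ot n}$ of the source and project onto $sW$. Using $(b_W\circ H)^1_n = \sum_{i\ge 1}(b_W)^1_i\circ H^i_n$ and $(H\circ b_V)^1_n = \sum_{j\ge 1} H^1_j\circ (b_V)^j_n$, together with the formula $(b_V)^j_n = \sum_{a+c=j-1}\Id^{\ot a}\ot (b_V)^1_{n+1-j}\ot \Id^{\ot c}$ of Proposition \ref{coder} and the formula
\[
H^i_n = \sum_{k+l = i-1}\ \sum_{i_1+\cdots+i_k+t+j_1+\cdots+j_l = n} F^1_{i_1}\ot\cdots\ot F^1_{i_k}\ot H^1_{t}\ot G^1_{j_1}\ot\cdots\ot G^1_{j_l}
\]
of Proposition \ref{homotopy} (where $H^1_t = h_t$ at the cofree level), the equation turns into a completely explicit identity of maps $(sV)^{\ot n}\to sW$: on the right-hand side, $\sum_i (b_W)^1_i\circ H^i_n$ is a sum, over $m\ge 1$, over a splitting $k+l=m-1$, and over a composition of $n$, of terms $(b_W)^1_m\circ(F^1_{i_1}\ot\cdots\ot F^1_{i_k}\ot H^1_t\ot G^1_{j_1}\ot\cdots\ot G^1_{j_l})$, while $\sum_j H^1_j\circ (b_V)^j_n$ is a sum over $i+j+k=n$ of terms $H^1_{i+1+k}\circ(\Id^{\ot i}\ot (b_V)^1_j\ot \Id^{\ot k})$. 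These are exactly the two sums appearing in the statement of the theorem, written with $(b_V)^1$, $(b_W)^1$, $F^1$, $G^1$, $H^1$ in place of $\mu^V$, $\mu^W$, $f$, $g$, $h$ and with no signs yet.

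Finally I would conjugate this identity by the suspension maps: every map $(sV)^{\ot n}\to sW$ is replaced by $s^{-1}\circ(-)\circ s^{\ot n}$, and one substitutes $(b_V)^1_j = s\circ\mu^V_j\circ (s^{-1})^{\ot j}$, $(b_W)^1_m = s\circ\mu^W_m\circ(s^{-1})^{\ot m}$, $F^1_i = s\circ f_i\circ(s^{-1})^{\ot i}$, $G^1_j = s\circ g_j\circ(s^{-1})^{\ot j}$, $H^1_t = s\circ h_t\circ(s^{-1})^{\ot t}$. Each substitution leaves behind a string of suspensions and desuspensions that must be commuted past the surrounding tensor factors, and every such commutation contributes a Koszul sign determined by the relevant integer degrees ($2-j$ for $\mu^V_j$, $1-i_a$ for $f_{i_a}$, $-t$ for $h_t$, $1-j_b$ for $g_{j_b}$). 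Assembling all of these signs gives precisely the exponent
\[
s = l + \sum_{1\le a\le l}(1-j_a)\Big(n - \sum_{b\ge a} j_b\Big) + t\sum_{1\le a\le k} i_a + \sum_{2\le a\le k}(1-i_a)\Big(\sum_{b<a} i_b\Big)
\]
on the first line of the theorem, and the sign $(-1)^{ij+k}$ (from commuting the suspension past $\Id^{\ot i}\ot \mu^V_j\ot \Id^{\ot k}$, as in the $A_\infty$-algebra and $A_\infty$-morphism theorems above) on the second. The only genuine obstacle is this sign bookkeeping, which I would carry out one suspension at a time exactly as in those two earlier proofs, rather than try to verify the closed form in one stroke.
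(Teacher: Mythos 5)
Your proposal is correct and follows essentially the same route as the paper: lift $f,g,h$ to $F,G,H$ via Propositions \ref{map} and \ref{homotopy}, note that the coderivation-type relation for $H$ holds by construction so only the chain-homotopy equation $F-G=b_W\circ H+H\circ b_V$ carries content, restrict that equation to input $(sV)^{\ot n}$ and output $sW$, expand with the explicit component formulas, and recover the stated signs from the Koszul rule under desuspension. The paper's own proof is exactly this argument in sketch form, so no further comparison is needed.
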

\begin{proof} The proof is along the lines of the previous theorems. Use proposition \ref{map}, \ref{homotopy} to lift $h$ to a map $H,$ $f$ to $F$ and $g$ to $G.$ In that setting you can apply the rule for homotopy, project the formula to the first component and lastly you recognize the sign that comes from the shifts.
\end{proof} 

\end{section}

\bigskip 

\begin{section}{The Configuration Spaces $Conf_n(\mathbb{R}),$ $C_n(\mathbb{R})$ and $\mathfrak C_n(\mathbb{R})$ }

\begin{subsection}{Families of uncompactified configuration spaces}

Given a set $A$ we define the configuration space $Conf_A(\R)$ as the set of injections of the set $A$ into the real line; $$Conf_A(\R) :=\{A \hookrightarrow \R \}.$$ We think of this as $|A|$ distinct real points labeled by the elements from $A.$ In the special case when $A=[n]$ we use the notation $$Conf_n (\R) := Conf_{[n]}(\R).$$ Sometimes we will consider the full set of maps $A \to \R,$ and for it we introduce the notation $$\widetilde{Conf_A}(\R) := \{A \to \R \}.$$ 

The set $Conf_n(\R)$ is a real oriented manifold of dimension $n.$ As a space $Conf_n(\mathbb{R})$ is the union of $n!$ connected components, all isomorphic to $$Conf_n^o(\mathbb{R}) := \{x_1 < x_2 < \ldots < x_n\}.$$ The orientation is given on $Conf_n^o(\mathbb{R})$ as the volume form $dx_1 \wedge dx_2 \wedge \ldots \wedge x_n.$ The group $S_n$ acts on $Conf_n(\R)$ by permuting the elements of $[n].$ We assume that the action of $S_n$ is orientation preserving on $Conf_n(\mathbb{R})$ and this fixes the orientation on all connected components of $Conf_n(\mathbb{R}).$ 

The 2-dimensional Lie group $G_{(2)} = \R^{+} \ltimes \R$ acts freely on $Conf_n(\R)$ via the action $$(x_1,\ldots,x_n)\times (\lambda,\nu) =(\lambda x_1 + \nu, \ldots, \lambda x_n+\nu ).$$ The quotient space from this action is a $(n-2)$-dimensional real oriented manifold. Where we choose to represent equivalence classes of this quotient space with elements of the form $(0=x_1 < x_2 \ldots < x_{n-1} < x_n = 1 ),$ when we do so we will use the notation $C_n^o(\R);$ $$C_n^o(\R) := \{ [(0=x_1 < x_2 \ldots < x_{n-1} < x_n = 1 )] \}.$$ The orientation is given by the form $dx_2 \wedge \ldots \wedge dx_{n-1}.$ We also define $C_n(\R) := S_n \times C_n^o(\R).$

Alternatively we can represent equivalence classes of $$Conf_n(\mathbb{R}) / G_{(2)}$$ with elements $p=(x_1,\ldots,x_n) \in Conf_n(R)$ subject to $$x_c(p) = \frac{1}{n} \sum x_i =0 \qquad  ||p||=\sqrt{\sum (x_i -x_c(p))^2} = 1.$$ We define $$C_n^{st}(\R) := \{p \in Conf_n(\R) | x_c(p)=0, ||p||=1\}$$ and also $$\widetilde{C_n^{st}}(\R) := \{p \in \widetilde{Conf_n}(\R) | x_c(p)=0, ||p||=1\}.$$

The 1-dimensional Lie group $G_{(1)} =  \R$ acts on $Conf_n(\R)$ by translation $$(p,\nu) \mapsto p+\nu,$$ and we denote the quotient space $$\mathfrak C_n(\R) := Conf_n(\R) / G_{(1)}.$$ 
Define $$\mathfrak C_n^{st}(\R) :=\{ p \in Conf_n(\R) | x_c(p)=0\} \subset Conf_n(\R).$$

We have three isomorphisms associated to these configuration spaces.
\begin{enumerate}
\item The space $C_n(\R)$ is naturally isomorphic to $C_n^{st}(\R)$ 
\item We have $$\Psi_n : \mathfrak C_n(\R) \cong  C_n^{st}(\R) \times (0,1)$$ given by $$p \mapsto \left( \frac{p-x_c(p)}{||p||} , \frac{||p||}{1+||p||} \right).$$
\item We have $$\Phi_n :Conf_n(\R) \cong C_n^{st}(\R) \times (0,1) \times (-1,1)$$ given by 
 $$p \mapsto \left( \frac{p-x_c(p)}{||p||} , \frac{||p||}{1+||p||}, \frac{x_c(p)}{1+|x_c(p)|} \right).$$
\end{enumerate} 

These isomorphisms open up the door for compactifications of these configuration spaces which we discuss next. 
\end{subsection}

\begin{subsection}{A compactification of $C_n(\R)$}
We introduce a topological compactification $\overline{C_n}(\R)$ as the closure of the following injections 
\[ \xymatrix{  C_n(\R) \ar@{->}[r]^{\prod \pi_A \phantom{asdf}}   & \displaystyle \prod_{|A| \subset [n], |A| \geq 2} C_A(\R) \ar@{->}[r]^{\simeq} &  \displaystyle \prod_{|A| \subset [n], |A| \geq 2} C_A^{st} (\R) \ar@{^{(}->}[r]^{}   &  \displaystyle \prod_{|A| \subset [n], |A| \geq 2} \widetilde{C}_A^{st}(\R) }\]

The codimension one boundary strata of the configurations space $\overline{C}_n(\R)$ is given by \begin{itemize}
\item $$\partial \overline {C}_n(\R) = \bigcup_{A \subset [n]} \overline {C}_{n- |A|+1}(\R) \times \overline{C}_{|A|}(\R),$$ where $A$ is a connected proper subset of $[n]$ with two or more elements.
\item The face complex on $\overline{C}_\bullet (\R)$ has the natural structure of a dg free operad;
$$ \mathcal{F}ree\left \langle \xy
(1,-5)*{\ldots},
(-13,-7)*{_{i_1}},
(-8,-7)*{_{i_2}},
(-3,-7)*{_{i_3}},
(7,-7)*{_{i_{q-1}}},
(13,-7)*{_{i_q}},
 (0,0)*{\bullet}="a",
(0,5)*{}="0",
(-12,-5)*{}="b_1",
(-8,-5)*{}="b_2",
(-3,-5)*{}="b_3",
(8,-5)*{}="b_4",
(12,-5)*{}="b_5",
\ar @{-} "a";"0" <0pt>
\ar @{-} "a";"b_2" <0pt>
\ar @{-} "a";"b_3" <0pt>
\ar @{-} "a";"b_1" <0pt>
\ar @{-} "a";"b_4" <0pt>
\ar @{-} "a";"b_5" <0pt>
\endxy \right \rangle_{q \geq 2}$$
where the differential acts as follows 
\[
\partial \left( \xy
(1,-5)*{\ldots},
(-13,-7)*{_{i_1}},
(-8,-7)*{_{i_2}},
(-3,-7)*{_{i_3}},
(7,-7)*{_{i_{q-1}}},
(13,-7)*{_{i_q}},
 (0,0)*{\bullet}="a",
(0,5)*{}="0",
(-12,-5)*{}="b_1",
(-8,-5)*{}="b_2",
(-3,-5)*{}="b_3",
(8,-5)*{}="b_4",
(12,-5)*{}="b_5",
\ar @{-} "a";"0" <0pt>
\ar @{-} "a";"b_2" <0pt>
\ar @{-} "a";"b_3" <0pt>
\ar @{-} "a";"b_1" <0pt>
\ar @{-} "a";"b_4" <0pt>
\ar @{-} "a";"b_5" <0pt>
\endxy \right) = \sum_{k=1}^{q-2} \sum_{l=2}^{n-k} \epsilon(k,l) \begin{xy}
<0mm,0mm>*{\bullet},
<0mm,0.8mm>*{};<0mm,5mm>*{}**@{-},
<-9mm,-5mm>*{\ldots},
<-9mm,-7mm>*{_{i_1\ \,   \ \ \  \, i_k}},
<13mm,-7mm>*{_{i_{k+l+1}\ \, \ \ \  \, i_q}},
<0mm,-10mm>*{...},
<14mm,-5mm>*{\ldots},
<-0.7mm,-0.3mm>*{};<-13mm,-5mm>*{}**@{-},
<-0.6mm,-0.5mm>*{};<-6mm,-5mm>*{}**@{-},
<0.6mm,-0.3mm>*{};<20mm,-5mm>*{}**@{-},
<0.3mm,-0.5mm>*{};<8mm,-5mm>*{}**@{-},
<0mm,-0.5mm>*{};<0mm,-4.3mm>*{}**@{-},
<0mm,-5mm>*{\bullet};
<-5mm,-10mm>*{}**@{-},
<-2.7mm,-10mm>*{}**@{-},
<2.7mm,-10mm>*{}**@{-},
<5mm,-10mm>*{}**@{-},
<2mm,-12mm>*{_{i_{k+1}}\ \  \ \ _{i_{k+l}}},
\end{xy} 
\]
Where the factor $\epsilon(k,l)$ is a sign that can be worked out to be $(-1)^{k+l(n-k-l)+1}$.
Representations of this operad in differential graded vector space are given by $A_\infty$ structures. Thus this is a description of the $\mathcal A_\infty$ operad.
\end{itemize}

\begin{subsection}{The space $\overline{C}_n(\R)$ as a smooth manifold with corners}
Let $RT_{n,l} $ be the set of rooted trees with $n$ legs and $l+1$ internal vertices. The set $RT_{n,l}$ parametrizes the codimension $l$ boundary strata of $\overline{C}_n(\R)$ in the following sense. Each tree $t\in RT_{n,l}$ describes a space $C_t(\R)$ which is defined as the product
$$C_t(\R) :=\prod_{v \in vert(t)} C_{|in(v)|}(\R),$$ where, like before, $vert(t)$ denote the set of internal vertices of $t$ and $in(v)$ the set of input edges at the vertex $v.$ From this one gets a description of $\overline{C}_n(\R)$ as a stratified disjoint union of spaces
$$\overline{C}_n(\R) = \coprod_{l\geq 0} \prod_{t \in RT_{n,l}} C_t(\R).$$

To make the compactified configuration space $\overline{C}_n(\R)$ into a smooth manifold with corner we shall define coordinate charts $U_t$ near the boundary stratum $C_t(\R).$ We do this for a specific tree $t$ but the general procedure should be clear from the given example. Let $t$ be the tree 
\[
t= \quad \xy
(-10.5,-2)*{_1},
(17,-18)*{_3},
(11,-18)*{_5},
(23,-18)*{_8},
(3,-10)*{_6},
(8,-10)*{_2},
(-3,-10)*{_4},
(-7,-10)*{_7},
(0,14)*{}="0",
 (0,8)*{\bullet}="a",
(-10,0)*{}="b_1",
(-2,0)*{\bullet}="b_2",
(12,0)*{\bullet}="b_3",
(2,-8)*{}="c_1",
(18,-8)*{\bullet}="n",
(-7,-8)*{}="c_2",
(8,-8)*{}="c_3",
(-3,-8)*{}="c_4",
(11,-16)*{}="d_1",
(17,-16)*{}="d_2",
(23,-16)*{}="d_3",
\ar @{-} "a";"0" <0pt>
\ar @{-} "a";"b_1" <0pt>
\ar @{-} "a";"b_2" <0pt>
\ar @{-} "a";"b_3" <0pt>
\ar @{-} "b_2";"c_1" <0pt>
\ar @{-} "b_2";"c_2" <0pt>
\ar @{-} "b_3";"c_3" <0pt>
\ar @{-} "b_2";"c_4" <0pt>
\ar @{-} "b_3";"n" <0pt>
\ar @{-} "n";"d_1" <0pt>
\ar @{-} "n";"d_2" <0pt>
\ar @{-} "n";"d_3" <0pt>
\endxy
\]
We define the coordinate chart close to $C_t(\R)$ in a three step procedure.
\begin{enumerate}
\item Associate to the tree $t$ a {\em metric tree}, $t_{metric}$ by endowing each internal edge with a bounded non-negative parameter $\epsilon;$
\[
t_{metric}= \quad \xy
(-10.5,-2)*{_1},
(17,-18)*{_3},
(11,-18)*{_5},
(23,-18)*{_8},
(3,-10)*{_6},
(8,-10)*{_2},
(-3,-10)*{_4},
(-7,-10)*{_7},
(2,3)*{_{\epsilon_1}},
(10,5)*{_{\epsilon_2}},
(18,-4)*{_{\epsilon_3}},
(0,14)*{}="0",
 (0,8)*{\bullet}="a",
(-10,0)*{}="b_1",
(-2,0)*{\bullet}="b_2",
(12,0)*{\bullet}="b_3",
(2,-8)*{}="c_1",
(18,-8)*{\bullet}="n",
(-7,-8)*{}="c_2",
(8,-8)*{}="c_3",
(-3,-8)*{}="c_4",
(11,-16)*{}="d_1",
(17,-16)*{}="d_2",
(23,-16)*{}="d_3",
\ar @{-} "a";"0" <0pt>
\ar @{-} "a";"b_1" <0pt>
\ar @{-} "a";"b_2" <0pt>
\ar @{-} "a";"b_3" <0pt>
\ar @{-} "b_2";"c_1" <0pt>
\ar @{-} "b_2";"c_2" <0pt>
\ar @{-} "b_3";"c_3" <0pt>
\ar @{-} "b_2";"c_4" <0pt>
\ar @{-} "b_3";"n" <0pt>
\ar @{-} "n";"d_1" <0pt>
\ar @{-} "n";"d_2" <0pt>
\ar @{-} "n";"d_3" <0pt>
\endxy
\]  with $\epsilon_1,\epsilon_2,\epsilon_3 \in [0,\epsilon).$
\item Pick an $S_n$-equivariant section $\gamma: C_n(\R) \to Conf_n(\R)$, of the natural projection $Conf_n(\R) \to C_n(\R)$ and associate to the image of $\gamma$ a smooth structure. The section could be either of the two description of $C_n(\R)$ we mentioned above; $C^{st}_n(\R)$ or the space of configurations where $x_1=0$ and $x_n=1.$
\item The coordinate chart $U_t$ can now be seen to be isomorphic to the smooth manifold with corners $[0,\epsilon)^{|E(t)|} \times \prod_{v \in vert(t)} C_{|in(v)}(\R).$ The isomorphism is given by the map $\Phi_t,$
$$\Phi_t:[0,\epsilon)^{|E(t)|} \times \prod_{v \in vert(t)} C_{|in(v)|}(\R) {\longrightarrow} U_t.$$
which we describe in the example of our tree $t.$ Coordinatewise it is defined as follows
\[
\begin{array}{ccccccccccc}
 (0,\epsilon)^3 & \hspace{-2mm} \times \hspace{-2mm}& C^{st}_3(\R) & \hspace{-2mm} \times\hspace{-2mm} & C^{st}_3(\R) & \hspace{-2mm} \times \hspace{-2mm} &  C^{st}_2(\R)  &\hspace{-2mm} \times\hspace{-2mm} & C^{st}_3(\R)
& {\longrightarrow} & C_8(\R) \\
 (\epsilon_1,\epsilon_2,\epsilon_3) & \hspace{-2mm} \times \hspace{-2mm} & (x_1, x',x'') & \hspace{-2mm}\times
 \hspace{-2mm}& (x_7, x_4,x_6) &\hspace{-2mm} \times \hspace{-2mm}&
(x_2,x''') &\hspace{-2mm} \times\hspace{-2mm} & (x_5,x_3,x_8) &\longrightarrow & (y_1, y_7, y_4,y_6, y_2,y_5, y_3,y_8)
\end{array}
\] 
according to $$\begin{array}{cccc}
y_1=&x_1 & y_2=&x''+\epsilon_2 x_2 \\
y_3=&x'' +\epsilon_2(x'''+\epsilon_3 x_3) & y_4 =& x'+\epsilon_1 x_4 \\
y_5=&x'' +\epsilon_2(x'''+\epsilon_3 x_5) & y_6 =& x'+\epsilon_1 x_6 \\
y_7=&x'+\epsilon_1 x_4 & y_5=&x'' +\epsilon_2(x'''+\epsilon_3 x_5) 
\end{array}$$
In general the map $\Phi_t$ is given as the recursive $\epsilon$-magnified substitution scheme.  If the coordinates $x_i, \ldots, x_{i+k}$ lie in a corolla controlled by the internal edge associated to the coordinate $x'$ and where the internal edge is parametrized by the factor $\epsilon,$ then the substitution give the new coordinates $x'+\epsilon x_i, \ldots, x'+\epsilon x_{i+k}.$ 
\end{enumerate}
\end{subsection}

\end{subsection}

\begin{subsection}{A compactification of $\widehat{\mathfrak C}_n(\R)$} \label{confmorass}
Define the compactification of $\mathfrak C_n(\R)$ as the closure of the following inclusions
 \[ \xymatrix{
      \mathfrak{C}_n(\R) \ar@{->}[r]^{\prod \pi_A \phantom{asdf}}   & \displaystyle \prod_{|A| \subset [n], |A| \geq 1} \mathfrak C_A(\R) \ar@{->}[r]^{\prod \Phi_A \phantom{a} } &  \displaystyle \prod_{|A| \subset [n], |A| \geq 1} C_A^{st} (\R) \times (0,1) \ar@{^{(}->}[r]^{}   &  \displaystyle \prod_{|A| \subset [n], |A| \geq 1} \widetilde{ C}_A^{st} (\R) \times [0,1]
}  \] 

The codimension one boundary strata of the configurations space $\widehat{\mathfrak{C}}_n(\R)$ is given by \begin{itemize}
\item $$\partial \widehat{\mathfrak{C}}_n(\R) = \bigcup \widehat{\mathfrak{C}}_{n-|A|+1}(\R) \times \overline{C}_{|A|}(\R) \cup \bigcup \overline{C}_k(\R) \times \widehat{\mathfrak{C}}_{|A_1|}(\R) \times \ldots \times \widehat{\mathfrak{C}}_{|A_k|}(\R)$$
 where $A$ is as above and where the $A_i$ are connected disjoint subsets of $[n]$ such that $inf A_1 < \ldots < inf A_k$ and $\cup A_i = [n].$ 
\item The face complex of the disjoint union $$\overline{C}_\bullet(\R) \sqcup \widehat{\mathfrak{C}}_{\bullet}(\R) \sqcup \overline{C}_\bullet(\R)$$ has the natural structure of a dg free operad of transformation type;
$$ \mathcal Free \left \langle \xy
(1,-5)*{\ldots},
(-13,-7)*{_{i_1}},
(-8,-7)*{_{i_2}},
(-3,-7)*{_{i_3}},
(7,-7)*{_{i_{q-1}}},
(13,-7)*{_{i_q}},
 (0,0)*{\bullet}="a",
(0,5)*{}="0",
(-12,-5)*{}="b_1",
(-8,-5)*{}="b_2",
(-3,-5)*{}="b_3",
(8,-5)*{}="b_4",
(12,-5)*{}="b_5",
\ar @{-} "a";"0" <0pt>
\ar @{-} "a";"b_2" <0pt>
\ar @{-} "a";"b_3" <0pt>
\ar @{-} "a";"b_1" <0pt>
\ar @{-} "a";"b_4" <0pt>
\ar @{-} "a";"b_5" <0pt>
\endxy , \xy
(1,-5)*{\ldots},
(-13,-7)*{_{i_1}},
(-8,-7)*{_{i_2}},
(-3,-7)*{_{i_3}},
(7,-7)*{_{i_{n-1}}},
(13,-7)*{_{i_n}},
 (0,0)*{\blacksquare}="a",
(0,5)*{}="0",
(-12,-5)*{}="b_1",
(-8,-5)*{}="b_2",
(-3,-5)*{}="b_3",
(8,-5)*{}="b_4",
(12,-5)*{}="b_5",
\ar @{.} "a";"0" <0pt>
\ar @{-} "a";"b_2" <0pt>
\ar @{-} "a";"b_3" <0pt>
\ar @{-} "a";"b_1" <0pt>
\ar @{-} "a";"b_4" <0pt>
\ar @{-} "a";"b_5" <0pt>
\endxy
,
\xy
(1,-5)*{\ldots},
(-13,-7)*{_{i_1}},
(-8,-7)*{_{i_2}},
(-3,-7)*{_{i_3}},
(7,-7)*{_{i_{n-1}}},
(13,-7)*{_{i_p}},
 (0,0)*{\circ}="a",
(0,5)*{}="0",
(-12,-5)*{}="b_1",
(-8,-5)*{}="b_2",
(-3,-5)*{}="b_3",  
(8,-5)*{}="b_4",
(12,-5)*{}="b_5",
\ar @{.} "a";"0" <0pt>
\ar @{.} "a";"b_2" <0pt>
\ar @{.} "a";"b_3" <0pt>
\ar @{.} "a";"b_1" <0pt>
\ar @{.} "a";"b_4" <0pt>
\ar @{.} "a";"b_5" <0pt>
\endxy  \right \rangle_{p,q \geq 2 \atop n\geq 1}$$
The differential has the following action 
\[
\begin{split}
& \partial \left( \xy
(1,-5)*{\ldots},
(-13,-7)*{_{i_1}},
(-8,-7)*{_{i_2}},
(-3,-7)*{_{i_3}},
(7,-7)*{_{i_{n-1}}},
(13,-7)*{_{i_n}},
 (0,0)*{\blacksquare}="a",
(0,5)*{}="0",
(-12,-5)*{}="b_1",
(-8,-5)*{}="b_2",
(-3,-5)*{}="b_3",
(8,-5)*{}="b_4",
(12,-5)*{}="b_5",
\ar @{.} "a";"0" <0pt>
\ar @{-} "a";"b_2" <0pt>
\ar @{-} "a";"b_3" <0pt>
\ar @{-} "a";"b_1" <0pt>
\ar @{-} "a";"b_4" <0pt>
\ar @{-} "a";"b_5" <0pt>
\endxy \right)=  
\sum_{l=1}^{n-1} \sum_{k=1}^{n-l} \epsilon(l,k) \begin{xy}
<0mm,0mm>*{\blacksquare},
<0mm,0.8mm>*{};<0mm,5mm>*{}**@{.},
<-9mm,-5mm>*{\ldots},
<-9mm,-7mm>*{_{i_1\ \,   \ \ \  \, i_k}},
<13mm,-7mm>*{_{i_{k+l+1}\ \, \ \ \  \, i_n}},
<0mm,-10mm>*{...},
<14mm,-5mm>*{\ldots},
<-0.7mm,-0.3mm>*{};<-13mm,-5mm>*{}**@{-},
<-0.6mm,-0.5mm>*{};<-6mm,-5mm>*{}**@{-},
<0.6mm,-0.3mm>*{};<20mm,-5mm>*{}**@{-},
<0.3mm,-0.5mm>*{};<8mm,-5mm>*{}**@{-},
<0mm,-0.5mm>*{};<0mm,-4.3mm>*{}**@{-},
<0mm,-5mm>*{\bullet};
<-5mm,-10mm>*{}**@{-},
<-2.7mm,-10mm>*{}**@{-},
<2.7mm,-10mm>*{}**@{-},
<5mm,-10mm>*{}**@{-},
<2mm,-12mm>*{_{i_{k+1}}\ \  \ \ _{i_{k+l}}},
\end{xy} 
\\ & + \sum_{k=2}^n \sum_{n=n_1+\ldots+ n_k}
\epsilon(k;n_1,\ldots,n_k)\xy
(-15.5,-7)*{...},
(19,-7)*{...},
(7.5,0)*{\ldots},
(-16,-9)*{_{i_1\ \ \ldots\ \ i_{n_1}}},
(1,-9)*{_{i_{n_1+1}\ldots \ i_{n_1+n_2}}},
(20,-9)*{\ldots\ \ _{i_{n}}},
(-1.8,-7)*{...},
(0,7)*{\circ}="a",
(-14,0)*{\blacksquare}="b_0",
(-4.5,0)*{\blacksquare}="b_2",
(15,0)*{\blacksquare}="b_3",
(0,13)*{}="0",
(1,-7)*{}="c_1",
(-8,-7)*{}="c_2",
(-5,-7)*{}="c_3",
(-22,-7)*{}="d_1",
(-19,-7)*{}="d_2",
(-13,-7)*{}="d_3",
(12,-7)*{}="e_1",
(15,-7)*{}="e_2",
(22,-7)*{}="e_3",
\ar @{.} "a";"0" <0pt>
\ar @{.} "a";"b_0" <0pt>
\ar @{.} "a";"b_2" <0pt>
\ar @{.} "a";"b_3" <0pt>
\ar @{-} "b_2";"c_1" <0pt>
\ar @{-} "b_2";"c_2" <0pt>
\ar @{-} "b_2";"c_3" <0pt>
\ar @{-} "b_0";"d_1" <0pt>
\ar @{-} "b_0";"d_2" <0pt>
\ar @{-} "b_0";"d_3" <0pt>
\ar @{-} "b_3";"e_1" <0pt>
\ar @{-} "b_3";"e_2" <0pt>
\ar @{-} "b_3";"e_3" <0pt>
\endxy
\end{split}
\]
Where $\epsilon(k,l)=(-1)^{k+l+l(n-k)+1}$ and $$\epsilon(k;n_1,\ldots,n_k)=(-1)^{(k-1)(n_1-1)+(k-2)(n_2-1)+ \ldots + 2(n_{k-2}-1)+n_{k-1}-1}$$ 
On the corollas corresponding to the $A_\infty$ structure, \[\xy
(1,-5)*{\ldots},
(-13,-7)*{_{i_1}},
(-8,-7)*{_{i_2}},
(-3,-7)*{_{i_3}},
(7,-7)*{_{i_{q-1}}},
(13,-7)*{_{i_q}},
 (0,0)*{\bullet}="a",
(0,5)*{}="0",
(-12,-5)*{}="b_1",
(-8,-5)*{}="b_2",
(-3,-5)*{}="b_3",
(8,-5)*{}="b_4",
(12,-5)*{}="b_5",
\ar @{-} "a";"0" <0pt>
\ar @{-} "a";"b_2" <0pt>
\ar @{-} "a";"b_3" <0pt>
\ar @{-} "a";"b_1" <0pt>
\ar @{-} "a";"b_4" <0pt>
\ar @{-} "a";"b_5" <0pt>
\endxy \text{ and }
\xy
(1,-5)*{\ldots},
(-13,-7)*{_{i_1}},
(-8,-7)*{_{i_2}},
(-3,-7)*{_{i_3}},
(7,-7)*{_{i_{n-1}}},
(13,-7)*{_{i_p}},
 (0,0)*{\circ}="a",
(0,5)*{}="0",
(-12,-5)*{}="b_1",
(-8,-5)*{}="b_2",
(-3,-5)*{}="b_3",  
(8,-5)*{}="b_4",
(12,-5)*{}="b_5",
\ar @{.} "a";"0" <0pt>
\ar @{.} "a";"b_2" <0pt>
\ar @{.} "a";"b_3" <0pt>
\ar @{.} "a";"b_1" <0pt>
\ar @{.} "a";"b_4" <0pt>
\ar @{.} "a";"b_5" <0pt>
\endxy,\]
the differential acts precisely like in the case $\overline{C}_n(\R).$
Representations of this operad are given by a three pieces of data: two $A_\infty$ algebras, $A$ and $A',$ and a morphism of $A_\infty$ algebras $A \to A'.$ Thus this is the previously discussed operad $\mathcal Mor(As)_\infty.$
\end{itemize}
\end{subsection}
\begin{subsection} {The space $\widehat{\mathfrak{C}}_n(\R)$ as a smooth manifold with corners}
We generalize the procedure for $\overline{C}_n(\R)$ to $\widehat{\mathfrak{C}}_n(\R).$ For every tree $t \in \mathcal Mor ( \mathcal A_\infty)$ we define the sets $vert_{\bullet,\circ}(t)$ and $vert_{\blacksquare}(t)$ as the vertices of $t$ marked by $\{\bullet,\circ\}$ or $\blacksquare$. For the tree $t$ we define $\mathfrak C_t(\R)$ as a product; $$\mathfrak C_t(\R) := \prod_{v \in vert_{\bullet,\circ} (t)} C_{|in(v)|}(\R)  \times \prod_{v \in vert_\blacksquare(t)} \mathfrak C_{|in(v)|} (\R).$$
We can describe the space $\widehat{\mathfrak{C}}_n(\R)$ as a stratified union of spaces; $$\widehat{\mathfrak{C}}_n(\R) = \prod_{t \in \mathcal Mor(As)_\infty(n)} \mathfrak C_t(\R).$$
We shall define a coordinate chart $U_t$ around every boundary stratum $\mathfrak C_t(\R)$ with a metric tree. We associate to $t$ the metric tree $t_{metric}$ with for
\begin{enumerate}
\item  every internal edge of the type $ \xy (0,3)*{\blacksquare}="u", (0,-3)*{\bullet}="n", \ar @{-} "u";"n" <0pt> \endxy $ a small positive parameter $\epsilon;$
\item  every vertex of a dashed corolla associate a large positive number $\tau,$ 
\[\xy (0,0)*{\circ}="o", (2,1)*{_\tau}, (0,5)*{}="u", (-10,-5)*{}="l_1", (-5,-5)*{}="l_2", (1,-4)*{\ldots}="l_3", (10,-5)*{}="l_4" \ar @{.}, "o";"u" <0pt> \ar @{.} "o";"l_1" <0pt> \ar @{.} "o";"l_2" <0pt> \ar @{.} "o";"l_4" <0pt>,  \endxy\]
\item  every subgraph of $t_{metric}$ of the type $ \xy (0,3)*{\circ}="u", (0,-3)*{\circ}="n",(2,4)*{\tau_1}, (2,-2)*{\tau_2}, \ar @{.} "u";"n"<0pt> \endxy$ an inequality $\tau_1 > \tau_2.$
\end{enumerate}
\begin{example} As an example we consider a specific tree. The general method should be clear from this description. Let $t$ be the following tree 
\[
\xy
(-15,-10)*{_1},
(-11,-18)*{_3},
(-2,-18)*{_5},
(2,-18)*{_6},
(11,-18)*{_2},
(14,-10)*{_4},
(18,-18)*{_7},
(25.7,-18)*{_8},
(0,15)*{}="0",
 (0,10)*{\circ}="a",
(-10,0)*{\blacksquare}="b_1",
(-2,0)*{\circ}="b_2",
(12,0)*{\blacksquare}="b_3",
(-15,-8)*{}="c_0",
(5,-8)*{\blacksquare}="c_1",
(-7,-8)*{\blacksquare}="c_2",
(10,-16)*{}="c_3",
(14,-8)*{}="c_4",
(20,-8)*{\bullet}="c_5",
(-11,-16)*{}="d_1",
(-3,-16)*{}="d_2",
(2,-16)*{}="d_3",
(18,-16)*{}="d_4",
(25,-16)*{}="d_5",
\ar @{.} "a";"0" <0pt>
\ar @{.} "a";"b_1" <0pt>
\ar @{.} "a";"b_2" <0pt>
\ar @{.} "a";"b_3" <0pt>
\ar @{-} "b_1";"c_0" <0pt>
\ar @{.} "b_2";"c_1" <0pt>
\ar @{.} "b_2";"c_2" <0pt>
\ar @{-} "c_1";"c_3" <0pt>
\ar @{-} "b_3";"c_4" <0pt>
\ar @{-} "b_3";"c_5" <0pt>
\ar @{-} "c_2";"d_1" <0pt>
\ar @{-} "c_2";"d_2" <0pt>
\ar @{-} "c_1";"d_3" <0pt>
\ar @{-} "c_5";"d_4" <0pt>
\ar @{-} "c_5";"d_5" <0pt>
\endxy
\]
Then the associated metric tree, $t_{metric},$ is given by 
\[
\xy
(-15,-10)*{_1},
(-11,-18)*{_3},
(-2,-18)*{_5},
(2,-18)*{_6},
(11,-18)*{_2},
(14,-10)*{_4},
(18,-18)*{_7},
(25.7,-18)*{_8},
(0,15)*{}="0",
 (0,10)*{\circ}="a",
(2,11)*{_{\tau_1}},
(-10,0)*{\blacksquare}="b_1",
(-2,0)*{\circ}="b_2",
(0,1)*{_{\tau_2}},
(12,0)*{\blacksquare}="b_3",
(-15,-8)*{}="c_0",
(5,-8)*{\blacksquare}="c_1",
(-7,-8)*{\blacksquare}="c_2",
(10,-16)*{}="c_3",
(14,-8)*{}="c_4",
(20,-8)*{\bullet}="c_5",
(18,-4)*{_\epsilon},
(-11,-16)*{}="d_1",
(-3,-16)*{}="d_2",
(2,-16)*{}="d_3",
(18,-16)*{}="d_4",
(25,-16)*{}="d_5",
\ar @{.} "a";"0" <0pt>
\ar @{.} "a";"b_1" <0pt>
\ar @{.} "a";"b_2" <0pt>
\ar @{.} "a";"b_3" <0pt>
\ar @{-} "b_1";"c_0" <0pt>
\ar @{.} "b_2";"c_1" <0pt>
\ar @{.} "b_2";"c_2" <0pt>
\ar @{-} "c_1";"c_3" <0pt>
\ar @{-} "b_3";"c_4" <0pt>
\ar @{-} "b_3";"c_5" <0pt>
\ar @{-} "c_2";"d_1" <0pt>
\ar @{-} "c_2";"d_2" <0pt>
\ar @{-} "c_1";"d_3" <0pt>
\ar @{-} "c_5";"d_4" <0pt>
\ar @{-} "c_5";"d_5" <0pt>
\endxy
\]
\end{example}
Choose an equivariant section, $s:\mathfrak C_n(\R) \to Conf_n(\R)$ to the projection $Conf_n(\R) \to \mathfrak C_n(\R)$ and a smooth structure on the image of $s.$ Define $\mathfrak C_n^{st}(\R) := s(\mathfrak C_n(\R)),$ which is called the space of configurations in standard position. One possible choice of $\mathfrak C_n^{st}(\R)$ is subspace of points in $Conf_n(\R)$ where $\sum x_i =0.$  

The coordinate chart $U_t \subset \widehat{\mathfrak C}_n(\R)$ is now defined to be isomorphic to the manifold with corners,
$$(l,+\infty]^{|vert_{\circ}|(t)} \times [0,s)^{|edge_{\bullet}^{\blacksquare}(t)|} \times \prod_{v \in vert_{\circ,\bullet}(t)} C_{|in(v)|}^{st} (\R) \times \prod_{v \in vert_{\blacksquare}(t)} \mathfrak C_{|in(t)|}^{st}(\R)$$
where $vert_{\circ}$ denotes the set of vertices of type $\circ$, $vert_{\circ,\bullet}$ denotes the set of vertices of type $\circ$ or $\bullet$ and $edge_{\bullet}^{\blacksquare}$ denote the set of edges of type  $ \xy (0,3)*{\blacksquare}="u", (0,-3)*{\bullet}="n", \ar @{-} "u";"n" <0pt> \endxy.$ The isomorphism $\Phi_t$ between the coordinate chart $U_t$ and the product above is read from metric tree. We map is given in coordinates, for the specific tree in the above example, as follows
\[
\begin{array}{cccccccccccccccccccc}
(l,+\infty]^2 & \times & [0,s) & \times & C_{3}^{st} (\R) &\times &C_2^{st}(\R) &\times &C_2^{st}(\R)  \\ (\tau_1,\tau_2) & \times & \epsilon & \times & (x',x'',x''') & \times & (t',t'') & \times & (x_7,x_8) \\&\\& \times &  \mathfrak C_{1}^{st}(\R) &\times &\mathfrak C_{2}^{st}(\R)&\times &\mathfrak C_{2}^{st}(\R) &\times &\mathfrak C_{2}^{st}(\R)& \\ & \times & x_1 & \times & (x_3,x_5) & \times & (x_6,x_2) & \times & (x_4,u) \\ &&&&&&&&& \longrightarrow & \mathfrak C_8(\R) \\ &&&&& &&&&& (y_1, \ldots, y_8)
\end{array}
\]

such that 
\[
\begin{array}{cccc}
y_1=&\tau_1 x'+x_1 & y_2 =& \tau_1 x''+\tau_2 t'' + x_2 \\
y_3= &\tau_1 x''+\tau_2 t' + x_3  & y_4 =& \tau_1 x'''+x_4 \\
y_5=&\tau_1 x''+\tau_2 t' + x_5 & y_6 = &\tau_1 x''+\tau_2 t'' + x_6 \\
y_7=&\tau_1 x'''+u + \epsilon x_7&  y_8 =& \tau_1 x'''+u + \epsilon x_8   
\end{array}
\]

The boundary strata in $U_t$ are given by allowing formally $\tau_1=\infty, \tau_2 = \infty$ such that $\tau_1/\tau_2 =0$ and $\epsilon =0.$
\end{subsection}
\end{section}

\bigskip

\begin{section}{The operad $\mathcal Ho(As)_\infty$}
\begin{subsection}{Compactification of the Configuration Space $Conf_\bullet (\mathbb{R})$}
In this section we introduce our main result. We define the new compactification of the configuration space $Conf_n(\R)$ as the closure of the following injections
\[ \xymatrix{
      Conf_n(\R) \ar@{->}[r]^{\prod \pi_A \phantom{asdf}}   & \displaystyle \prod_{|A| \subset [n], |A| \geq 1}  Conf_A(\R) \ar@{->}[d]^{\prod \Phi_A \phantom{a} } &  \\ &  \displaystyle \prod_{|A| \subset [n], |A| \geq 1} C_A^{st} (\R) \times (0,1) \times (-1,1) \ar@{^{(}->}[r]^{}   &  \displaystyle \prod_{|A| \subset [n], |A| \geq 1} \widetilde{ C}_A^{st} (\R) \times [0,1] \times [-1,1] 
} 
 \]
\end{subsection}
We extend the previous result to the whole of $\overline{Conf}_n(\R).$
The codimension one boundary strata of $\overline{Conf}_n(\R)$ are given as 
\[
\begin{split}
& \partial \overline{Conf}_n(\R) =  \bigcup \overline{Conf}_{n-|A|+1}(\R) \times \overline{C}_{|A|} (\R) \cup \widehat{\mathfrak{C}}_{n}(\R) \cup \widehat{\mathfrak{C}}_{n}(\R) 
\\& \bigcup \overline{C}_{k+1+l}(\R) \times \widehat{\mathfrak{C}}_{|A_1|}(\R) \times \ldots \times \widehat{\mathfrak{C}}_{|A_k|}(\R) \times \overline{Conf}_{|A|}(\R) \times \widehat{\mathfrak{C}}_{|B_1|}(\R) \times \ldots \times \widehat{\mathfrak{C}}_{|B_l|}(\R) 
\end{split}
\]
\begin{enumerate}
\item The first union runs over all connected subsets $A \subset [n]$ such that $|A|>1.$ The stratum correspond to the collapsing of the points of $A$ into one point.
\item The stratum $\widehat{\mathfrak{C}}_{n}(\R)$ appears when either all points go to plus or minus infinity but in such a manner that the distance between the points is finite.
\item The second union runs over all partitions of $[n]$ into connected non-empty subsets $[n]=A_1\cup \ldots \cup A_k \cup F \cup B_1 \cup \ldots \cup B_l$ with $|F|>0.$ These limit points correspond to when the points from $A_1, \ldots, A_k$ go to $-\infty,$ the points from $F$ stay in a finite position and the points from $B_1,\ldots,B_l$ go to $\infty.$ The points do this such that each point in $A_i$ and $B_j$ remain a finite distance from each other; $||p_{A_i}||,||p_{B_j}||<\infty.$ 
\end{enumerate}
 By methods described in \cite{Me} we can consider the fundamental chains of $\{\overline{C}_{\bullet}(\R) \sqcup \widehat{\mathfrak{C}}_{\bullet}(\R) \sqcup \overline{Conf}_{\bullet} (\R) \sqcup \widehat{\mathfrak{C}}_{\bullet}(\R) \sqcup \overline{C}_{\bullet}(\R) \}$ as a dg free operad with two colors.
We identify the faces with corollas;
$$
\overline{C}_q(\R) \simeq
\xy
(1,-5)*{\ldots},
(-13,-7)*{_{i_1}},
(-8,-7)*{_{i_2}},
(-3,-7)*{_{i_3}},
(7,-7)*{_{i_{q-1}}},
(13,-7)*{_{i_q}},
 (0,0)*{\bullet}="a",
(0,5)*{}="0",
(-12,-5)*{}="b_1",
(-8,-5)*{}="b_2",
(-3,-5)*{}="b_3",
(8,-5)*{}="b_4",
(12,-5)*{}="b_5",
\ar @{-} "a";"0" <0pt>
\ar @{-} "a";"b_2" <0pt>
\ar @{-} "a";"b_3" <0pt>
\ar @{-} "a";"b_1" <0pt>
\ar @{-} "a";"b_4" <0pt>
\ar @{-} "a";"b_5" <0pt>
\endxy.
$$
We need to illustrate two versions of this space as it appears either as collapsing or as controlling points at infinity. We distinguish between them by the color of their internal vertex and legs; drawn black or white/dashed.
$$
\overline{C}_p(\R) \simeq
\xy
(1,-5)*{\ldots},
(-13,-7)*{_{i_1}},
(-8,-7)*{_{i_2}},
(-3,-7)*{_{i_3}},
(7,-7)*{_{i_{p-1}}},
(13,-7)*{_{i_p}},
 (0,0)*{\circ}="a",
(0,5)*{}="0",
(-12,-5)*{}="b_1",
(-8,-5)*{}="b_2",
(-3,-5)*{}="b_3",
(8,-5)*{}="b_4",
(12,-5)*{}="b_5",
\ar @{.} "a";"0" <0pt>
\ar @{.} "a";"b_2" <0pt>
\ar @{.} "a";"b_3" <0pt>
\ar @{.} "a";"b_1" <0pt>
\ar @{.} "a";"b_4" <0pt>
\ar @{.} "a";"b_5" <0pt>
\endxy
$$
Points going to plus or minus infinity in a cluster are given a two-colored corolla: 
$$-\infty:
\widehat{\mathfrak{C}}_{n}(\R)\simeq
\xy
(1,-5)*{\ldots},
(-13,-7)*{_{i_1}},
(-8,-7)*{_{i_2}},
(-3,-7)*{_{i_3}},
(7,-7)*{_{i_{n-1}}},
(13,-7)*{_{i_n}},
 (0,0)*{\blacktriangleleft}="a",
(0,5)*{}="0",
(-12,-5)*{}="b_1",
(-8,-5)*{}="b_2",
(-3,-5)*{}="b_3",
(8,-5)*{}="b_4",
(12,-5)*{}="b_5",
\ar @{.} "a";"0" <0pt>
\ar @{-} "a";"b_2" <0pt>
\ar @{-} "a";"b_3" <0pt>
\ar @{-} "a";"b_1" <0pt>
\ar @{-} "a";"b_4" <0pt>
\ar @{-} "a";"b_5" <0pt>
\endxy
$$
$$\infty:
\widehat{\mathfrak{C}}_{n}(\R)\simeq
\xy
(1,-5)*{\ldots},
(-13,-7)*{_{i_1}},
(-8,-7)*{_{i_2}},
(-3,-7)*{_{i_3}},
(7,-7)*{_{i_{n-1}}},
(13,-7)*{_{i_n}},
 (0,0)*{\blacktriangleright}="a",
(0,5)*{}="0",
(-12,-5)*{}="b_1",
(-8,-5)*{}="b_2",
(-3,-5)*{}="b_3",  
(8,-5)*{}="b_4",
(12,-5)*{}="b_5",
\ar @{.} "a";"0" <0pt>
\ar @{-} "a";"b_2" <0pt>
\ar @{-} "a";"b_3" <0pt>
\ar @{-} "a";"b_1" <0pt>
\ar @{-} "a";"b_4" <0pt>
\ar @{-} "a";"b_5" <0pt>
\endxy
$$
We represent points staying finite with a two-colored corolla as follows: 
$$
\overline{Conf}_{n}(\R)\simeq
\xy
(1,-5)*{\ldots},
(-13,-7)*{_{i_1}},
(-8,-7)*{_{i_2}},
(-3,-7)*{_{i_3}},
(7,-7)*{_{i_{n-1}}},
(13,-7)*{_{i_n}},
 (0,0)*{\blacktriangledown}="a",
(0,5)*{}="0",
(-12,-5)*{}="b_1",
(-8,-5)*{}="b_2",
(-3,-5)*{}="b_3",
(8,-5)*{}="b_4",
(12,-5)*{}="b_5",
\ar @{.} "a";"0" <0pt>
\ar @{-} "a";"b_2" <0pt>
\ar @{-} "a";"b_3" <0pt>
\ar @{-} "a";"b_1" <0pt>
\ar @{-} "a";"b_4" <0pt>
\ar @{-} "a";"b_5" <0pt>
\endxy
$$
In this graphical notation the differential has the following action:
\[
\begin{split}
& \partial \xy
(1,-5)*{\ldots},
(-13,-7)*{_{i_1}},
(-8,-7)*{_{i_2}},
(-3,-7)*{_{i_3}},
(7,-7)*{_{i_{n-1}}},
(13,-7)*{_{i_n}},
 (0,0)*{\blacktriangledown}="a",
(0,5)*{}="0",
(-12,-5)*{}="b_1",
(-8,-5)*{}="b_2",
(-3,-5)*{}="b_3",
(8,-5)*{}="b_4",
(12,-5)*{}="b_5",
\ar @{.} "a";"0" <0pt>
\ar @{-} "a";"b_2" <0pt>
\ar @{-} "a";"b_3" <0pt>
\ar @{-} "a";"b_1" <0pt>
\ar @{-} "a";"b_4" <0pt>
\ar @{-} "a";"b_5" <0pt>
\endxy = \pm \xy
(1,-5)*{\ldots},
(-13,-7)*{_{i_1}},
(-8,-7)*{_{i_2}},
(-3,-7)*{_{i_3}},
(7,-7)*{_{i_{n-1}}},
(13,-7)*{_{i_n}},
 (0,0)*{\blacktriangleright}="a",
(0,5)*{}="0",
(-12,-5)*{}="b_1",
(-8,-5)*{}="b_2",
(-3,-5)*{}="b_3",
(8,-5)*{}="b_4",
(12,-5)*{}="b_5",
\ar @{.} "a";"0" <0pt>
\ar @{-} "a";"b_2" <0pt>
\ar @{-} "a";"b_3" <0pt>
\ar @{-} "a";"b_1" <0pt>
\ar @{-} "a";"b_4" <0pt>
\ar @{-} "a";"b_5" <0pt>
\endxy
\pm \xy
(1,-5)*{\ldots},
(-13,-7)*{_{i_1}},
(-8,-7)*{_{i_2}},
(-3,-7)*{_{i_3}},
(7,-7)*{_{i_{n-1}}},
(13,-7)*{_{i_n}},
 (0,0)*{\blacktriangleleft}="a",
(0,5)*{}="0",
(-12,-5)*{}="b_1",
(-8,-5)*{}="b_2",
(-3,-5)*{}="b_3",
(8,-5)*{}="b_4",
(12,-5)*{}="b_5",
\ar @{.} "a";"0" <0pt>
\ar @{-} "a";"b_2" <0pt>
\ar @{-} "a";"b_3" <0pt>
\ar @{-} "a";"b_1" <0pt>
\ar @{-} "a";"b_4" <0pt>
\ar @{-} "a";"b_5" <0pt>
\endxy
+ 
\sum \pm \begin{xy}
<0mm,0mm>*{\blacktriangledown},
<0mm,0.8mm>*{};<0mm,5mm>*{}**@{.},
<-9mm,-5mm>*{\ldots},
<-9mm,-7mm>*{_{i_1\ \,   \ \ \  \, i_k}},
<13mm,-7mm>*{_{i_{k+l+1}\ \, \ \ \  \, i_n}},
<0mm,-10mm>*{...},
<14mm,-5mm>*{\ldots},
<-0.7mm,-0.3mm>*{};<-13mm,-5mm>*{}**@{-},
<-0.6mm,-0.5mm>*{};<-6mm,-5mm>*{}**@{-},
<0.6mm,-0.3mm>*{};<20mm,-5mm>*{}**@{-},
<0.3mm,-0.5mm>*{};<8mm,-5mm>*{}**@{-},
<0mm,-0.5mm>*{};<0mm,-4.3mm>*{}**@{-},
<0mm,-5mm>*{\bullet};
<-5mm,-10mm>*{}**@{-},
<-2.7mm,-10mm>*{}**@{-},
<2.7mm,-10mm>*{}**@{-},
<5mm,-10mm>*{}**@{-},
<2mm,-12mm>*{_{i_{k+1}}\ \  \ \ _{i_{k+l}}},
\end{xy} 
\\ & \sum \pm
\begin{xy}
(-8,-1)*{...},
(68,-1)*{...},
(-19,-7)*{...},
(6,-7)*{...},
(31,-7)*{...},
(55,-7)*{...},
(85,-7)*{...},
(-20,-9)*{_{i_1\ \ \ldots  \ i_{n_1}}},
(5,-9)*{_{i_{n_{k-1}+1}\ \ \ldots  \ i_{n_k}}},
(30,-9)*{_{i_{n_k+1} \ \ldots  \ i_{n_k+s}}},
(58,-9)*{_{i_{n_k+s+1} \ldots   i_{n_{k}+s+m_1}}},
(89,-9)*{_{i_{n_k+s+m_{l-1}+1} \ldots   i_{n_k+s+_ml}}},
(30,7)*{\circ}="a",
(-20,0)*{\blacktriangleleft}="b_0",
(10,0)*{\blacktriangleleft}="b_3",
(30,0)*{\blacktriangledown}="b_4",
(50,0)*{\blacktriangleright}="b_5",
(80,0)*{\blacktriangleright}="b_7",
(30,13)*{}="0",
(-10,-7)*{}="c_1",
(0,-7)*{}="c_3",
(-7,-7)*{}="c_2",
(-25,-7)*{}="d_1",
(-22,-7)*{}="d_3",
(-15,-7)*{}="d_2",
(0,-7)*{}="e_1",
(3,-7)*{}="e_3",
(10,-7)*{}="e_2",
(25,-7)*{}="f_1",
(28,-7)*{}="f_3",
(35,-7)*{}="f_2",
(50,-7)*{}="g_1",
(53,-7)*{}="g_2",
(60,-7)*{}="g_3",
(60,-7)*{}="h_1",
(63,-7)*{}="h_2",
(70,-7)*{}="h_2",
(80,-7)*{}="i_1",
(83,-7)*{}="i_2",
(90,-7)*{}="i_3",
\ar @{.} "a";"0" <0pt>
\ar @{.} "a";"b_0" <0pt>
\ar @{.} "a";"b_3" <0pt>
\ar @{.} "a";"b_4" <0pt>
\ar @{.} "a";"b_5" <0pt>
\ar @{.} "a";"b_7" <0pt>
\ar @{-} "b_0";"d_1" <0pt>
\ar @{-} "b_0";"d_2" <0pt>
\ar @{-} "b_0";"d_3" <0pt>
\ar @{-} "b_3";"e_1" <0pt>
\ar @{-} "b_3";"e_2" <0pt>
\ar @{-} "b_3";"e_3" <0pt>
\ar @{-} "b_4";"f_1" <0pt>
\ar @{-} "b_4";"f_2" <0pt>
\ar @{-} "b_4";"f_3" <0pt>
\ar @{-} "b_5";"g_1" <0pt>
\ar @{-} "b_5";"g_2" <0pt>
\ar @{-} "b_5";"g_3" <0pt>
\ar @{-} "b_7";"i_1" <0pt>
\ar @{-} "b_7";"i_3" <0pt>
\ar @{-} "b_7";"i_2" <0pt>
\end{xy}
\end{split}
\]
On the corollas corresponding to $\overline{C}_p(\R)$ and $\widehat{\mathfrak{C}}_p(\R)$ the differential acts identically to the differential in the $\mathcal Mor(As)_\infty$ operad.

\begin{example}
We look at the simple example $Conf_2(\R):$
$$  Conf_2(R) \hookrightarrow \widetilde{C_{(1)}}^{st} (\R) \times [0,1] \times [-1,1]\times\widetilde{C_{(2)}}^{st} (\R) \times [0,1] \times [-1,1] \times \widetilde{C_{(12)}}^{st} (\R) \times [0,1] \times [-1,1]  $$
The codimension one boundary strata are given in five different ways. 
\begin{enumerate}
\item $p=(x_1,x_2) \to (-\infty,-\infty)$ in such a way that $||p||=\lambda$ remains constant. This can be achieved by $x_1=r+\sqrt 2 \lambda$ and $x_2=r,$ in which case we get $x_c(p)=r+\frac{\lambda}{\sqrt 2}$ and $||p||=\lambda.$ If we now let $r \to -\infty$ we get the desired boundary. Clearly these kind of points are scaling-invariant so we can identify these limit points with a copy of $\widehat {\mathfrak C}_2(\R)$
$$
\xymatrix{ 
\xy
(-4,-7)*{_{x_1}},
(4,-7)*{_{x_2}},
 (0,0)*{\blacktriangledown}="a",
(0,5)*{}="0",
(-4,-5)*{}="b_1",
(4,-5)*{}="b_2",
\ar @{.} "a";"0" <0pt>
\ar @{-} "a";"b_1" <0pt>
\ar @{-} "a";"b_2" <0pt>
\endxy & \ar@{->}[rr]_{\begin{array}{c} _{(x_1,x_2) \longrightarrow (-\infty,-\infty)} \\ _{||p|| \longrightarrow \lambda} \end{array}} &  &  & 
\xy
(-4,-7)*{_{x_1}},
(4,-7)*{_{x_2}},
 (0,0)*{\blacktriangleleft}="a",
(0,5)*{}="0",
(-4,-5)*{}="b_1",
(4,-5)*{}="b_2",
\ar @{.} "a";"0" <0pt>
\ar @{-} "a";"b_1" <0pt>
\ar @{-} "a";"b_2" <0pt>
\endxy
}
$$
\item Analogous to above we can consider the case when $p=(x_1,x_2) \to (\infty,\infty)$ with a fixed distance. These limit points can also be identified with a copy of $\widehat{\mathfrak C}_2(\R).$
$$
\xymatrix{ 
\xy
(-4,-7)*{_{x_1}},
(4,-7)*{_{x_2}},
 (0,0)*{\blacktriangledown}="a",
(0,5)*{}="0",
(-4,-5)*{}="b_1",
(4,-5)*{}="b_2",
\ar @{.} "a";"0" <0pt>
\ar @{-} "a";"b_1" <0pt>
\ar @{-} "a";"b_2" <0pt>
\endxy & \ar@{->}[rr]_{\begin{array}{c} _{(x_1,x_2) \longrightarrow (\infty,\infty)} \\ _{||p|| \longrightarrow \lambda} \end{array}} &  &  & 
\xy
(-4,-7)*{_{x_1}},
(4,-7)*{_{x_2}},
 (0,0)*{\blacktriangleright}="a",
(0,5)*{}="0",
(-4,-5)*{}="b_1",
(4,-5)*{}="b_2",
\ar @{.} "a";"0" <0pt>
\ar @{-} "a";"b_1" <0pt>
\ar @{-} "a";"b_2" <0pt>
\endxy
}
$$
\item $p=(x_1,x_2) \to (-\infty,a);$ limit points of this type can be identified with a copy of $\overline{C}_2(\R) \times \mathfrak{C}_1(\R) \times \overline{Conf}_1(\R).$ 
$$
\xymatrix{ 
\xy
(-4,-7)*{_{x_1}},
(4,-7)*{_{x_2}},
 (0,0)*{\blacktriangledown}="a",
(0,5)*{}="0",
(-4,-5)*{}="b_1",
(4,-5)*{}="b_2",
\ar @{.} "a";"0" <0pt>
\ar @{-} "a";"b_1" <0pt>
\ar @{-} "a";"b_2" <0pt>
\endxy & \ar@{->}[rr]_{\begin{array}{c} _{(x_1,x_2) \longrightarrow (-\infty,a)} \end{array}} &  &  & 
\xy
(-4,-12)*{_{x_1}},
(4,-12)*{_{x_2}},
 (0,0)*{\circ}="a",
(0,5)*{}="0",
(-4,-5)*{\blacktriangleleft}="b_1",
(4,-5)*{\blacktriangledown}="b_2",
(-4,-10)*{}="b_3",
(4,-10)*{}="b_4",
\ar @{.} "a";"0" <0pt>
\ar @{.} "a";"b_1" <0pt>
\ar @{.} "a";"b_2" <0pt>
\ar @{-} "b_1";"b_3" <0pt>
\ar @{-} "b_2";"b_4" <0pt>
\endxy
}
$$
\item $p=(x_1,x_2) \to (a,\infty);$  which also can be identified with a copy of $\overline{C}_2(\R) \times \overline{Conf}_1(\R) \times \mathfrak{C}_1(\R).$ 
$$
\xymatrix{ 
\xy
(-4,-7)*{_{x_1}},
(4,-7)*{_{x_2}},
 (0,0)*{\blacktriangledown}="a",
(0,5)*{}="0",
(-4,-5)*{}="b_1",
(4,-5)*{}="b_2",
\ar @{.} "a";"0" <0pt>
\ar @{-} "a";"b_1" <0pt>
\ar @{-} "a";"b_2" <0pt>
\endxy & \ar@{->}[rr]_{\begin{array}{c} _{(x_1,x_2) \longrightarrow (a,\infty)} \end{array}} &  &  & 
\xy
(-4,-12)*{_{x_1}},
(4,-12)*{_{x_2}},
 (0,0)*{\circ}="a",
(0,5)*{}="0",
(-4,-5)*{\blacktriangledown}="b_1",
(4,-5)*{\blacktriangleright}="b_2",
(-4,-10)*{}="b_3",
(4,-10)*{}="b_4",
\ar @{.} "a";"0" <0pt>
\ar @{.} "a";"b_1" <0pt>
\ar @{.} "a";"b_2" <0pt>
\ar @{-} "b_1";"b_3" <0pt>
\ar @{-} "b_2";"b_4" <0pt>
\endxy
}
$$
\item $p=(x_1,x_2) \to (a,a);$ which can be identified with a copy of $\overline{Conf}_1(\R) \times \overline{C}_2(\R).$
$$
\xymatrix{ 
\xy
(-4,-7)*{_{x_1}},
(4,-7)*{_{x_2}},
 (0,0)*{\blacktriangledown}="a",
(0,5)*{}="0",
(-4,-5)*{}="b_1",
(4,-5)*{}="b_2",
\ar @{.} "a";"0" <0pt>
\ar @{-} "a";"b_1" <0pt>
\ar @{-} "a";"b_2" <0pt>
\endxy & \ar@{->}[rr]_{\begin{array}{c} _{(x_1,x_2) \longrightarrow (a,a)} \\ _{||p|| \longrightarrow 0} \end{array}} &  &  & 
\xy
(-4,-12)*{_{x_1}},
(4,-12)*{_{x_2}},
 (0,0)*{\blacktriangledown}="a",
(0,5)*{}="0",
(0,-5)*{\bullet}="b_1",
(-4,-10)*{}="b_3",
(4,-10)*{}="b_4",
\ar @{.} "a";"0" <0pt>
\ar @{-} "a";"b_1" <0pt>
\ar @{-} "b_1";"b_3" <0pt>
\ar @{-} "b_1";"b_4" <0pt>
\endxy
}
$$
\end{enumerate}
We can summarize this in the following formula: 
\[
 \partial \xy
(-4,-7)*{_{x_1}},
(4,-7)*{_{x_2}},
 (0,0)*{\blacktriangledown}="a",
(0,5)*{}="0",
(-4,-5)*{}="b_1",
(4,-5)*{}="b_2",
\ar @{.} "a";"0" <0pt>
\ar @{-} "a";"b_1" <0pt>
\ar @{-} "a";"b_2" <0pt>
\endxy =
-\xy
(-4,-7)*{_{x_1}},
(4,-7)*{_{x_2}},
 (0,0)*{\blacktriangleleft}="a",
(0,5)*{}="0",
(-4,-5)*{}="b_1",
(4,-5)*{}="b_2",
\ar @{.} "a";"0" <0pt>
\ar @{-} "a";"b_1" <0pt>
\ar @{-} "a";"b_2" <0pt>
\endxy 
+ 
\xy
(-4,-7)*{_{x_1}},
(4,-7)*{_{x_2}},
 (0,0)*{\blacktriangleright}="a",
(0,5)*{}="0",
(-4,-5)*{}="b_1",
(4,-5)*{}="b_2",
\ar @{.} "a";"0" <0pt>
\ar @{-} "a";"b_1" <0pt>
\ar @{-} "a";"b_2" <0pt>
\endxy 
- 
\xy
(-4,-12)*{_{x_1}},
(4,-12)*{_{x_2}},
 (0,0)*{\circ}="a",
(0,5)*{}="0",
(-4,-5)*{\blacktriangleleft}="b_1",
(4,-5)*{\blacktriangledown}="b_2",
(-4,-10)*{}="b_3",
(4,-10)*{}="b_4",
\ar @{.} "a";"0" <0pt>
\ar @{.} "a";"b_1" <0pt>
\ar @{.} "a";"b_2" <0pt>
\ar @{-} "b_1";"b_3" <0pt>
\ar @{-} "b_2";"b_4" <0pt>
\endxy
 -
\xy
(-4,-12)*{_{x_1}},
(4,-12)*{_{x_2}},
 (0,0)*{\circ}="a",
(0,5)*{}="0",
(-4,-5)*{\blacktriangledown}="b_1",
(4,-5)*{\blacktriangleright}="b_2",
(-4,-10)*{}="b_3",
(4,-10)*{}="b_4",
\ar @{.} "a";"0" <0pt>
\ar @{.} "a";"b_1" <0pt>
\ar @{.} "a";"b_2" <0pt>
\ar @{-} "b_1";"b_3" <0pt>
\ar @{-} "b_2";"b_4" <0pt>
\endxy
+
\xy
(-4,-12)*{_{x_1}},
(4,-12)*{_{x_2}},
 (0,0)*{\blacktriangledown}="a",
(0,5)*{}="0",
(0,-5)*{\bullet}="b_1",
(-4,-10)*{}="b_3",
(4,-10)*{}="b_4",
\ar @{.} "a";"0" <0pt>
\ar @{-} "a";"b_1" <0pt>
\ar @{-} "b_1";"b_3" <0pt>
\ar @{-} "b_1";"b_4" <0pt>
\endxy
\]
\end{example}

\begin{example}
To further convince the reader we proceed with the example $\overline{Conf}_3(\R):$
$$ Conf_3(\R) \hookrightarrow  \displaystyle \prod_{|A| \subset [3], |A| \geq 1} \widetilde{ C}_A^{st} (\R) \times [0,1] \times [-1,1]  $$
The codimension one strata are given in twelve different ways:
\begin{enumerate}
\item $p=(x_1,x_2,x_3) \to (-\infty,-\infty,-\infty)$ in such a way that $||p_{12}|| \to \lambda_1 $ and $||p_{23}|| \to \lambda_2 $. This  can be achieved by $x_1=r-\sqrt{2} \lambda_1,$ $x_2=r,$ $x_3=r+\sqrt 2 \lambda_2$ and then letting $r \to -\infty.$ These limit points are scaling-invariant so we can identify them with a copy of $\widehat{\mathfrak C}_3(\R)$
$$
\xymatrix{ 
\xy
(-7,-7)*{_{x_1}},
(0,-7)*{_{x_2}},
(7,-7)*{_{x_3}},
 (0,0)*{\blacktriangledown}="a",
(0,5)*{}="0",
(-7,-5)*{}="b_1",
(0,-5)*{}="b_2",
(7,-5)*{}="b_3",
\ar @{.} "a";"0" <0pt>
\ar @{-} "a";"b_1" <0pt>
\ar @{-} "a";"b_2" <0pt>
\ar @{-} "a";"b_3" <0pt>
\endxy & \ar@{->}[rr]_{\begin{array}{c} _{(x_1,x_2,x_3) \longrightarrow (-\infty,-\infty,-\infty)} \\ _{||p_{12}|| \longrightarrow \lambda_1} \\ _{||p_{23}|| \longrightarrow \lambda_2 }\end{array}} &  &  & \xy
(-7,-7)*{_{x_1}},
(0,-7)*{_{x_2}},
(7,-7)*{_{x_3}},
 (0,0)*{\blacktriangleleft}="a",
(0,5)*{}="0",
(-7,-5)*{}="b_1",
(0,-5)*{}="b_2",
(7,-5)*{}="b_3",
\ar @{.} "a";"0" <0pt>
\ar @{-} "a";"b_1" <0pt>
\ar @{-} "a";"b_2" <0pt>
\ar @{-} "a";"b_3" <0pt>
\endxy
}
$$
\item Analogous to above we can consider $p=(x_1,x_2,x_3) \to (\infty,\infty,\infty)$ in such a way that $||p_{12}||=\lambda_1$ and $||p_{23}||=\lambda_2$ remains constant. This boundary strata can also be identified with a copy of $\widehat{\mathfrak C}_3(\R).$
$$
\xymatrix{ 
\xy
(-7,-7)*{_{x_1}},
(0,-7)*{_{x_2}},
(7,-7)*{_{x_3}},
 (0,0)*{\blacktriangledown}="a",
(0,5)*{}="0",
(-7,-5)*{}="b_1",
(0,-5)*{}="b_2",
(7,-5)*{}="b_3",
\ar @{.} "a";"0" <0pt>
\ar @{-} "a";"b_1" <0pt>
\ar @{-} "a";"b_2" <0pt>
\ar @{-} "a";"b_3" <0pt>
\endxy & \ar@{->}[rr]_{\begin{array}{c} _{(x_1,x_2,x_3) \longrightarrow (\infty,\infty,\infty)} \\ _{||p_{12}|| \longrightarrow \lambda_1} \\ _{||p_{23}|| \longrightarrow \lambda_2}\end{array}} &  &  & \xy
(-7,-7)*{_{x_1}},
(0,-7)*{_{x_2}},
(7,-7)*{_{x_3}},
 (0,0)*{\blacktriangleright}="a",
(0,5)*{}="0",
(-7,-5)*{}="b_1",
(0,-5)*{}="b_2",
(7,-5)*{}="b_3",
\ar @{.} "a";"0" <0pt>
\ar @{-} "a";"b_1" <0pt>
\ar @{-} "a";"b_2" <0pt>
\ar @{-} "a";"b_3" <0pt>
\endxy
}
$$
\item  $p=(x_1,x_2,x_3) \to (-\infty,-\infty,a)$ in such a way that  $||p_{12}||$ diverges. Points of this type can be identified with $\overline{C}_3(\R) \times \widehat{\mathfrak C}_1(\R) \times \widehat{\mathfrak C}_1(\R) \times \overline{Conf}_1(\R).$
$$
\xymatrix{ 
\xy
(-7,-7)*{_{x_1}},
(0,-7)*{_{x_2}},
(7,-7)*{_{x_3}},
 (0,0)*{\blacktriangledown}="a",
(0,5)*{}="0",
(-7,-5)*{}="b_1",
(0,-5)*{}="b_2",
(7,-5)*{}="b_3",
\ar @{.} "a";"0" <0pt>
\ar @{-} "a";"b_1" <0pt>
\ar @{-} "a";"b_2" <0pt>
\ar @{-} "a";"b_3" <0pt>
\endxy & \ar@{->}[rr]_{\begin{array}{c} _{(x_1,x_2,x_3) \longrightarrow (-\infty,-\infty,a)} \\ _{||p_{12}|| \longrightarrow \infty} \\ \end{array}} &  &  & \xy
(-7,-12)*{_{x_1}},
(0,-12)*{_{x_2}},
(7,-12)*{_{x_3}},
 (0,0)*{\circ}="a",
(0,5)*{}="0",
(-7,-5)*{\blacktriangleleft}="b_1",
(0,-5)*{\blacktriangleleft}="b_2",
(7,-5)*{\blacktriangledown}="b_3",
(-7,-5)*{}="b_1",
(0,-5)*{}="b_2",
(7,-5)*{}="b_3",
(-7,-10)*{}="b_4",
(0,-10)*{}="b_5",
(7,-10)*{}="b_6",
\ar @{.} "a";"0" <0pt>
\ar @{.} "a";"b_1" <0pt>
\ar @{.} "a";"b_2" <0pt>
\ar @{.} "a";"b_3" <0pt>
\ar @{-} "b_1";"b_4" <0pt>
\ar @{-} "b_2";"b_5" <0pt>
\ar @{-} "b_3";"b_6" <0pt>
\endxy
}
$$
\item  $p=(x_1,x_2,x_3) \to (a,\infty,\infty)$ in such a way that  $||p_{23}||$ diverges. Points of this type can be identified with $\overline{C}_3(\R) \times \overline{Conf}_1(\R) \times \widehat{\mathfrak C}_1(\R) \times \widehat{\mathfrak C}_1(\R).$
$$
\xymatrix{ 
\xy
(-7,-7)*{_{x_1}},
(0,-7)*{_{x_2}},
(7,-7)*{_{x_3}},
 (0,0)*{\blacktriangledown}="a",
(0,5)*{}="0",
(-7,-5)*{}="b_1",
(0,-5)*{}="b_2",
(7,-5)*{}="b_3",
\ar @{.} "a";"0" <0pt>
\ar @{-} "a";"b_1" <0pt>
\ar @{-} "a";"b_2" <0pt>
\ar @{-} "a";"b_3" <0pt>
\endxy & \ar@{->}[rr]_{\begin{array}{c} _{(x_1,x_2,x_3) \longrightarrow (a,\infty,\infty)} \\ _{||p_{23}|| \longrightarrow \infty} \\ \end{array}} &  &  & \xy
(-7,-12)*{_{x_1}},
(0,-12)*{_{x_2}},
(7,-12)*{_{x_3}},
 (0,0)*{\circ}="a",
(0,5)*{}="0",
(-7,-5)*{\blacktriangledown}="b_1",
(0,-5)*{\blacktriangleright}="b_2",
(7,-5)*{\blacktriangleright}="b_3",
(-7,-5)*{}="b_1",
(0,-5)*{}="b_2",
(7,-5)*{}="b_3",
(-7,-10)*{}="b_4",
(0,-10)*{}="b_5",
(7,-10)*{}="b_6",
\ar @{.} "a";"0" <0pt>
\ar @{.} "a";"b_1" <0pt>
\ar @{.} "a";"b_2" <0pt>
\ar @{.} "a";"b_3" <0pt>
\ar @{-} "b_1";"b_4" <0pt>
\ar @{-} "b_2";"b_5" <0pt>
\ar @{-} "b_3";"b_6" <0pt>
\endxy
}
$$
\item  $p=(x_1,x_2,x_3) \to (-\infty,-\infty,a)$ in such a way that  $||p_{12}||$ converges. Points of this type can be identified with $\overline{C}_2(\R) \times \widehat{\mathfrak C}_2(\R) \times \overline{Conf}_1(\R).$
$$
\xymatrix{ 
\xy
(-7,-7)*{_{x_1}},
(0,-7)*{_{x_2}},
(7,-7)*{_{x_3}},
 (0,0)*{\blacktriangledown}="a",
(0,5)*{}="0",
(-7,-5)*{}="b_1",
(0,-5)*{}="b_2",
(7,-5)*{}="b_3",
\ar @{.} "a";"0" <0pt>
\ar @{-} "a";"b_1" <0pt>
\ar @{-} "a";"b_2" <0pt>
\ar @{-} "a";"b_3" <0pt>
\endxy & \ar@{->}[rr]_{\begin{array}{c} _{(x_1,x_2,x_3) \longrightarrow (-\infty,-\infty,a)} \\ _{||p_{12}|| \longrightarrow \lambda} \end{array}} &  &  & \xy
(-7,-12)*{_{x_1}},
(0,-12)*{_{x_2}},
(7,-12)*{_{x_3}},
 (0,0)*{\circ}="a",
(0,5)*{}="0",
(-3,-5)*{\blacktriangleleft}="b_2",
(7,-5)*{\blacktriangledown}="b_3",
(-3,-5)*{}="b_2",
(7,-5)*{}="b_3",
(-7,-10)*{}="b_4",
(0,-10)*{}="b_5",
(7,-10)*{}="b_6",
\ar @{.} "a";"0" <0pt>
\ar @{.} "a";"b_2" <0pt>
\ar @{.} "a";"b_3" <0pt>
\ar @{-} "b_2";"b_4" <0pt>
\ar @{-} "b_2";"b_5" <0pt>
\ar @{-} "b_3";"b_6" <0pt>
\endxy
}
$$
\item $p=(x_1,x_2,x_3) \to (-\infty,a,b)$ where $a<b;$ which can be identified with $\overline{C}_3(\R) \times \widehat{\mathfrak C}_1(\R) \times \overline{Conf}_2(\R)$
$$
\xymatrix{ 
\xy
(-7,-7)*{_{x_1}},
(0,-7)*{_{x_2}},
(7,-7)*{_{x_3}},
 (0,0)*{\blacktriangledown}="a",
(0,5)*{}="0",
(-7,-5)*{}="b_1",
(0,-5)*{}="b_2",
(7,-5)*{}="b_3",
\ar @{.} "a";"0" <0pt>
\ar @{-} "a";"b_1" <0pt>
\ar @{-} "a";"b_2" <0pt>
\ar @{-} "a";"b_3" <0pt>
\endxy & \ar@{->}[rr]_{\begin{array}{c} _{(x_1,x_2,x_3) \longrightarrow (-\infty,a,b)} \end{array}} &  &  & 
\xy
(-7,-12)*{_{x_1}},
(0,-12)*{_{x_2}},
(7,-12)*{_{x_3}},
 (0,0)*{\circ}="a",
(0,5)*{}="0",
(-7,-5)*{\blacktriangleleft}="b_1",
(3,-5)*{\blacktriangledown}="b_2",
(-7,-5)*{}="b_1",
(3,-5)*{}="b_2",
(-7,-10)*{}="b_4",
(0,-10)*{}="b_5",
(7,-10)*{}="b_6",
\ar @{.} "a";"0" <0pt>
\ar @{.} "a";"b_1" <0pt>
\ar @{.} "a";"b_2" <0pt>
\ar @{-} "b_1";"b_4" <0pt>
\ar @{-} "b_2";"b_5" <0pt>
\ar @{-} "b_2";"b_6" <0pt>
\endxy
}
$$
\item $p=(x_1,x_2,x_3) \to (-\infty,a,\infty).$ These points can be identified with $\overline{C}_3(\R) \times \widehat{\mathfrak C}_1(\R) \times \overline{Conf}_1(\R)  \times \widehat{\mathfrak C}_1(\R)$  
$$
\xymatrix{ 
\xy
(-7,-7)*{_{x_1}},
(0,-7)*{_{x_2}},
(7,-7)*{_{x_3}},
 (0,0)*{\blacktriangledown}="a",
(0,5)*{}="0",
(-7,-5)*{}="b_1",
(0,-5)*{}="b_2",
(7,-5)*{}="b_3",
\ar @{.} "a";"0" <0pt>
\ar @{-} "a";"b_1" <0pt>
\ar @{-} "a";"b_2" <0pt>
\ar @{-} "a";"b_3" <0pt>
\endxy & \ar@{->}[rr]_{\begin{array}{c} _{(x_1,x_2,x_3) \longrightarrow (-\infty,a,\infty)} \end{array}} &  &  & 
\xy
(-7,-12)*{_{x_1}},
(0,-12)*{_{x_2}},
(7,-12)*{_{x_3}},
 (0,0)*{\circ}="a",
(0,5)*{}="0",
(-7,-5)*{\blacktriangleleft}="b_1",
(0,-5)*{\blacktriangledown}="b_2",
(7,-5)*{\blacktriangleright}="b_3",
(-7,-5)*{}="b_1",
(0,-5)*{}="b_2",
(7,-5)*{}="b_3",
(-7,-10)*{}="b_4",
(0,-10)*{}="b_5",
(7,-10)*{}="b_6",
\ar @{.} "a";"0" <0pt>
\ar @{.} "a";"b_1" <0pt>
\ar @{.} "a";"b_2" <0pt>
\ar @{.} "a";"b_3" <0pt>
\ar @{-} "b_1";"b_4" <0pt>
\ar @{-} "b_2";"b_5" <0pt>
\ar @{-} "b_3";"b_6" <0pt>
\endxy
}
$$
\item $p=(x_1,x_2,x_3) \to (a,b,\infty),$ where $a<b.$ Points of this type can be identified with $\overline{C}_2(\R)  \times \overline{Conf}_2(\R) \times \widehat{\mathfrak C}_1(\R).$
$$
\xymatrix{ 
\xy
(-7,-7)*{_{x_1}},
(0,-7)*{_{x_2}},
(7,-7)*{_{x_3}},
 (0,0)*{\blacktriangledown}="a",
(0,5)*{}="0",
(-7,-5)*{}="b_1",
(0,-5)*{}="b_2",
(7,-5)*{}="b_3",
\ar @{.} "a";"0" <0pt>
\ar @{-} "a";"b_1" <0pt>
\ar @{-} "a";"b_2" <0pt>
\ar @{-} "a";"b_3" <0pt>
\endxy & \ar@{->}[rr]_{\begin{array}{c} _{(x_1,x_2,x_3) \longrightarrow (a,b,\infty)} \end{array}} &  &  & 
\xy
(-7,-12)*{_{x_1}},
(0,-12)*{_{x_2}},
(7,-12)*{_{x_3}},
 (0,0)*{\circ}="a",
(0,5)*{}="0",
(-3,-5)*{\blacktriangledown}="b_2",
(7,-5)*{\blacktriangleright}="b_3",
(-3,-5)*{}="b_2",
(7,-5)*{}="b_3",
(-7,-10)*{}="b_4",
(0,-10)*{}="b_5",
(7,-10)*{}="b_6",
\ar @{.} "a";"0" <0pt>
\ar @{.} "a";"b_2" <0pt>
\ar @{.} "a";"b_3" <0pt>
\ar @{-} "b_2";"b_4" <0pt>
\ar @{-} "b_2";"b_5" <0pt>
\ar @{-} "b_3";"b_6" <0pt>
\endxy
}
$$
\item $p=(x_1,x_2,x_3) \to (a,\infty,\infty)$ in such a way that $||p_{23}||$ converges. This boundary strata can be identified with $\overline{C}_2(\R)  \times \overline{Conf}_2(\R) \times \widehat{\mathfrak C}_1(\R)$
$$
\xymatrix{ 
\xy
(-7,-7)*{_{x_1}},
(0,-7)*{_{x_2}},
(7,-7)*{_{x_3}},
 (0,0)*{\blacktriangledown}="a",
(0,5)*{}="0",
(-7,-5)*{}="b_1",
(0,-5)*{}="b_2",
(7,-5)*{}="b_3",
\ar @{.} "a";"0" <0pt>
\ar @{-} "a";"b_1" <0pt>
\ar @{-} "a";"b_2" <0pt>
\ar @{-} "a";"b_3" <0pt>
\endxy & \ar@{->}[rr]_{\begin{array}{c} _{(x_1,x_2,x_3) \longrightarrow (a,\infty,\infty)} \\ _{||p_{23}|| \longrightarrow \lambda} \end{array}} &  &  & \xy
(-7,-12)*{_{x_1}},
(0,-12)*{_{x_2}},
(7,-12)*{_{x_3}},
 (0,0)*{\circ}="a",
(0,5)*{}="0",
(-7,-5)*{\blacktriangledown}="b_1",
(3,-5)*{\blacktriangleright}="b_2",
(-7,-5)*{}="b_1",
(3,-5)*{}="b_2",
(-7,-10)*{}="b_4",
(0,-10)*{}="b_5",
(7,-10)*{}="b_6",
\ar @{.} "a";"0" <0pt>
\ar @{.} "a";"b_1" <0pt>
\ar @{.} "a";"b_2" <0pt>
\ar @{-} "b_1";"b_4" <0pt>
\ar @{-} "b_2";"b_5" <0pt>
\ar @{-} "b_2";"b_6" <0pt>
\endxy
}
$$
\item $p=(x_1,x_2,x_3) \to (a,a,b),$ where $a<b.$ Points of this type can be identified with $ \overline{Conf}_2(\R) \times \overline{C}_2(\R).$
$$
\xymatrix{ 
\xy
(-7,-7)*{_{x_1}},
(0,-7)*{_{x_2}},
(7,-7)*{_{x_3}},
 (0,0)*{\blacktriangledown}="a",
(0,5)*{}="0",
(-7,-5)*{}="b_1",
(0,-5)*{}="b_2",
(7,-5)*{}="b_3",
\ar @{.} "a";"0" <0pt>
\ar @{-} "a";"b_1" <0pt>
\ar @{-} "a";"b_2" <0pt>
\ar @{-} "a";"b_3" <0pt>
\endxy & \ar@{->}[rr]_{\begin{array}{c} _{(x_1,x_2,x_3) \longrightarrow (a,a,b)} \\ _{||p_{12}|| \longrightarrow 0}\end{array}} &  &  & \xy
(-7,-12)*{_{x_1}},
(0,-12)*{_{x_2}},
(4,-7)*{_{x_3}},
 (0,0)*{\blacktriangledown}="a",
(0,5)*{}="0",
(-3,-5)*{\bullet}="b_2",
(-3,-5)*{}="b_2",
(7,-5)*{}="b_3",
(-7,-10)*{}="b_4",
(0,-10)*{}="b_5",
(4,-5)*{}="b_6",
\ar @{.} "a";"0" <0pt>
\ar @{-} "a";"b_2" <0pt>
\ar @{-} "a";"b_6" <0pt>
\ar @{-} "b_2";"b_4" <0pt>
\ar @{-} "b_2";"b_5" <0pt>
\endxy
}
$$
\item $p=(x_1,x_2,x_3) \to (a,b,b),$ where $a<b.$ This boundary strata can be identified with $ \overline{Conf}_2(\R) \times \overline{C}_2(\R)$
$$
\xymatrix{ 
\xy
(-7,-7)*{_{x_1}},
(0,-7)*{_{x_2}},
(7,-7)*{_{x_3}},
 (0,0)*{\blacktriangledown}="a",
(0,5)*{}="0",
(-7,-5)*{}="b_1",
(0,-5)*{}="b_2",
(7,-5)*{}="b_3",
\ar @{.} "a";"0" <0pt>
\ar @{-} "a";"b_1" <0pt>
\ar @{-} "a";"b_2" <0pt>
\ar @{-} "a";"b_3" <0pt>
\endxy & \ar@{->}[rr]_{\begin{array}{c} _{(x_1,x_2,x_3) \longrightarrow (a,b,b)} \\ _{||p_{23}|| \longrightarrow 0} \end{array}} &  &  & \xy
(-5,-7)*{_{x_1}},
(0,-12)*{_{x_2}},
(7,-12)*{_{x_3}},
 (0,0)*{\blacktriangledown}="a",
(0,5)*{}="0",
(3,-5)*{\bullet}="b_2",
(-7,-5)*{}="b_1",
(3,-5)*{}="b_2",
(-4,-5)*{}="b_4",
(0,-10)*{}="b_5",
(7,-10)*{}="b_6",
\ar @{.} "a";"0" <0pt>
\ar @{-} "a";"b_4" <0pt>
\ar @{-} "a";"b_2" <0pt>
\ar @{-} "b_2";"b_5" <0pt>
\ar @{-} "b_2";"b_6" <0pt>
\endxy
}
$$
\item $p=(x_1,x_2,x_3) \to (a,a,a).$ This boundary strata can be identified with $ \overline{Conf}_1(\R) \times \overline{C}_3(\R)$
$$
\xymatrix{ 
\xy
(-7,-7)*{_{x_1}},
(0,-7)*{_{x_2}},
(7,-7)*{_{x_3}},
 (0,0)*{\blacktriangledown}="a",
(0,5)*{}="0",
(-7,-5)*{}="b_1",
(0,-5)*{}="b_2",
(7,-5)*{}="b_3",
\ar @{.} "a";"0" <0pt>
\ar @{-} "a";"b_1" <0pt>
\ar @{-} "a";"b_2" <0pt>
\ar @{-} "a";"b_3" <0pt>
\endxy & \ar@{->}[rr]_{\begin{array}{c} _{(x_1,x_2,x_3) \longrightarrow (a,a,a)} \\ _{||p|| \longrightarrow 0} \end{array}} &  &  & \xy
(-7,-12)*{_{x_1}},
(0,-12)*{_{x_2}},
(7,-12)*{_{x_3}},
 (0,0)*{\blacktriangledown}="a",
(0,5)*{}="0",
(0,-5)*{\bullet}="b_2",
(0,-5)*{}="b_2",
(-7,-10)*{}="b_4",
(0,-10)*{}="b_5",
(7,-10)*{}="b_6",
\ar @{.} "a";"0" <0pt>
\ar @{-} "a";"b_2" <0pt>
\ar @{-} "b_2";"b_4" <0pt>
\ar @{-} "b_2";"b_5" <0pt>
\ar @{-} "b_2";"b_6" <0pt>
\endxy
}
$$
\end{enumerate}
We summarize this in the formula: \[
\begin{split}
\partial \xy
(-7,-7)*{_{x_1}},
(0,-7)*{_{x_2}},
(7,-7)*{_{x_3}},
 (0,0)*{\blacktriangledown}="a",
(0,5)*{}="0",
(-7,-5)*{}="b_1",
(0,-5)*{}="b_2",
(7,-5)*{}="b_3",
\ar @{.} "a";"0" <0pt>
\ar @{-} "a";"b_1" <0pt>
\ar @{-} "a";"b_2" <0pt>
\ar @{-} "a";"b_3" <0pt>
\endxy
 & =
-
\xy
(-7,-7)*{_{x_1}},
(0,-7)*{_{x_2}},
(7,-7)*{_{x_3}},
 (0,0)*{\blacktriangleleft}="a",
(0,5)*{}="0",
(-7,-5)*{}="b_1",
(0,-5)*{}="b_2",
(7,-5)*{}="b_3",
\ar @{.} "a";"0" <0pt>
\ar @{-} "a";"b_1" <0pt>
\ar @{-} "a";"b_2" <0pt>
\ar @{-} "a";"b_3" <0pt>
\endxy
+
\xy
(-7,-7)*{_{x_1}},
(0,-7)*{_{x_2}},
(7,-7)*{_{x_3}},
 (0,0)*{\blacktriangleright}="a",
(0,5)*{}="0",
(-7,-5)*{}="b_1",
(0,-5)*{}="b_2",
(7,-5)*{}="b_3",
\ar @{.} "a";"0" <0pt>
\ar @{-} "a";"b_1" <0pt>
\ar @{-} "a";"b_2" <0pt>
\ar @{-} "a";"b_3" <0pt>
\endxy
 + \xy
(-7,-12)*{_{x_1}},
(0,-12)*{_{x_2}},
(7,-12)*{_{x_3}},
 (0,0)*{\circ}="a",
(0,5)*{}="0",
(-7,-5)*{\blacktriangleleft}="b_1",
(0,-5)*{\blacktriangleleft}="b_2",
(7,-5)*{\blacktriangledown}="b_3",
(-7,-5)*{}="b_1",
(0,-5)*{}="b_2",
(7,-5)*{}="b_3",
(-7,-10)*{}="b_4",
(0,-10)*{}="b_5",
(7,-10)*{}="b_6",
\ar @{.} "a";"0" <0pt>
\ar @{.} "a";"b_1" <0pt>
\ar @{.} "a";"b_2" <0pt>
\ar @{.} "a";"b_3" <0pt>
\ar @{-} "b_1";"b_4" <0pt>
\ar @{-} "b_2";"b_5" <0pt>
\ar @{-} "b_3";"b_6" <0pt>
\endxy
+
\xy
(-7,-12)*{_{x_1}},
(0,-12)*{_{x_2}},
(7,-12)*{_{x_3}},
 (0,0)*{\circ}="a",
(0,5)*{}="0",
(-7,-5)*{\blacktriangledown}="b_1",
(0,-5)*{\blacktriangleright}="b_2",
(7,-5)*{\blacktriangleright}="b_3",
(-7,-5)*{}="b_1",
(0,-5)*{}="b_2",
(7,-5)*{}="b_3",
(-7,-10)*{}="b_4",
(0,-10)*{}="b_5",
(7,-10)*{}="b_6",
\ar @{.} "a";"0" <0pt>
\ar @{.} "a";"b_1" <0pt>
\ar @{.} "a";"b_2" <0pt>
\ar @{.} "a";"b_3" <0pt>
\ar @{-} "b_1";"b_4" <0pt>
\ar @{-} "b_2";"b_5" <0pt>
\ar @{-} "b_3";"b_6" <0pt>
\endxy \\ &
+
\xy
(-7,-12)*{_{x_1}},
(0,-12)*{_{x_2}},
(7,-12)*{_{x_3}},
 (0,0)*{\circ}="a",
(0,5)*{}="0",
(-3,-5)*{\blacktriangleleft}="b_2",
(7,-5)*{\blacktriangledown}="b_3",
(-3,-5)*{}="b_2",
(7,-5)*{}="b_3",
(-7,-10)*{}="b_4",
(0,-10)*{}="b_5",
(7,-10)*{}="b_6",
\ar @{.} "a";"0" <0pt>
\ar @{.} "a";"b_2" <0pt>
\ar @{.} "a";"b_3" <0pt>
\ar @{-} "b_2";"b_4" <0pt>
\ar @{-} "b_2";"b_5" <0pt>
\ar @{-} "b_3";"b_6" <0pt>
\endxy
+
\xy
(-7,-12)*{_{x_1}},
(0,-12)*{_{x_2}},
(7,-12)*{_{x_3}},
 (0,0)*{\circ}="a",
(0,5)*{}="0",
(-7,-5)*{\blacktriangleleft}="b_1",
(3,-5)*{\blacktriangledown}="b_2",
(-7,-5)*{}="b_1",
(3,-5)*{}="b_2",
(-7,-10)*{}="b_4",
(0,-10)*{}="b_5",
(7,-10)*{}="b_6",
\ar @{.} "a";"0" <0pt>
\ar @{.} "a";"b_1" <0pt>
\ar @{.} "a";"b_2" <0pt>
\ar @{-} "b_1";"b_4" <0pt>
\ar @{-} "b_2";"b_5" <0pt>
\ar @{-} "b_2";"b_6" <0pt>
\endxy
+
\xy
(-7,-12)*{_{x_1}},
(0,-12)*{_{x_2}},
(7,-12)*{_{x_3}},
 (0,0)*{\circ}="a",
(0,5)*{}="0",
(-7,-5)*{\blacktriangleleft}="b_1",
(0,-5)*{\blacktriangledown}="b_2",
(7,-5)*{\blacktriangleright}="b_3",
(-7,-5)*{}="b_1",
(0,-5)*{}="b_2",
(7,-5)*{}="b_3",
(-7,-10)*{}="b_4",
(0,-10)*{}="b_5",
(7,-10)*{}="b_6",
\ar @{.} "a";"0" <0pt>
\ar @{.} "a";"b_1" <0pt>
\ar @{.} "a";"b_2" <0pt>
\ar @{.} "a";"b_3" <0pt>
\ar @{-} "b_1";"b_4" <0pt>
\ar @{-} "b_2";"b_5" <0pt>
\ar @{-} "b_3";"b_6" <0pt>
\endxy
-\xy
(-7,-12)*{_{x_1}},
(0,-12)*{_{x_2}},
(7,-12)*{_{x_3}},
 (0,0)*{\circ}="a",
(0,5)*{}="0",
(-3,-5)*{\blacktriangledown}="b_2",
(7,-5)*{\blacktriangleright}="b_3",
(-3,-5)*{}="b_2",
(7,-5)*{}="b_3",
(-7,-10)*{}="b_4",
(0,-10)*{}="b_5",
(7,-10)*{}="b_6",
\ar @{.} "a";"0" <0pt>
\ar @{.} "a";"b_2" <0pt>
\ar @{.} "a";"b_3" <0pt>
\ar @{-} "b_2";"b_4" <0pt>
\ar @{-} "b_2";"b_5" <0pt>
\ar @{-} "b_3";"b_6" <0pt>
\endxy
\\ & + \xy
(-7,-12)*{_{x_1}},
(0,-12)*{_{x_2}},
(7,-12)*{_{x_3}},
 (0,0)*{\circ}="a",
(0,5)*{}="0",
(-7,-5)*{\blacktriangledown}="b_1",
(3,-5)*{\blacktriangleright}="b_2",
(-7,-5)*{}="b_1",
(3,-5)*{}="b_2",
(-7,-10)*{}="b_4",
(0,-10)*{}="b_5",
(7,-10)*{}="b_6",
\ar @{.} "a";"0" <0pt>
\ar @{.} "a";"b_1" <0pt>
\ar @{.} "a";"b_2" <0pt>
\ar @{-} "b_1";"b_4" <0pt>
\ar @{-} "b_2";"b_5" <0pt>
\ar @{-} "b_2";"b_6" <0pt>
\endxy
-\xy
(-7,-12)*{_{x_1}},
(0,-12)*{_{x_2}},
(4,-7)*{_{x_3}},
 (0,0)*{\blacktriangledown}="a",
(0,5)*{}="0",
(-3,-5)*{\bullet}="b_2",
(-3,-5)*{}="b_2",
(7,-5)*{}="b_3",
(-7,-10)*{}="b_4",
(0,-10)*{}="b_5",
(4,-5)*{}="b_6",
\ar @{.} "a";"0" <0pt>
\ar @{-} "a";"b_2" <0pt>
\ar @{-} "a";"b_6" <0pt>
\ar @{-} "b_2";"b_4" <0pt>
\ar @{-} "b_2";"b_5" <0pt>
\endxy
+
\xy
(-5,-7)*{_{x_1}},
(0,-12)*{_{x_2}},
(7,-12)*{_{x_3}},
 (0,0)*{\blacktriangledown}="a",
(0,5)*{}="0",
(3,-5)*{\bullet}="b_2",
(-7,-5)*{}="b_1",
(3,-5)*{}="b_2",
(-4,-5)*{}="b_4",
(0,-10)*{}="b_5",
(7,-10)*{}="b_6",
\ar @{.} "a";"0" <0pt>
\ar @{-} "a";"b_4" <0pt>
\ar @{-} "a";"b_2" <0pt>
\ar @{-} "b_2";"b_5" <0pt>
\ar @{-} "b_2";"b_6" <0pt>
\endxy
+\xy
(-7,-12)*{_{x_1}},
(0,-12)*{_{x_2}},
(7,-12)*{_{x_3}},
 (0,0)*{\blacktriangledown}="a",
(0,5)*{}="0",
(0,-5)*{\bullet}="b_2",
(0,-5)*{}="b_2",
(-7,-10)*{}="b_4",
(0,-10)*{}="b_5",
(7,-10)*{}="b_6",
\ar @{.} "a";"0" <0pt>
\ar @{-} "a";"b_2" <0pt>
\ar @{-} "b_2";"b_4" <0pt>
\ar @{-} "b_2";"b_5" <0pt>
\ar @{-} "b_2";"b_6" <0pt>
\endxy
\end{split}
\]
\end{example}
We summarize the result in our main theorem
\begin{theorem} \label{hoass}
The face complex on the disjoint union $\overline{C}_\bullet(\R) \sqcup \widehat{\mathfrak{C}}_{\bullet}(\R) \sqcup \overline{Conf}_\bullet (\R) \sqcup \widehat{\mathfrak{C}}_{\bullet}(\R) \sqcup \overline{C}_\bullet(\R)$ is naturally a dg free operad of transformation type 
\[ \begin{split}
\mathcal Ho(As)_\infty := 
\mathcal Free  \left \langle 
\vphantom{\xy
(1,-5)*{\ldots},
(-13,-7)*{_{i_1}},
(-8,-7)*{_{i_2}},
(-3,-7)*{_{i_3}},
(7,-7)*{_{i_{q-1}}},
(13,-7)*{_{i_q}},
 (0,0)*{\bullet}="a",
(0,5)*{}="0",
(-12,-5)*{}="b_1",
(-8,-5)*{}="b_2",
(-3,-5)*{}="b_3",
(8,-5)*{}="b_4",
(12,-5)*{}="b_5",
\ar @{-} "a";"0" <0pt>
\ar @{-} "a";"b_2" <0pt>
\ar @{-} "a";"b_3" <0pt>
\ar @{-} "a";"b_1" <0pt>
\ar @{-} "a";"b_4" <0pt>
\ar @{-} "a";"b_5" <0pt>
\endxy} \right.  & 
\xy
(1,-5)*{\ldots},
(-13,-7)*{_{i_1}},
(-8,-7)*{_{i_2}},
(-3,-7)*{_{i_3}},
(7,-7)*{_{i_{q-1}}},
(13,-7)*{_{i_q}},
 (0,0)*{\bullet}="a",
(0,5)*{}="0",
(-12,-5)*{}="b_1",
(-8,-5)*{}="b_2",
(-3,-5)*{}="b_3",
(8,-5)*{}="b_4",
(12,-5)*{}="b_5",
\ar @{-} "a";"0" <0pt>
\ar @{-} "a";"b_2" <0pt>
\ar @{-} "a";"b_3" <0pt>
\ar @{-} "a";"b_1" <0pt>
\ar @{-} "a";"b_4" <0pt>
\ar @{-} "a";"b_5" <0pt>
\endxy ,
\xy
(1,-5)*{\ldots},
(-13,-7)*{_{i_1}},
(-8,-7)*{_{i_2}},
(-3,-7)*{_{i_3}},
(7,-7)*{_{i_{k-1}}},
(13,-7)*{_{i_k}},
 (0,0)*{\blacktriangleleft}="a",
(0,5)*{}="0",
(-12,-5)*{}="b_1",
(-8,-5)*{}="b_2",
(-3,-5)*{}="b_3",  
(8,-5)*{}="b_4",
(12,-5)*{}="b_5",
\ar @{.} "a";"0" <0pt>
\ar @{-} "a";"b_2" <0pt>
\ar @{-} "a";"b_3" <0pt>
\ar @{-} "a";"b_1" <0pt>
\ar @{-} "a";"b_4" <0pt>
\ar @{-} "a";"b_5" <0pt>
\endxy
,
\xy
(1,-5)*{\ldots},
(-13,-7)*{_{i_1}},
(-8,-7)*{_{i_2}},
(-3,-7)*{_{i_3}},
(7,-7)*{_{i_{m-1}}},
(13,-7)*{_{i_m}},
 (0,0)*{\blacktriangledown}="a",
(0,5)*{}="0",
(-12,-5)*{}="b_1",
(-8,-5)*{}="b_2",
(-3,-5)*{}="b_3",
(8,-5)*{}="b_4",
(12,-5)*{}="b_5",
\ar @{.} "a";"0" <0pt>
\ar @{-} "a";"b_2" <0pt>
\ar @{-} "a";"b_3" <0pt>
\ar @{-} "a";"b_1" <0pt>
\ar @{-} "a";"b_4" <0pt>
\ar @{-} "a";"b_5" <0pt>
\endxy
,\\ & 
\xy
(1,-5)*{\ldots},
(-13,-7)*{_{i_1}},
(-8,-7)*{_{i_2}},
(-3,-7)*{_{i_3}},
(7,-7)*{_{i_{n-1}}},
(13,-7)*{_{i_n}},
 (0,0)*{\blacktriangleright}="a",
(0,5)*{}="0",
(-12,-5)*{}="b_1",
(-8,-5)*{}="b_2",
(-3,-5)*{}="b_3",
(8,-5)*{}="b_4",
(12,-5)*{}="b_5",
\ar @{.} "a";"0" <0pt>
\ar @{-} "a";"b_2" <0pt>
\ar @{-} "a";"b_3" <0pt>
\ar @{-} "a";"b_1" <0pt>
\ar @{-} "a";"b_4" <0pt>
\ar @{-} "a";"b_5" <0pt>
\endxy,
\xy
(1,-5)*{\ldots},
(-13,-7)*{_{i_1}},
(-8,-7)*{_{i_2}},
(-3,-7)*{_{i_3}},
(7,-7)*{_{i_{p-1}}},
(13,-7)*{_{i_p}},
 (0,0)*{\circ}="a",
(0,5)*{}="0",
(-12,-5)*{}="b_1",
(-8,-5)*{}="b_2",
(-3,-5)*{}="b_3",
(8,-5)*{}="b_4",
(12,-5)*{}="b_5",
\ar @{.} "a";"0" <0pt>
\ar @{.} "a";"b_2" <0pt>
\ar @{.} "a";"b_3" <0pt>
\ar @{.} "a";"b_1" <0pt>
\ar @{.} "a";"b_4" <0pt>
\ar @{.} "a";"b_5" <0pt>
\endxy 
\left. \vphantom{\xy
(1,-5)*{\ldots},
(-13,-7)*{_{i_1}},
(-8,-7)*{_{i_2}},
(-3,-7)*{_{i_3}},
(7,-7)*{_{i_{q-1}}},
(13,-7)*{_{i_q}},
 (0,0)*{\bullet}="a",
(0,5)*{}="0",
(-12,-5)*{}="b_1",
(-8,-5)*{}="b_2",
(-3,-5)*{}="b_3",
(8,-5)*{}="b_4",
(12,-5)*{}="b_5",
\ar @{-} "a";"0" <0pt>
\ar @{-} "a";"b_2" <0pt>
\ar @{-} "a";"b_3" <0pt>
\ar @{-} "a";"b_1" <0pt>
\ar @{-} "a";"b_4" <0pt>
\ar @{-} "a";"b_5" <0pt>
\endxy} \right \rangle_{p,q\geq 2,k,m,n \geq 1}.
 \end{split}
\]
Representation of this operad in a pair of vector spaces $V^1$ and $V^2$ is the structure of two $A_\infty$ algebras, $(V^1,\mu^1)$ and $(V^2,\mu^2)$, two $A_\infty$ morphisms, $f,g:(V^1,\mu^1)\to (V^2,\mu^2)$ and a homotopy $h$ between the morphism $h:f \to g.$ The action of the differential was described earlier.
\end{theorem}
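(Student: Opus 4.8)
\emph{Overall plan.} The statement splits into a geometric part and an algebraic part, and I would establish them in that order. Geometrically, I would show that $\{\overline{Conf}_n(\R)\}_{n\ge 1}$, together with $\{\overline{C}_n(\R)\}$ and $\{\widehat{\mathfrak C}_n(\R)\}$ (each appearing in two roles), assembles into a two-coloured operad in the category of compact smooth manifolds with corners, and that its complex of fundamental chains, with the boundary operator as differential, is the dg free operad $\mathcal Ho(As)_\infty$ displayed above; then, algebraically, I would unwind what a representation of a free operad amounts to. For the geometric part the plan is to imitate, essentially verbatim, the treatment of $\overline{C}_\bullet(\R)$ and of $\widehat{\mathfrak C}_\bullet(\R)$ as manifolds with corners given earlier in Section~2, and to invoke the machinery of \cite{Me} which turns such an operad of manifolds with corners into a dg free operad.

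\emph{Manifold structure, operad, free dg operad.} First I would index the strata of $\overline{Conf}_n(\R)$ by the two-coloured trees that are to constitute the basis of $\mathcal Ho(As)_\infty$: a spine of $\circ$-vertices near the root carrying the $A_\infty$-structure of $V^2$, with corollas of type $\blacktriangleleft$, $\blacktriangledown$, $\blacktriangleright$ grafted into its inputs in that left-to-right order and with at most one $\blacktriangledown$, and $\bullet$-trees carrying the $A_\infty$-structure of $V^1$ grafted on top. Next I would build, around the stratum indexed by a tree $t$, a coordinate chart by combining the $\eps$-magnified substitution scheme used for $\overline{C}_\bullet(\R)$, the $\tau$-rescaling scheme used for $\widehat{\mathfrak C}_\bullet(\R)$, and one extra parameter in $[-1,1]$ recording the centre of mass; its boundary is reached by letting the $\eps$'s tend to $0$, the ratios of consecutive $\tau$'s tend to $0$, and the centre-of-mass parameter tend to $\pm 1$. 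From these charts I would read off that the codimension-one boundary is exactly the union listed just before the theorem — including the two strata in which all $n$ points run to $+\infty$, resp.\ $-\infty$, at bounded mutual distance — and that operadic composition (insertion of a configuration at a point, or at $\pm\infty$) is a smooth map; associativity and $S_n$-equivariance would be immediate from the explicit formulas. By \cite{Me} the complex of fundamental chains is then a dg free operad, and freeness is precisely the assertion that the poset of strata of $\overline{Conf}_n(\R)$ coincides with the tree basis above: the top cells $\overline{C}_q(\R)$, $\widehat{\mathfrak C}_n(\R)$, $\overline{Conf}_m(\R)$, $\overline{C}_p(\R)$ supply the generators $\bullet$, $\blacktriangleleft/\blacktriangleright$, $\blacktriangledown$, $\circ$ (each in degree equal to the dimension of the corresponding cell), and the differential is the signed sum of codimension-one faces. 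The relation $\partial^2=0$ with the displayed signs $\eps(k,l)$, $\eps(k;n_1,\dots,n_k)$ and the homotopy sign $s$ would follow from the usual pairwise cancellation of codimension-two strata, once the orientation conventions of Section~2 are extended to $\overline{Conf}_\bullet(\R)$.

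\emph{Representations.} A morphism of dg operads out of a free operad into the endomorphism operad $\mathcal{E}nd_{V^1,V^2}$ of the pair $(V^1,V^2)$ is the same thing as a choice of multilinear map for each generator such that the differential is respected. I would assign $\mu^1_q$ to the $q$-ary $\bullet$-corolla, $\mu^2_p$ to the $p$-ary $\circ$-corolla, $f_k$ to the $k$-ary $\blacktriangleleft$-corolla, $g_n$ to the $n$-ary $\blacktriangleright$-corolla and $h_m$ to the $m$-ary $\blacktriangledown$-corolla; the requirement that $\partial$ be respected then decouples, generator by generator, into: the $A_\infty$-relations for $\mu^1$ and for $\mu^2$ (on the $\bullet$- and $\circ$-corollas the differential acts as in $\overline{C}_\bullet(\R)$, i.e.\ as in $\mathcal A_\infty$); the $A_\infty$-morphism relations for $f$ and for $g$ (the sub-operads generated by $\{\bullet,\blacktriangleleft,\circ\}$ and by $\{\bullet,\blacktriangleright,\circ\}$ are two copies of $\mathcal Mor(As)_\infty$); and, on the $\blacktriangledown$-corolla, exactly the relation characterising a homotopy between the $A_\infty$-morphisms $f$ and $g$. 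Matching these with the characterisations of $A_\infty$-algebras, $A_\infty$-morphisms and their homotopies obtained at the end of Section~1 — in particular matching the orientation-induced sign on the $\blacktriangledown$-face of $\overline{Conf}_n(\R)$ with the Koszul sign $s$ there — would complete the identification.

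\emph{Main obstacle.} The genuinely delicate part will be the sign bookkeeping: fixing an orientation on each of $\overline{C}_n(\R)$, $\widehat{\mathfrak C}_n(\R)$ and $\overline{Conf}_n(\R)$, transporting it through the recursively defined, notation-heavy coordinate charts to all codimension-one faces, and checking that the induced signs are precisely those displayed — equivalently, that they reproduce $\eps(k,l)$, $\eps(k;n_1,\dots,n_k)$ and the homotopy sign $s$, and that $\partial^2=0$. A secondary but real point is to argue that the codimension-one stratification is complete as stated; the subtle cases are the two strata where all $n$ points escape to $+\infty$ or to $-\infty$ at bounded mutual distance — which are codimension one only because of the overall rescaling — and the mixed strata where some points go to $-\infty$, some stay finite, and some go to $+\infty$.
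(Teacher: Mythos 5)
Your proposal follows essentially the same route as the paper: enumerate the codimension-one boundary strata of $\overline{Conf}_n(\R)$ (collapse of a cluster, all points escaping to $\pm\infty$ at bounded mutual distance, and the mixed strata indexed by partitions $A_1\cup\ldots\cup A_{k_-}\cup F\cup B_1\cup\ldots\cup B_{k_+}$), identify them with products of $\overline{C}_\bullet(\R)$, $\widehat{\mathfrak C}_\bullet(\R)$ and $\overline{Conf}_\bullet(\R)$ via tree-indexed charts, invoke the fundamental-chain machinery of \cite{Me} for the free dg operad structure, and then match the generator-by-generator relations with the algebraic characterisations of $A_\infty$ algebras, morphisms and homotopies from Section~1. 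The paper's own proof is in fact terser (it argues ``by inspection,'' extrapolating from the $n=2,3$ examples), so your additional attention to the coordinate charts, orientations and signs only fills in details the paper leaves implicit.
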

\begin{proof}
The proof is by inspection. We have worked out the cases of two and three points in detail and we can see that they correspond the to algebraic formulas of the previous section. The general case is treated in complete analogy. 

Let $p$ be a configuration of $n$ points on the real line $p=(x_1 <x_2 < \ldots <x_n) \in Conf_n(\R)$. The possible codimension 1 boundary strata can arise in three different ways.
\begin{enumerate}
\item A connected subset $A=(x_i < x_{i+1} < \ldots x_{i+k-1})$ of points collapsing into single point; 
A limit point
\[
p
\longrightarrow  \tilde{p} = (a_1 < a_2 < \ldots a_{i-1} < a_i = a_{i+1} = \ldots = a_{i+k-1} < a_{i+k}< \ldots < a_n). 
\] 
Points of this type can be identified with $ \overline{Conf}_{n-k+1}(\R) \times \overline{C}_k(\R).$
$$
\xymatrix{ 
\xy
(-10,-7)*{_{x_1}},
(-6,-7)*{_{x_2}},
(-2,-7)*{_{x_3}},
(3,-5)*{\ldots},
(10,-7)*{_{x_n}},
(0,0)*{\blacktriangledown}="a",
(0,5)*{}="0",
(-10,-5)*{}="b_1",
(-6,-5)*{}="b_2",
(-2,-5)*{}="b_4",
(10,-5)*{}="b_3",
\ar @{.} "a";"0" <0pt>
\ar @{-} "a";"b_1" <0pt>
\ar @{-} "a";"b_2" <0pt>
\ar @{-} "a";"b_3" <0pt>
\ar @{-} "a";"b_4" <0pt>
\endxy & \ar@{->}[rr]_{\begin{array}{c} _{p \longrightarrow \tilde{p}} \\ _{||p_A|| \longrightarrow 0}\end{array}} &  &  & 
\begin{xy}
<0mm,0mm>*{\blacktriangledown},
<0mm,0.8mm>*{};<0mm,5mm>*{}**@{.},
<-9mm,-5mm>*{\ldots},
<-9mm,-7mm>*{_{x_1\ \,   \ \   \, x_{i-1}}},
<13mm,-7mm>*{_{x_{i+k}\ \, \ \ \  \, x_n}},
<0mm,-10mm>*{...},
<14mm,-5mm>*{\ldots},
<-0.7mm,-0.3mm>*{};<-13mm,-5mm>*{}**@{-},
<-0.6mm,-0.5mm>*{};<-6mm,-5mm>*{}**@{-},
<0.6mm,-0.3mm>*{};<20mm,-5mm>*{}**@{-},
<0.3mm,-0.5mm>*{};<8mm,-5mm>*{}**@{-},
<0mm,-0.5mm>*{};<0mm,-4.3mm>*{}**@{-},
<0mm,-5mm>*{\bullet};
<-5mm,-10mm>*{}**@{-},
<-2.7mm,-10mm>*{}**@{-},
<2.7mm,-10mm>*{}**@{-},
<5mm,-10mm>*{}**@{-},
<2mm,-12mm>*{_{x_{i}}\ \  \ \ _{x_{i+k-l}}},
\end{xy}
}
$$
\item All $n$ points moving in a cluster towards $\pm \infty;$ A limit point $p \longrightarrow \pm (\infty , \infty , \ldots, \infty)$ where the distance between points remain finite, e.g. it could look like $p = (t+\lambda_1,t+\lambda_2, \ldots, t+\lambda_n)$ with $\lambda_1 < \lambda_2 < \ldots <\lambda_n$ and $t \longrightarrow \pm \infty$.  Limit points of this type can be identified with $\widehat{\mathfrak C}_n (\R).$
$$
\xymatrix{ 
\xy
(-10,-7)*{_{x_1}},
(-6,-7)*{_{x_2}},
(-2,-7)*{_{x_3}},
(3,-5)*{\ldots},
(10,-7)*{_{x_n}},
(0,0)*{\blacktriangledown}="a",
(0,5)*{}="0",
(-10,-5)*{}="b_1",
(-6,-5)*{}="b_2",
(-2,-5)*{}="b_4",
(10,-5)*{}="b_3",
\ar @{.} "a";"0" <0pt>
\ar @{-} "a";"b_1" <0pt>
\ar @{-} "a";"b_2" <0pt>
\ar @{-} "a";"b_3" <0pt>
\ar @{-} "a";"b_4" <0pt>
\endxy & \ar@{->}[rr]_{\begin{array}{c} _{p \longrightarrow (-\infty, \ldots, -\infty)} \\ _{||p_{ij}|| \longrightarrow c_{ij} > 0} \\ _{\mbox{ for all } i\neq j}\end{array}} &  &  & 
\xy
(1,-5)*{\ldots},
(-13,-7)*{_{x_1}},
(-8,-7)*{_{x_2}},
(-3,-7)*{_{x_3}},
(7,-7)*{_{x_{n-1}}},
(13,-7)*{_{x_n}},
 (0,0)*{\blacktriangleleft}="a",
(0,5)*{}="0",
(-12,-5)*{}="b_1",
(-8,-5)*{}="b_2",
(-3,-5)*{}="b_3",
(8,-5)*{}="b_4",
(12,-5)*{}="b_5",
\ar @{.} "a";"0" <0pt>
\ar @{-} "a";"b_2" <0pt>
\ar @{-} "a";"b_3" <0pt>
\ar @{-} "a";"b_1" <0pt>
\ar @{-} "a";"b_4" <0pt>
\ar @{-} "a";"b_5" <0pt>
\endxy
}
$$
$$
\xymatrix{ 
\xy
(-10,-7)*{_{x_1}},
(-6,-7)*{_{x_2}},
(-2,-7)*{_{x_3}},
(3,-5)*{\ldots},
(10,-7)*{_{x_n}},
(0,0)*{\blacktriangledown}="a",
(0,5)*{}="0",
(-10,-5)*{}="b_1",
(-6,-5)*{}="b_2",
(-2,-5)*{}="b_4",
(10,-5)*{}="b_3",
\ar @{.} "a";"0" <0pt>
\ar @{-} "a";"b_1" <0pt>
\ar @{-} "a";"b_2" <0pt>
\ar @{-} "a";"b_3" <0pt>
\ar @{-} "a";"b_4" <0pt>
\endxy & \ar@{->}[rr]_{\begin{array}{c} _{p \longrightarrow (\infty, \ldots, \infty)} \\ _{||p_{ij}|| \longrightarrow c_{ij} > 0} \\ _{\mbox{ for all } i\neq j}\end{array}} &  &  & 
\xy
(1,-5)*{\ldots},
(-13,-7)*{_{x_1}},
(-8,-7)*{_{x_2}},
(-3,-7)*{_{x_3}},
(7,-7)*{_{x_{n-1}}},
(13,-7)*{_{x_n}},
 (0,0)*{\blacktriangleright}="a",
(0,5)*{}="0",
(-12,-5)*{}="b_1",
(-8,-5)*{}="b_2",
(-3,-5)*{}="b_3",
(8,-5)*{}="b_4",
(12,-5)*{}="b_5",
\ar @{.} "a";"0" <0pt>
\ar @{-} "a";"b_2" <0pt>
\ar @{-} "a";"b_3" <0pt>
\ar @{-} "a";"b_1" <0pt>
\ar @{-} "a";"b_4" <0pt>
\ar @{-} "a";"b_5" <0pt>
\endxy
}
$$
\item For each $k\geq 2$ the $n$ points converge to $k=k_-+1+k_+$ clusters where $k_-$ clusters move to $-\infty,$ $k_+$ clusters move to $+\infty$ and one cluster where each point converge to a finite point. Within each of the $k_-+k_+$ clusters moving to $\pm \infty$ the distance between points remain finite, while the distance from any two points from different clusters tend to $\infty.$ Every such configuration is determined by a disjoint union of connected subsets $A_1 \cup \ldots \cup A_{k_-} \cup F \cup B_1 \cup \ldots \cup B_{k_+}=[n]$ is with $\inf A_1 < \inf A_2 < \ldots < \inf A_{k_-}< \inf F < \inf B_1 < \inf B_2 < \ldots < \inf B_{k_+},$ and limit points of this type can then be identified with $$\overline{C}_k(\R) \times \widehat{\mathfrak C}_{|A_1|}(\R) \times \ldots \times \widehat{\mathfrak C}_{|A_{k_-}|}(\R) \times \overline{Conf}_{|F|} (\R) \times \widehat{\mathfrak C}_{|B_1|}(\R) \times \ldots \times \widehat{\mathfrak C}_{|B_{k_+}|}(\R).$$
$$
\xymatrix{ 
\xy
(-10,-7)*{_{x_1}},
(-6,-7)*{_{x_2}},
(-2,-7)*{_{x_3}},
(3,-5)*{\ldots},
(10,-7)*{_{x_n}},
(0,0)*{\blacktriangledown}="a",
(0,5)*{}="0",
(-10,-5)*{}="b_1",
(-6,-5)*{}="b_2",
(-2,-5)*{}="b_4",
(10,-5)*{}="b_3",
\ar @{.} "a";"0" <0pt>
\ar @{-} "a";"b_1" <0pt>
\ar @{-} "a";"b_2" <0pt>
\ar @{-} "a";"b_3" <0pt>
\ar @{-} "a";"b_4" <0pt>
\endxy & \ar@{->}[rr]_{\begin{array}{c} _{p \longrightarrow (-\infty, \ldots, a_1,\ldots,a_{|F|},\ldots,\infty)} \\ _{||p_{A_i}|| \longrightarrow c_{i} < \infty} \\_{||p_{B_j}|| \longrightarrow d_{j} < \infty} \\ _{\mbox{ for all } (i,j) \in [k_-] \times [k_+] }\end{array}} &  &  & 
\begin{xy}
(18,-1)*{...},
(42,-1)*{...},
(12,-7)*{_{\ldots}},
(22,-7)*{_{\ldots}},
(31,-7)*{_{\ldots}},
(41,-7)*{_{\ldots}},
(50,-7)*{_{\ldots}},
(11,-10)*{_{A_1}},
(22,-10.5)*{_{A_{k_-}}},
(30,-10)*{_{F}},
(40,-10)*{_{B_1}},
(50,-10.5)*{_{B_{k_+}}},
(30,7)*{\circ}="a",
(13,0)*{\blacktriangleleft}="b_0",
(23,0)*{\blacktriangleleft}="b_3",
(30,0)*{\blacktriangledown}="b_4",
(37,0)*{\blacktriangleright}="b_5",
(47,0)*{\blacktriangleright}="b_7",
(30,13)*{}="0",
(8,-7)*{}="d_1",
(10,-7)*{}="d_3",
(14,-7)*{}="d_2",
(18,-7)*{}="e_1",
(20,-7)*{}="e_3",
(24,-7)*{}="e_2",
(27,-7)*{}="f_1",
(29,-7)*{}="f_3",
(33,-7)*{}="f_2",
(37,-7)*{}="g_1",
(39,-7)*{}="g_2",
(43,-7)*{}="g_3",
(46,-7)*{}="i_1",
(48,-7)*{}="i_2",
(52,-7)*{}="i_3",
\ar @{.} "a";"0" <0pt>
\ar @{.} "a";"b_0" <0pt>
\ar @{.} "a";"b_3" <0pt>
\ar @{.} "a";"b_4" <0pt>
\ar @{.} "a";"b_5" <0pt>
\ar @{.} "a";"b_7" <0pt>
\ar @{-} "b_0";"d_1" <0pt>
\ar @{-} "b_0";"d_2" <0pt>
\ar @{-} "b_0";"d_3" <0pt>
\ar @{-} "b_3";"e_1" <0pt>
\ar @{-} "b_3";"e_2" <0pt>
\ar @{-} "b_3";"e_3" <0pt>
\ar @{-} "b_4";"f_1" <0pt>
\ar @{-} "b_4";"f_2" <0pt>
\ar @{-} "b_4";"f_3" <0pt>
\ar @{-} "b_5";"g_1" <0pt>
\ar @{-} "b_5";"g_2" <0pt>
\ar @{-} "b_5";"g_3" <0pt>
\ar @{-} "b_7";"i_1" <0pt>
\ar @{-} "b_7";"i_3" <0pt>
\ar @{-} "b_7";"i_2" <0pt>
\end{xy}
}
$$
\end{enumerate} 

\end{proof}
\begin{subsection}{The space $\overline{Conf}_n(\R)$ as a smooth manifold with corners.}
We shall endow the space $\overline{Conf}_n(\R)$ with a manifold structure in an almost identical procedure to how the space $\widehat{\mathfrak C}_n(\R)$ was treated. For every tree $t \in \mathcal Ho (As)_\infty$ we define the sets $vert_{\bullet,\circ}(t)$, $vert_{\blacktriangleleft,\blacktriangleright}(t)$ and $vert_{\blacktriangledown}$ as the vertices of $t$ marked by $\{\bullet,\circ\},$ $\{\blacktriangleleft,\blacktriangleright\}$ or $\blacktriangledown$, respectively. For the tree $t$ we define $Conf_t(\R)$ as a product; $$Conf_t(\R) := \prod_{v \in vert_{\bullet,\circ} (t)} C_{|in(v)|}(\R)  \times \prod_{v \in vert_{\blacktriangleleft,\blacktriangleright}(t)} \mathfrak C_{|in(v)|} (\R) \times \prod_{v \in vert_{\blacktriangledown}} Conf_n(\R).$$
We can describe the space $\overline{Conf}_n(\R)$ as a stratified union of spaces; $$\overline{Conf}_n(\R) = \prod_{t \in \mathcal Ho(As)_\infty(n)} Conf_t(\R).$$
We shall define a coordinate chart $U_t$ around every boundary stratum $Conf_t(\R)$ with a metric tree. We associate to $t$ the metric tree $t_{metric}$ with for
\begin{enumerate}
\item  every internal edge of the types $\xy (0,3)*{\blacktriangleleft}="u", (0,-3)*{\bullet}="n", \ar @{-} "u";"n" <0pt> \endxy$, $\xy (0,3)*{\blacktriangleright}="u", (0,-3)*{\bullet}="n", \ar @{-} "u";"n" <0pt> \endxy $ or $\xy (0,3)*{\blacktriangledown}="u", (0,-3)*{\bullet}="n", \ar @{-} "u";"n" <0pt> \endxy $ a small positive parameter $\epsilon;$
\item  every vertex of a dashed corolla associate a large positive number $\tau,$ 
\[\xy (0,0)*{\circ}="o", (2,1)*{_\tau}, (0,5)*{}="u", (-10,-5)*{}="l_1", (-5,-5)*{}="l_2", (1,-4)*{\ldots}="l_3", (10,-5)*{}="l_4" \ar @{.}, "o";"u" <0pt> \ar @{.} "o";"l_1" <0pt> \ar @{.} "o";"l_2" <0pt> \ar @{.} "o";"l_4" <0pt>,  \endxy\]
\item  every subgraph of $t_{metric}$ of the type $ \xy (0,3)*{\circ}="u", (0,-3)*{\circ}="n",(2,4)*{\tau_1}, (2,-2)*{\tau_2}, \ar @{.} "u";"n"<0pt> \endxy$ an inequality $\tau_1 > \tau_2.$
\end{enumerate} 
\begin{example} We consider a specific tree and associate the metric tree to it. The general method should be clear from this description. Let $t$ be the following tree 
\[
\xy
(0,23)*{}="0",
(0,15)*{\circ}="a",
(-5,7)*{\circ}="b_1",
(8,7)*{\blacktriangleright}="b_2",
(-13,0)*{\blacktriangleleft}="c_1",
(-3,0)*{\blacktriangledown}="c_2",
(-15,-5)*{\bullet}="d_1",
(-3,-5)*{\bullet}="d_2",
(-5,-10)*{\bullet}="e",
(-18,-10)*{}="1",
(-13,-10)*{}="5",
(-8,-15)*{}="2",
(-2,-15)*{}="3",
(1,-10)*{}="4",
(8,0)*{}="6",
(-18,-12)*{_1},
(-13,-12)*{_5},
(-8,-17)*{_2},
(-2,-17)*{_3},
(1,-12)*{_4},
(8,-2)*{_6},
\ar @{.} "0";"a" <0pt>
\ar @{.} "b_1";"a" <0pt>
\ar @{.} "b_2";"a" <0pt>
\ar @{-} "b_2";"6" <0pt>
\ar @{.} "b_1";"c_1" <0pt>
\ar @{.} "b_1";"c_2" <0pt>
\ar @{-} "c_1";"d_1" <0pt>
\ar @{-} "c_2";"d_2" <0pt>
\ar @{-} "d_1";"1" <0pt>
\ar @{-} "d_1";"5" <0pt>
\ar @{-} "d_2";"e" <0pt>
\ar @{-} "d_2";"4" <0pt>
\ar @{-} "e";"2" <0pt>
\ar @{-} "e";"3" <0pt>
\endxy
\]
Then the associated metric tree, $t_{metric},$ is given by 
\[
\xy
(0,23)*{}="0",
(0,15)*{\circ}="a",
(-5,7)*{\circ}="b_1",
(-2,16)*{_{\tau_1}}="",
(-7,8)*{_{\tau_2}}="",
(8,7)*{\blacktriangleright}="b_2",
(-13,0)*{\blacktriangleleft}="c_1",
(-16,-2)*{_{\epsilon_1}}="",
(-3,0)*{\blacktriangledown}="c_2",
(-5,-2)*{_{\epsilon'}}="",
(-15,-5)*{\bullet}="d_1",
(-3,-5)*{\bullet}="d_2",
(-6,-7)*{_{\epsilon_2}}="",
(-5,-10)*{\bullet}="e",
(-18,-10)*{}="1",
(-13,-10)*{}="5",
(-8,-15)*{}="2",
(-2,-15)*{}="3",
(1,-10)*{}="4",
(8,0)*{}="6",
(-18,-12)*{_1},
(-13,-12)*{_5},
(-8,-17)*{_2},
(-2,-17)*{_3},
(1,-12)*{_4},
(8,-2)*{_6},
\ar @{.} "0";"a" <0pt>
\ar @{.} "b_1";"a" <0pt>
\ar @{.} "b_2";"a" <0pt>
\ar @{-} "b_2";"6" <0pt>
\ar @{.} "b_1";"c_1" <0pt>
\ar @{.} "b_1";"c_2" <0pt>
\ar @{-} "c_1";"d_1" <0pt>
\ar @{-} "c_2";"d_2" <0pt>
\ar @{-} "d_1";"1" <0pt>
\ar @{-} "d_1";"5" <0pt>
\ar @{-} "d_2";"e" <0pt>
\ar @{-} "d_2";"4" <0pt>
\ar @{-} "e";"2" <0pt>
\ar @{-} "e";"3" <0pt>
\endxy
\]\end{example}

The coordinate chart $U_t \subset \overline{Conf}_n(\R)$ is now defined to be isomorphic to the manifold with corners,
\[
\begin{split}
(l,+\infty]^{|vert_{\circ}(t)|} & \times [0,s)^{|edge_{\bullet}^{\blacktriangleleft,\blacktriangleright,\blacktriangledown}(t)|} \times \prod_{v \in vert_{\circ,\bullet}(t)} C_{|in(v)|}^{st} (\R) \\ & \times \prod_{v \in vert_{\blacktriangleleft,\blacktriangleright}(t)} \mathfrak C_{|in(t)|}^{st}(\R) \times \prod_{v \in vert_{\blacktriangledown}(t)} Conf_{|in(v)|}(\R)
\end{split}
\]
where $vert_{\circ}$ denotes the set of vertices of type $\circ$, $vert_{\circ,\bullet}$ denotes the set of vertices of type $\circ$ or $\bullet$ and so forth. The set $edge_{\bullet}^{\blacktriangleleft,\blacktriangleright, \blacktriangledown}$ is give set of edges of type  $ \xy (0,3)*{\blacktriangleleft}="u", (0,-3)*{\bullet}="n", \ar @{-} "u";"n" <0pt> \endxy,$ $ \xy (0,3)*{\blacktriangleright}="u", (0,-3)*{\bullet}="n", \ar @{-} "u";"n" <0pt> \endxy$ or $ \xy (0,3)*{\blacktriangledown}="u", (0,-3)*{\bullet}="n", \ar @{-} "u";"n" <0pt> \endxy.$ The isomorphism $\Phi_t$ between the coordinate chart $U_t$ and the product above is read from the metric tree. The map is given in coordinates, for the specific tree in the above example, as follows
\[
\begin{array}{cccccccccccccccccccc}
(l,+\infty]^2 & \times & [0,s)^3 & \times & C_{2}^{st} (\R) & \times &C_2^{st}(\R) & \times &C_2^{st}(\R)  \\ (\tau_1,\tau_2) & \times & (\epsilon_1,\epsilon_2,\epsilon') & \times & (x''_1,x''_2) & \times & (x'_1,x'_2) & \times & (x_1,x_5) \\&\\ C_2^{st}(\R) & \times & C_2^{st}(\R) & \times &  \mathfrak C_{1}^{st}(\R) & \times &\mathfrak C_{1}^{st}(\R) & \times & Conf_1(\R)& \\   (x',x_4) & \times & (x_2,x_3) & \times & x_6 & \times & s & \times & u\\ &&&&&&&&& \longrightarrow & Conf_6(\R) \\ &&&&& &&&&& (y_1, \ldots, y_6)
\end{array}
\]

such that 
\[
\begin{array}{cccc}
y_1=&\tau_1 x''_1+\tau_2 x_1 +t +\epsilon_1 x_1 & y_2 =& \tau_1 x''+\tau_2 x' + u+\epsilon' (x'+\epsilon_2 x_2)  \\
y_3= &\tau_1 x''+\tau_2 x' + u+\epsilon' (x'+\epsilon_2 x_3)  & y_4 =& \tau_1 x''+\tau_2 x' + u+\epsilon' x_4 \\
y_5=&\tau_1 x''_1+\tau_2 x_1 +t +\epsilon_1 x_5 & y_6 = &\tau_1 x''+ x_6 
\end{array}
\]

The boundary strata in $U_t$ are given by allowing formally $\tau_1=\infty, \tau_2 = \infty$ such that $\tau_1/\tau_2 =0$ and $\epsilon_1 =0,\epsilon_2=0,$ $\epsilon'=0.$
\end{subsection} 
\begin{subsection}{The Cohomology of $\mathcal Ho(As)_\infty$}
We will need two results in order to calculate the cohomology of the operad $\mathcal Ho(As)_\infty.$ 
\begin{theorem}
Let $P$ be a koszul operad. Define the two colored operad $\mathcal Mor(P)$ whose representations are two $P$-algebras and a $P$-algebra morphism between them. The operad $\mathcal Mor(P)$ has a minimal model given by the operad $\mathcal Mor(P)_\infty$ whose representations are two $P_\infty$-algebras and a homotopy $P_\infty$-morphism between them.
\end{theorem}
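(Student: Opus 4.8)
The plan is to exhibit $\mathcal Mor(P)_\infty$ as the cobar construction of a two-colour coaugmented cooperad and to deduce that it is a minimal model of $\mathcal Mor(P)$ from the Koszulness of $P$.

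Fix a quadratic presentation of $P$ and let $\mathcal C$ be its Koszul dual cooperad, so that $P_\infty=\Omega\mathcal C$ is the minimal model of $P$: its generating collection in arity $n$ is $s^{-1}\overline{\mathcal C}(n)$ and the cobar differential splits one generator into two along the infinitesimal decomposition $\Delta_{(1)}\colon\overline{\mathcal C}\to\overline{\mathcal C}\circ_{(1)}\overline{\mathcal C}$. First I would assemble the two-colour coaugmented cooperad $\mathcal Mor(\mathcal C)$: two copies of $\mathcal C$, one in each colour, together with a family of ``morphism'' co-generators indexed by $\mathcal C$ (the full cooperad, not the coideal) running from colour $1$ to colour $2$, the arity-one co-generator being placed in the degree that accounts for the honest degree-zero generator $f$. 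A direct inspection of the generators and of the cobar differential then identifies $\Omega\,\mathcal Mor(\mathcal C)$ with the quasi-free two-colour dg operad whose algebras are two $P_\infty$-algebras and a homotopy $P_\infty$-morphism between them, i.e. with $\mathcal Mor(P)_\infty$; concretely, the differential of a morphism generator $f_c$ is the sum of the terms $\nu_{c'}\circ(f_{c_1},\dots,f_{c_k})$, a colour-$2$ operation fed by several $f$'s, and $f_{c'}\circ_i\mu_{c''}$, an $f$ precomposed with a colour-$1$ operation, read off from the two kinds of decompositions of $c$. Each such term is a composite of at least two generators, so $\mathcal Mor(P)_\infty$ is quasi-free with decomposable differential, of exactly the shape required of a minimal model; it only remains to produce a quasi-isomorphism $\varphi\colon\mathcal Mor(P)_\infty\to\mathcal Mor(P)$.

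The map $\varphi$ is the evident projection: on each colour it is the canonical $\Omega\mathcal C\to P$, nonzero only on the weight-one generators, and it sends the arity-one morphism generator to $f\in\mathcal Mor(P)$ and every higher $f_c$ to zero. Checking that $\varphi$ is a morphism of dg operads is a finite verification that reproduces the $P$-relations in the two colours together with the compatibility relation $f\circ\mu=\mu\circ(f,\dots,f)$ on generators $\mu$ of $P$. To see that $\varphi$ is a quasi-isomorphism I would filter $\mathcal Mor(P)_\infty$ by Koszul weight, the sum over the internal vertices of a decorated tree of the weights of the decorating elements of $\mathcal C$. The cobar differential preserves this weight, so the complex is a direct sum of finite subcomplexes, one in each weight, and the piece of weight $w$ is a two-colour incarnation of the Koszul complex of $P$. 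By Koszulness of $P$ the two colour-pure blocks have homology $P$ concentrated in the top weight and in degree zero; the morphism block one writes as a twisting of the colour-$1$ and colour-$2$ Koszul complexes of $P$ along the arity-one generator and kills away from the top weight by a K\"unneth-type comparison argument. Passing to homology then recovers $\mathcal Mor(P)$ with zero differential, which is the assertion.

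The main obstacle is to make the notion of ``Koszul weight'' legitimate here, since $\mathcal Mor(P)$ is not quadratic: the relation $f\circ\mu=\mu\circ(f,\dots,f)$ has a cubic right-hand side whenever $\mu$ is a binary generator of $P$, so the usual Koszul-duality dictionary does not apply verbatim and one must work in the inhomogeneous/filtered framework for coloured operads. I would circumvent this either through rewriting and distributive-law techniques --- producing a PBW-type normal form of $\mathcal Mor(P)$ over the sub-operad generated by $P\sqcup P$ and the single operation $f$, which turns the Koszulness of $\mathcal Mor(P)$ into a formal consequence of that of $P$ --- or by checking by hand that the morphism block of the putative Koszul complex is the total complex of a bicomplex built from the acyclic complexes $\mathcal C\circ P$ in the two colours. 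In either case the argument bottoms out at the standing hypothesis that $P$ is Koszul.
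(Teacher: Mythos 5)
The first thing to note is that the paper does not actually prove this statement: its entire proof is the citation ``See \cite{MV09a}, \cite{MV09b}'', where the result is obtained with the Koszul-duality machinery for coloured operads/properads. So there is no internal argument to compare against; your sketch is, in effect, an outline of how the cited literature proves it, and in that sense the approach is the right one. Your identification of $\mathcal Mor(P)_\infty$ as a quasi-free two-colour operad of cobar type --- two copies of the generators $s^{-1}\overline{\mathcal C}$, one per colour, plus a mixed-colour family indexed by (a suspension of) all of $\mathcal C$, with the differential of a morphism generator splitting into colour-$2$ operations fed by several $f$'s and $f$'s precomposed with colour-$1$ operations --- matches the known description, and the observation that this differential is decomposable, hence of minimal shape, is correct. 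You also correctly isolate the real obstruction: $\mathcal Mor(P)$ is not quadratic because of the quadratic--cubic relation $f\circ\mu=\mu\circ(f,\dots,f)$, so plain quadratic Koszul duality does not apply and one must work in the inhomogeneous/filtered coloured setting (this is exactly what \cite{VdL} and \cite{MV09a}, \cite{MV09b} supply).

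The genuine gap is that the decisive step --- the claim that the projection $\varphi\colon\mathcal Mor(P)_\infty\to\mathcal Mor(P)$ is a quasi-isomorphism --- is asserted rather than proved. The weight-filtration argument disposes of the two colour-pure blocks by Koszulness of $P$, but for the mixed block you only say it ``kills away from the top weight by a K\"unneth-type comparison argument'', and then acknowledge that the framework making the weight grading legitimate is itself in doubt, offering two alternative repairs (rewriting/distributive laws, or a hand-built bicomplex from $\mathcal C\circ P$) without carrying out either. Since the acyclicity of that mixed block \emph{is} the content of the theorem (everything else in your text is bookkeeping of generators, degrees and signs), what you have is a correct plan rather than a proof. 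To close it you would either have to verify coassociativity of the putative two-colour cooperad and invoke the inhomogeneous coloured Koszul duality theorems of \cite{VdL}, \cite{MV09a}, \cite{MV09b} precisely, or write down explicitly the two-sided Koszul-type complex (of the shape $P\circ\mathcal C\circ P$ distributed over the two colours) that computes the mixed block and prove its acyclicity from the acyclicity of the one-sided Koszul complexes of $P$; neither step is automatic, and the K\"unneth-style reduction in particular needs an actual argument because the two colour actions interact through the arity-one generator.
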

\begin{proof}
See \cite{MV09a},\cite{MV09b}.
\end{proof}
\begin{corollary}\label{Mor(As)}
The operad $\mathcal Mor(As)$ has a minimal model given by $\mathcal Mor(As)_\infty.$
\end{corollary}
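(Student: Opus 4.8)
The plan is to specialize the theorem above to the case $P = As$. The first step is to recall the well-known fact that the associative operad $As$ is Koszul, with minimal model the dg operad $\mathcal A_\infty = As_\infty$ whose representations are $A_\infty$-algebras --- precisely the operad described in Section 1 via the reduced tensor coalgebra picture. Since the hypothesis of the preceding theorem is then satisfied, we obtain at once that $\mathcal Mor(As)$ admits a minimal model, namely $\mathcal Mor(As)_\infty$.

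The second step is to read off what $\mathcal Mor(As)_\infty$ is explicitly. By the theorem its representations are a pair of $P_\infty$-algebras together with a homotopy $P_\infty$-morphism between them; for $P = As$ this means a pair of $A_\infty$-algebras $(V,m^V)$, $(W,m^W)$ and a collection of maps $f_n \colon V^{\ot n} \to W$ satisfying the $A_\infty$-morphism relations of Section 1. Thus $\mathcal Mor(As)_\infty$ coincides with the two-colored dg operad whose configuration-space model was constructed in Subsection \ref{confmorass}. This identification is a matter of unwinding definitions and uses nothing beyond Proposition \ref{map} and the associated morphism theorem of Section 1.

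I do not anticipate a genuine obstacle: the corollary is a direct instance of the quoted theorem, the only extra ingredient being the Koszulness of $As$, which is classical. The one mild subtlety is keeping track of the operadic (de)suspension conventions implicit in passing from $As$ to its Koszul dual and hence to $\mathcal A_\infty$; these are already incorporated into the conventions of Section 1 and do not affect the conclusion.
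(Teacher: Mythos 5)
Your proposal is correct and matches the paper's intent: the corollary is exactly the specialization of the preceding theorem to $P=As$, using only the classical Koszulness of the associative operad, which is all the paper relies on (it states the corollary without further proof). The extra identification of $\mathcal Mor(As)_\infty$ with the two-colored operad of Section 2 is a harmless unwinding of definitions and does not change the argument.
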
 

\begin{lemma} \label{comparison}
Let $f:B \to C$ be a map of filtered complexes, where both $B$ and $C$ are complete and exhaustive. Fix $r\geq 0.$ Suppose that $f^r:E_{pq}^r(B) \cong E_{pq}^r(C)$ for all $p$ and $q$. Then $f:\mathrm H(B) \to \mathrm H(C)$ is an isomorphism.
\end{lemma}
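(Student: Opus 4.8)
The plan is to run the classical comparison-for-spectral-sequences argument, pushing the isomorphism from the $E^{r}$-page up to $H_{*}$; the only non-formal ingredient is the convergence theorem for complete exhaustive filtrations. First, since $f$ is a morphism of filtered complexes it induces a morphism of the associated spectral sequences: each $f^{s}$ is a chain map for $d^{s}$ and $f^{s+1}$ is the map it induces on homology, because $E^{s+1}=H(E^{s},d^{s})$. Hence an induction on $s\geq r$, applying the five lemma to the short exact sequences computing $\ker d^{s}$ and $\operatorname{coker}d^{s}$ on each bidegree, shows that $f^{s}_{pq}$ is an isomorphism for every $s\geq r$ and all $p,q$.

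Next I would pass to the limit term. Write, as usual, $\{Z^{s}_{pq}\}_{s\geq r}$ for the decreasing tower of cycles and $\{B^{s}_{pq}\}_{s\geq r}$ for the increasing tower of boundaries inside $E^{r}_{pq}(B)$, so that $E^{s}_{pq}(B)=Z^{s}_{pq}/B^{s}_{pq}$ and $E^{\infty}_{pq}(B)=\big(\bigcap_{s}Z^{s}_{pq}\big)\big/\big(\bigcup_{s}B^{s}_{pq}\big)$, and similarly for $C$. The statement that each $f^{s}$ is a well-defined isomorphism of these subquotients says precisely that the isomorphism $f^{r}_{pq}$ carries $Z^{s}_{pq}(B)$ onto $Z^{s}_{pq}(C)$ and $B^{s}_{pq}(B)$ onto $B^{s}_{pq}(C)$ for every $s\geq r$. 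A short diagram chase --- with injectivity of $f^{r}_{pq}$ used to see that a preimage of an element lying in every $Z^{s}_{pq}(C)$ itself lies in every $Z^{s}_{pq}(B)$ --- then shows that $f^{r}_{pq}$ restricts to isomorphisms $\bigcap_{s}Z^{s}_{pq}(B)\xrightarrow{\,\cong\,}\bigcap_{s}Z^{s}_{pq}(C)$ and $\bigcup_{s}B^{s}_{pq}(B)\xrightarrow{\,\cong\,}\bigcup_{s}B^{s}_{pq}(C)$ (the latter because image commutes with directed unions). Passing to the quotient gives that $f^{\infty}_{pq}\colon E^{\infty}_{pq}(B)\to E^{\infty}_{pq}(C)$ is an isomorphism for all $p,q$.

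Finally, because the filtrations on $B$ and $C$ are complete and exhaustive, the complete convergence theorem for filtered complexes (as in Weibel's \emph{An Introduction to Homological Algebra}) applies: the spectral sequence of $B$ converges to $H_{*}(B)$, so the induced filtration on $H_{n}(B)$ is complete and exhaustive and $E^{\infty}_{pq}(B)=F_{p}H_{p+q}(B)/F_{p-1}H_{p+q}(B)$, and likewise for $C$; moreover $f$ respects these filtrations on homology. By the previous step $f$ induces an isomorphism on the associated graded $\operatorname{gr}_{p}H_{n}$, and a filtered five-lemma / successive-approximation argument --- exhaustiveness to get started, completeness (so that the relevant $\varprojlim^{1}$ vanishes, which it does once every $\operatorname{gr}$-map is an isomorphism) to finish --- upgrades this to an isomorphism $H_{n}(B)\cong H_{n}(C)$ for all $n$, which is the assertion.

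The step I expect to be the genuine obstacle is the last one. Identifying $E^{\infty}$ with the associated graded of $H_{*}$, and then lifting an isomorphism of associated gradeds to an isomorphism of the filtered objects, both honestly require the hypothesis ``complete and exhaustive'': either conclusion can fail for filtrations that are merely exhaustive, or merely complete, so at this point the argument must invoke (or reprove) the complete convergence theorem rather than remain purely formal. Everything before it --- the induction over pages and the passage to $E^{\infty}$ --- is formal naturality together with the five lemma. Since Lemma~\ref{comparison} is needed below only as a black box in the computation of the cohomology of $\mathcal Ho(As)_{\infty}$, I would be content to cite the convergence theorem from the literature rather than reprove it here.
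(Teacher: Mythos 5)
Your first two steps are sound and are exactly the formal part of the classical argument: propagating the isomorphism from the $E^{r}$-page to every page $s\geq r$ by functoriality of homology, and the tower argument identifying $E^{\infty}_{pq}=\bigcap_{s}Z^{s}_{pq}/\bigcup_{s}B^{s}_{pq}$ and showing that $f^{\infty}_{pq}$ is an isomorphism. The genuine gap is in your last step. For filtrations that are merely complete and exhaustive, the complete convergence theorem you propose to cite (Weibel, Theorem 5.5.10) does \emph{not} apply: it carries an additional regularity hypothesis on the spectral sequence (the differentials leaving each bidegree must eventually vanish), and without it the spectral sequence of $B$ need not converge, even weakly, to $\mathrm H_{*}(B)$ --- that is, $E^{\infty}_{pq}(B)$ need not be the associated graded of the induced filtration on $\mathrm H_{p+q}(B)$, and that induced filtration need not be complete. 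Your parenthetical that the relevant $\varprojlim^{1}$ vanishes ``once every gr-map is an isomorphism'' is not a justification: the $\varprojlim^{1}$ obstruction lives in towers attached to $B$ and to $C$ separately and is not controlled by the comparison map. So the chain ``$E^{\infty}$ iso $\Rightarrow$ associated-graded iso $\Rightarrow$ $\mathrm H$ iso'' breaks at its first link in the stated generality; this is exactly the trap the unbounded comparison lemma is designed to avoid.

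Note that the paper itself offers no proof beyond citing Weibel; the result being cited is the Eilenberg--Moore comparison theorem (Weibel, Theorem 5.5.11), and its proof takes a different route from yours precisely because of the issue above: one puts the induced filtration on the mapping cone $A$ of $f$, uses the long exact sequence of $E^{r}$-terms to conclude $E^{r+1}(A)=0$, and then proves directly that a complete, exhaustive filtered complex whose spectral sequence vanishes at some finite page is acyclic (here regularity is automatic, since the spectral sequence is zero, and completeness plus exhaustiveness finish the argument); acyclicity of the cone gives $\mathrm H(B)\cong\mathrm H(C)$. Alternatively, your argument becomes correct verbatim if one adds a hypothesis ensuring regularity, e.g.\ a bounded filtration --- which does hold in the paper's application, where for each arity the filtration by number of internal vertices is finite --- but as a proof of the lemma as stated, the final step must go through the cone argument (or cite Theorem 5.5.11 itself) rather than the complete convergence theorem.
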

This result is known as the comparison lemma, and can be found in a textbook on homological algebra, e.g. \cite{We}.

We can now state our result.

\begin{theorem}
The cohomology of $\mathcal Ho(As)_\infty$ is the operad $\mathcal Mor(As)$ whose representations are a pair of associative algebras and a morphism of associative algebras between them.
\end{theorem}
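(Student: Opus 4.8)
The plan is to compute the cohomology of $\mathcal Ho(As)_\infty$ in degree zero by hand, and then to show that there is no cohomology in higher degrees by comparing $\mathcal Ho(As)_\infty$ with the minimal model $\mathcal Mor(As)_\infty$ of Corollary~\ref{Mor(As)} by means of the comparison Lemma~\ref{comparison}. Observe first that, since the corolla $\overline{C}_q(\R)$ sits in degree $q-2$, the corollas $\widehat{\mathfrak C}_k(\R)$ in degree $k-1$, and the corolla $\overline{Conf}_m(\R)$ in degree $m$, the free operad $\mathcal Ho(As)_\infty$ is concentrated in non-negative degrees; hence it is enough to identify the degree-zero cohomology and to show the cohomology vanishes in positive degrees, after which the operad statement follows automatically, because the cohomology of a dg operad is an operad and the comparison maps below are morphisms of dg operads.

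In degree zero the only generators are the binary $\bullet_2,\circ_2$ and the unary $\blacktriangleleft_1,\blacktriangleright_1$, and every degree-zero element is a cycle; the image of $\partial$ in degree zero is the operadic ideal generated by $\partial\bullet_3$ and $\partial\circ_3$ (associativity of the two products), by $\partial\blacktriangleleft_2$ and $\partial\blacktriangleright_2$ (compatibility of $\blacktriangleleft_1$, respectively $\blacktriangleright_1$, with the products), and by $\partial\blacktriangledown_1=\blacktriangleright_1-\blacktriangleleft_1$, read off from the computation of $\partial$ on $Conf_2(\R)$, which identifies the two unary generators. Quotienting the free two-coloured operad on $\{\bullet_2,\circ_2,\blacktriangleleft_1,\blacktriangleright_1\}$ by this ideal leaves two associative products and a single morphism compatible with them, i.e. $\mathcal Mor(As)$.

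For the vanishing in positive degrees I would use the morphism of dg operads $\iota\colon\mathcal Mor(As)_\infty\hookrightarrow\mathcal Ho(As)_\infty$ that is the identity on the corollas $\bullet,\circ$ and sends the arity-$n$ morphism corolla of $\mathcal Mor(As)_\infty$ to $\blacktriangleleft_n$; this is a chain map because, as recorded just after Theorem~\ref{hoass}, the differential of $\mathcal Ho(As)_\infty$ on the $\overline{C}_\bullet(\R)$- and $\widehat{\mathfrak C}_\bullet(\R)$-corollas coincides with that of $\mathcal Mor(As)_\infty$. It suffices to show $\iota$ is a quasi-isomorphism: then Corollary~\ref{Mor(As)} gives $H(\mathcal Ho(As)_\infty)\cong H(\mathcal Mor(As)_\infty)=\mathcal Mor(As)$. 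I would prove this via the increasing exhaustive filtration $\mathcal Mor(As)_\infty=P_0\subseteq P_1\subseteq\cdots$ of $\mathcal Ho(As)_\infty$ by the sub-dg-operads $P_p$ generated by $\bullet,\circ,\blacktriangleleft$ together with the corollas $\blacktriangleright_m,\blacktriangledown_m$ with $m\le p$. Inspecting the boundary formula of Theorem~\ref{hoass}, one checks that $\partial\blacktriangleright_m$ involves only $\blacktriangleright_{<m},\bullet,\circ$, and that $\partial\blacktriangledown_m$ equals $\pm\blacktriangleright_m$ plus terms lying in $P_{m-1}$; in particular each $P_p$ is a sub-dg-operad, and the filtration is finite in every arity, hence complete and exhaustive. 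Replacing the generator $\blacktriangleright_m$ by $\blacktriangleright'_m:=\partial\blacktriangledown_m$ turns the differential into $\partial\blacktriangledown_m=\blacktriangleright'_m$, $\partial\blacktriangleright'_m=0$, so that the successive quotient $P_p/P_{p-1}$, graded by the number of nodes labelled $\blacktriangledown_p$ or $\blacktriangleright'_p$, becomes a direct sum of tensor products of a complex built from $P_{p-1}$ with copies of the acyclic complex $\bigl(\blacktriangledown_p\xrightarrow{\ \partial\ }\blacktriangleright'_p\bigr)$, and therefore is acyclic. Thus $\operatorname{gr}\iota$ is an isomorphism on the $E^1$-page, and Lemma~\ref{comparison} promotes this to the desired isomorphism $H(\iota)$.

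The hard part is the bookkeeping in the previous paragraph: one must use the explicit differential of Theorem~\ref{hoass}, with its coefficients $\epsilon(k,l)$ and $\epsilon(k;n_1,\dots,n_k)$, to check with signs that the bare corolla $\blacktriangleright_m$ occurs in $\partial\blacktriangledown_m$ with invertible coefficient and that every remaining term of $\partial\blacktriangledown_m$, together with all of $\partial\blacktriangleright_m$, involves only surplus generators of strictly smaller arity — so that the substitution $\blacktriangleright_m\rightsquigarrow\blacktriangleright'_m$ is valid and $P_p/P_{p-1}$ is the tensor-acyclic complex claimed. Verifying the hypotheses of Lemma~\ref{comparison} and deducing the operad statement from $H$ being $\mathcal Mor(As)$ in degree zero and $0$ elsewhere are then routine.
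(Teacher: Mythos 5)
Your proposal is correct, but it runs the comparison in the opposite direction from the paper. The paper's proof uses the projection $\pi\colon\mathcal Ho(As)_\infty\twoheadrightarrow\mathcal Mor(As)_\infty$ that kills $\blacktriangledown$ and sends both $\blacktriangleleft$ and $\blacktriangleright$ to $\blacksquare$, filters \emph{both} operads by the number of internal vertices, identifies the two $E^1$-pages with the free operad on $\bullet,\circ,\blacktriangleleft,\blacktriangleright$ modulo the ideal $\langle\blacktriangleleft-\blacktriangleright\rangle$ (the only part of $\partial^0$ surviving on the associated graded being $\blacktriangledown_n\mapsto\pm(\blacktriangleleft_n-\blacktriangleright_n)$), and then invokes Lemma \ref{comparison} and Corollary \ref{Mor(As)}. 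You instead use the inclusion $\iota\colon\mathcal Mor(As)_\infty\hookrightarrow\mathcal Ho(As)_\infty$, $\blacksquare\mapsto\blacktriangleleft$ (legitimate, since $\partial\blacktriangleleft_n$ involves only $\blacktriangleleft,\bullet,\circ$, so the suboperad they generate is a dg copy of $\mathcal Mor(As)_\infty$), filter by the arity of the surplus generators $\blacktriangleright_m,\blacktriangledown_m$, and kill each graded quotient $P_p/P_{p-1}$ by the contractible pair $\blacktriangledown_p\to\blacktriangleright'_p$ after a triangular change of generators; your checks that $\partial\blacktriangleright_m$ and the non-leading part of $\partial\blacktriangledown_m$ only involve surplus generators of strictly smaller arity are indeed what the boundary formula gives. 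Both routes rest on the same two external inputs (Lemma \ref{comparison} and Corollary \ref{Mor(As)}); the paper's single vertex-count filtration disposes of all surplus generators at once and is shorter, while your arity-by-arity cancellation isolates exactly one sign check (that $\blacktriangleright_m$ occurs in $\partial\blacktriangledown_m$ with coefficient $\pm1$, automatic because it labels a single codimension-one stratum) at the cost of the change-of-generators bookkeeping and of verifying that $P_p/P_{p-1}$ really splits as claimed into tensor products with the acyclic two-term complex (the key point being that the induced differential neither creates nor destroys vertices labelled $\blacktriangledown_p$ or $\blacktriangleright'_p$). Two minor remarks: once $\iota$ is a quasi-isomorphism, your hand computation of the degree-zero cohomology is redundant, since minimality of $\mathcal Mor(As)_\infty$ already gives $H(\mathcal Mor(As)_\infty)=\mathcal Mor(As)$ concentrated in degree zero (though it is a useful sanity check and makes the generators-and-relations form of the answer explicit); and the relation $\partial\blacktriangledown_1=\pm(\blacktriangleright_1-\blacktriangleleft_1)$ is read off from $\overline{Conf}_1(\R)$ or from the general boundary formula, not from the $Conf_2(\R)$ example.
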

\begin{proof} There is a natural projection of operads $$\pi: \mathcal Ho(As)_\infty \twoheadrightarrow \mathcal Mor(As)_\infty.$$ We can describe this map on corollas by using the presentation of $\mathcal Ho(As)_\infty$ and $\mathcal Mor(As)_\infty$ from theorem \ref{hoass} and section \ref{confmorass}, respectively; 
\[
\begin{split} \pi \left( \xy
(1,-5)*{\ldots},
 (0,0)*{\blacktriangledown}="a",
(0,5)*{}="0",
(-12,-5)*{}="b_1",
(-8,-5)*{}="b_2",
(-3,-5)*{}="b_3",
(8,-5)*{}="b_4",
(12,-5)*{}="b_5",
\ar @{-} "a";"0" <0pt>
\ar @{-} "a";"b_2" <0pt>
\ar @{-} "a";"b_3" <0pt>
\ar @{-} "a";"b_1" <0pt>
\ar @{-} "a";"b_4" <0pt>
\ar @{-} "a";"b_5" <0pt>
\endxy \right) &= 0 
\\ 
\pi\left( \xy
(1,-5)*{\ldots},
 (0,0)*{\blacktriangleleft}="a",
(0,5)*{}="0",
(-12,-5)*{}="b_1",
(-8,-5)*{}="b_2",
(-3,-5)*{}="b_3",
(8,-5)*{}="b_4",
(12,-5)*{}="b_5",
\ar @{-} "a";"0" <0pt>
\ar @{-} "a";"b_2" <0pt>
\ar @{-} "a";"b_3" <0pt>
\ar @{-} "a";"b_1" <0pt>
\ar @{-} "a";"b_4" <0pt>
\ar @{-} "a";"b_5" <0pt>
\endxy \right) 
& = \pi \left(  \xy
(1,-5)*{\ldots}, 
(0,0)*{\blacktriangleright}="a",
(0,5)*{}="0",
(-12,-5)*{}="b_1",
(-8,-5)*{}="b_2",
(-3,-5)*{}="b_3",
(8,-5)*{}="b_4",
(12,-5)*{}="b_5",
\ar @{-} "a";"0" <0pt>
\ar @{-} "a";"b_2" <0pt>
\ar @{-} "a";"b_3" <0pt>
\ar @{-} "a";"b_1" <0pt>
\ar @{-} "a";"b_4" <0pt>
\ar @{-} "a";"b_5" <0pt>
\endxy \right) = \xy
(1,-5)*{\ldots}, 
(0,0)*{\blacksquare}="a",
(0,5)*{}="0",
(-12,-5)*{}="b_1",
(-8,-5)*{}="b_2",
(-3,-5)*{}="b_3",
(8,-5)*{}="b_4",
(12,-5)*{}="b_5",
\ar @{-} "a";"0" <0pt>
\ar @{-} "a";"b_2" <0pt>
\ar @{-} "a";"b_3" <0pt>
\ar @{-} "a";"b_1" <0pt>
\ar @{-} "a";"b_4" <0pt>
\ar @{-} "a";"b_5" <0pt>
\endxy 
\\ 
\pi \left(
\xy
(1,-5)*{\ldots}, 
(0,0)*{\bullet}="a",
(0,5)*{}="0",
(-12,-5)*{}="b_1",
(-8,-5)*{}="b_2",
(-3,-5)*{}="b_3",
(8,-5)*{}="b_4",
(12,-5)*{}="b_5",
\ar @{-} "a";"0" <0pt>
\ar @{-} "a";"b_2" <0pt>
\ar @{-} "a";"b_3" <0pt>
\ar @{-} "a";"b_1" <0pt>
\ar @{-} "a";"b_4" <0pt>
\ar @{-} "a";"b_5" <0pt>
\endxy \right) 
&= 
\xy
(1,-5)*{\ldots},
 (0,0)*{\bullet}="a",
(0,5)*{}="0",
(-12,-5)*{}="b_1",
(-8,-5)*{}="b_2",
(-3,-5)*{}="b_3",
(8,-5)*{}="b_4",
(12,-5)*{}="b_5",
\ar @{-} "a";"0" <0pt>
\ar @{-} "a";"b_2" <0pt>
\ar @{-} "a";"b_3" <0pt>
\ar @{-} "a";"b_1" <0pt>
\ar @{-} "a";"b_4" <0pt>
\ar @{-} "a";"b_5" <0pt>
\endxy 
\\
\pi \left( 
\xy
(1,-5)*{\ldots}, 
(0,0)*{\circ}="a",
(0,5)*{}="0",
(-12,-5)*{}="b_1",
(-8,-5)*{}="b_2",
(-3,-5)*{}="b_3",
(8,-5)*{}="b_4",
(12,-5)*{}="b_5",
\ar @{-} "a";"0" <0pt>
\ar @{-} "a";"b_2" <0pt>
\ar @{-} "a";"b_3" <0pt>
\ar @{-} "a";"b_1" <0pt>
\ar @{-} "a";"b_4" <0pt>
\ar @{-} "a";"b_5" <0pt>
\endxy
\right)
&=
\xy
(1,-5)*{\ldots}, 
(0,0)*{\circ}="a",
(0,5)*{}="0",
(-12,-5)*{}="b_1",
(-8,-5)*{}="b_2",
(-3,-5)*{}="b_3",
(8,-5)*{}="b_4",
(12,-5)*{}="b_5",
\ar @{-} "a";"0" <0pt>
\ar @{-} "a";"b_2" <0pt>
\ar @{-} "a";"b_3" <0pt>
\ar @{-} "a";"b_1" <0pt>
\ar @{-} "a";"b_4" <0pt>
\ar @{-} "a";"b_5" <0pt>
\endxy
\end{split} 
\]  
The map $\pi$ obviously respect the differentials of the operads.

We introduce a filtration on $\mathcal Ho(As)_\infty(n)$ and $\mathcal Mor(As)_\infty(n)$ on the number of internal vertices in a tree, $$F_P \mathcal Ho(As)_\infty(n) = \{x\in \mathcal Ho(As)_\infty(n) | \mbox{number of internal vertices of }x \geq p \}$$
and
$$F_P \mathcal Mor(As)_\infty(n) = \{x\in \mathcal Mor(As)_\infty(n) | \mbox{number of internal vertices of }x \geq p \}.$$
Clearly the differentials in $\mathcal Ho(As)_\infty$ and  $\mathcal Mor(As)_\infty$ respect these filtrations as the number of vertices can only stay the same or increase when the differentials are applied. Note that the filtrations are both exhaustive and complete, this follows from that the objects in question are finite dimensional for any given $n.$ The induced differential on $E_{pq}^0( \mathcal Mor(As)_\infty)$ will either map a corolla to zero or increase the number of vertices and therefore $E_{pq}^1( \mathcal Mor(As)_\infty) = \mathrm H(E_{pq}^0( \mathcal Mor(As)_\infty)) = E_{pq}^0( \mathcal Mor(As)_\infty).$ On the other hand, in the case of $E_{pq}^0( \mathcal Ho(As)_\infty),$ we have that the differential will map all trees except those containing a corolla of type 

\[\xy
%
 (0,0)*{\blacktriangledown}="a",
(0,5)*{}="0",
(-12,-5)*{}="b_1",
(-8,-5)*{}="b_2",
(-3,-5)*{}="b_3",
(8,-5)*{}="b_4",
(12,-5)*{}="b_5",
\ar @{.} "a";"0" <0pt>
\ar @{-} "a";"b_2" <0pt>
\ar @{-} "a";"b_3" <0pt>
\ar @{-} "a";"b_1" <0pt>
\ar @{-} "a";"b_4" <0pt>
\ar @{-} "a";"b_5" <0pt>
\endxy\]

to zero. We get that the image of $\partial^0: E^0_{pq}( \mathcal Ho( As) _ \infty) \to E^0_{pq}( \mathcal Ho(As) _ \infty)$ will consist of trees (operadically) generated by the difference of corollas; 

\[\left\langle \xy
%
 (0,0)*{\blacktriangleleft}="a",
(0,5)*{}="0",
(-12,-5)*{}="b_1",
(-8,-5)*{}="b_2",
(-3,-5)*{}="b_3",
(8,-5)*{}="b_4",
(12,-5)*{}="b_5",
\ar @{.} "a";"0" <0pt>
\ar @{-} "a";"b_2" <0pt>
\ar @{-} "a";"b_3" <0pt>
\ar @{-} "a";"b_1" <0pt>
\ar @{-} "a";"b_4" <0pt>
\ar @{-} "a";"b_5" <0pt>
\endxy - \xy
%
 (0,0)*{\blacktriangleright}="a",
(0,5)*{}="0",
(-12,-5)*{}="b_1",
(-8,-5)*{}="b_2",
(-3,-5)*{}="b_3",
(8,-5)*{}="b_4",
(12,-5)*{}="b_5",
\ar @{.} "a";"0" <0pt>
\ar @{-} "a";"b_2" <0pt>
\ar @{-} "a";"b_3" <0pt>
\ar @{-} "a";"b_1" <0pt>
\ar @{-} "a";"b_4" <0pt>
\ar @{-} "a";"b_5" <0pt> 
\endxy
\right\rangle \subset \mathcal Ho(\mathcal A _\infty).
\]

The first page is then determined; 
\[
\begin{split}
 & E_{pq}^1 ( \mathcal Ho(As)_\infty) \\ 
& = 
\frac{\mathcal Free \left\langle \xy
(1,-5)*{\ldots},
 (0,0)*{\bullet}="a",
(0,5)*{}="0",
(-12,-5)*{}="b_1",
(-8,-5)*{}="b_2",
(-3,-5)*{}="b_3",
(8,-5)*{}="b_4",
(12,-5)*{}="b_5",
\ar @{-} "a";"0" <0pt>
\ar @{-} "a";"b_2" <0pt>
\ar @{-} "a";"b_3" <0pt>
\ar @{-} "a";"b_1" <0pt>
\ar @{-} "a";"b_4" <0pt>
\ar @{-} "a";"b_5" <0pt>
\endxy, \xy
(1,-5)*{\ldots},
 (0,0)*{\circ}="a",
(0,5)*{}="0",
(-12,-5)*{}="b_1",
(-8,-5)*{}="b_2",
(-3,-5)*{}="b_3",
(8,-5)*{}="b_4",
(12,-5)*{}="b_5",
\ar @{.} "a";"0" <0pt>
\ar @{.} "a";"b_2" <0pt>
\ar @{.} "a";"b_3" <0pt>
\ar @{.} "a";"b_1" <0pt>
\ar @{.} "a";"b_4" <0pt>
\ar @{.} "a";"b_5" <0pt>
\endxy, \xy
(1,-5)*{\ldots},
 (0,0)*{\blacktriangleleft}="a",
(0,5)*{}="0",
(-12,-5)*{}="b_1",
(-8,-5)*{}="b_2",
(-3,-5)*{}="b_3",  
(8,-5)*{}="b_4",
(12,-5)*{}="b_5",
\ar @{.} "a";"0" <0pt>
\ar @{-} "a";"b_2" <0pt>
\ar @{-} "a";"b_3" <0pt>
\ar @{-} "a";"b_1" <0pt>
\ar @{-} "a";"b_4" <0pt>
\ar @{-} "a";"b_5" <0pt>
\endxy, \xy
(1,-5)*{\ldots},
 (0,0)*{\blacktriangleright}="a",
(0,5)*{}="0",
(-12,-5)*{}="b_1",
(-8,-5)*{}="b_2",
(-3,-5)*{}="b_3",
(8,-5)*{}="b_4",
(12,-5)*{}="b_5",
\ar @{.} "a";"0" <0pt>
\ar @{-} "a";"b_2" <0pt>
\ar @{-} "a";"b_3" <0pt>
\ar @{-} "a";"b_1" <0pt>
\ar @{-} "a";"b_4" <0pt>
\ar @{-} "a";"b_5" <0pt>
\endxy \right\rangle } 
{ \left\langle \xy
(1,-5)*{\ldots},
%
 (0,0)*{\blacktriangleleft}="a",
(0,5)*{}="0",
(-12,-5)*{}="b_1",
(-8,-5)*{}="b_2",
(-3,-5)*{}="b_3",
(8,-5)*{}="b_4",
(12,-5)*{}="b_5",
\ar @{.} "a";"0" <0pt>
\ar @{-} "a";"b_2" <0pt>
\ar @{-} "a";"b_3" <0pt>
\ar @{-} "a";"b_1" <0pt>
\ar @{-} "a";"b_4" <0pt>
\ar @{-} "a";"b_5" <0pt>
\endxy - \xy
(1,-5)*{\ldots},
%
 (0,0)*{\blacktriangleright}="a",
(0,5)*{}="0",
(-12,-5)*{}="b_1",
(-8,-5)*{}="b_2",
(-3,-5)*{}="b_3",
(8,-5)*{}="b_4",
(12,-5)*{}="b_5",
\ar @{.} "a";"0" <0pt>
\ar @{-} "a";"b_2" <0pt>
\ar @{-} "a";"b_3" <0pt>
\ar @{-} "a";"b_1" <0pt>
\ar @{-} "a";"b_4" <0pt>
\ar @{-} "a";"b_5" <0pt> 
\endxy
\right\rangle } \\ 
& \cong E_{pq}^0(\mathcal Mor(As)_\infty) \\ 
&=E_{pq}^1 ( \mathcal Mor(As)_\infty)
\end{split}
\]
By the comparison lemma \ref{comparison} we find that the cohomology of $\mathcal Ho(As)_\infty$ is the same as that of $\mathcal Mor(As)_\infty,$ which by the lemma \ref{Mor(As)} is precisely $\mathcal Mor(As).$
\end{proof}
\begin{corollary} \label{model}
The operad $\mathcal Ho(As)_\infty$ is a non-minimal quasi-free model of $\mathcal Ho(As).$
\end{corollary}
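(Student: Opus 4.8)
The plan is to exhibit an explicit morphism of dg operads $\psi\colon \mathcal Ho(As)_\infty \to \mathcal Ho(As)$, to prove that it is a quasi-isomorphism, and to record that the differential of $\mathcal Ho(As)_\infty$ has a non-trivial linear part. By Theorem \ref{hoass} the operad $\mathcal Ho(As)_\infty$ is the free operad on the displayed collection of corollas, so it is quasi-free by construction; thus only the quasi-isomorphism property and the non-minimality need an argument.

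First I would define $\psi$ on the generating corollas: the binary corollas $\bullet$ and $\circ$ go to the two associative products $\mu^1,\mu^2$ of $\mathcal Ho(As)$; the arity-one corollas $\blacktriangleleft$ and $\blacktriangleright$ go to the two algebra morphisms $f,g$; the arity-one corolla $\blacktriangledown$ goes to the homotopy generator $h$; and every corolla of higher arity goes to $0$. Since the target has its only internal differential given by $d\,h=\pm(f-g)$ and is otherwise governed by associativity together with the multiplicativity/derivation relations among $\mu^1,\mu^2,f,g,h$, checking that $\psi$ is a chain map reduces to applying $\psi$ to the formulas for $\partial$ on the low-arity corollas. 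These are exactly the boundary computations carried out for $n=2$ and $n=3$ in the examples above: under $\psi$ every term built from a corolla of arity $\ge 3$ (or of arity $\ge 2$ in the morphism and homotopy colours) contains a factor sent to $0$, while the surviving low-arity terms collapse precisely to the defining relations of $\mathcal Ho(As)$. Hence $\psi$ intertwines the differentials. Equivalently, since the differential of $\mathcal Ho(As)_\infty$ is filtered by the arity of the generators, $\mathcal Ho(As)_\infty$ is a cofibrant dg operad, and $\psi$ may instead be produced by lifting the composite $\mathcal Ho(As)_\infty\xrightarrow{\pi}\mathcal Mor(As)_\infty\xrightarrow{\simeq}\mathcal Mor(As)$ through the surjection $\mathcal Ho(As)\twoheadrightarrow\mathcal Mor(As)$.

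Next I would show that $\psi$ is a quasi-isomorphism by fitting it into the commuting square
\[
\xymatrix{
\mathcal Ho(As)_\infty \ar[r]^-{\psi}\ar[d]_-{\pi} & \mathcal Ho(As)\ar[d]^-{p}\\
\mathcal Mor(As)_\infty \ar[r]^-{\simeq} & \mathcal Mor(As),
}
\]
whose left vertical arrow $\pi$ collapses $\blacktriangleleft$ and $\blacktriangleright$ to $\blacksquare$ and kills $\blacktriangledown$ — it was shown, in the course of proving the main theorem via the comparison Lemma \ref{comparison}, to be a quasi-isomorphism; whose bottom arrow is the minimal-model quasi-isomorphism of Corollary \ref{Mor(As)}; and whose right vertical arrow $p$ identifies $f$ with $g$ and sends $h$ to $0$. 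Commutativity is clear on generators. It then remains to check that $p$ is a quasi-isomorphism: because the only internal differential of $\mathcal Ho(As)$ is $d\,h=\pm(f-g)$, adjoining $g$ and $h$ to $\mathcal Mor(As)$ together with this relation leaves the cohomology unchanged; concretely, $p$ is split by $\phi\mapsto f$, $\mu^i\mapsto\mu^i$, and $\mathrm{id}-sp$ is killed, arity by arity, by the contracting homotopy determined on generators by $g\mapsto\mp h$ and $h,\mu^1,\mu^2,f\mapsto 0$, extended by the Leibniz rule forced by the operadic composition. With three of the four arrows quasi-isomorphisms, the $2$-out-of-$3$ property forces $\psi$ to be one as well.

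Finally, $\mathcal Ho(As)_\infty$ is \emph{non-minimal}: its differential has a non-vanishing linear part, for instance $\partial\,\blacktriangledown_1=\pm\,\blacktriangleright_1\mp\,\blacktriangleleft_1$; equivalently the cohomology $\mathcal Mor(As)$ is generated by strictly fewer corollas than $\mathcal Ho(As)_\infty$. Combining this with the quasi-isomorphism $\psi$ and the fact that $\mathcal Ho(As)_\infty$ is quasi-free yields the corollary. I expect the main obstacle to be the careful verification that $p\colon\mathcal Ho(As)\to\mathcal Mor(As)$ is a quasi-isomorphism with the signs and the operadic composition fully under control — i.e.\ that the arity-wise contraction indeed extends off generators — and, if one uses the explicit $\psi$ rather than the lifting, the parallel sign check that $\psi\partial=d_{\mathcal Ho(As)}\psi$ on every generator; the remaining steps are formal consequences of results already established.
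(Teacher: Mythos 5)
Your proposal follows essentially the same route as the paper: the natural projection $\mathcal Ho(As)_\infty \twoheadrightarrow \mathcal Ho(As)$, the computation that the only nontrivial differential in $\mathcal Ho(As)$ is $\partial h = f-g$ so that $\mathrm H(\mathcal Ho(As)) \cong \mathcal Mor(As)$, and the combination of this with the already-established fact that $\mathrm H(\mathcal Ho(As)_\infty) \cong \mathcal Mor(As)$. Your commuting square and two-out-of-three argument merely make explicit the quasi-isomorphism step that the paper compresses into ``by the previous theorem the corollary is now implied,'' and your remark on the linear part of the differential correctly accounts for the non-minimality, so the proposal is correct.
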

\begin{proof} There is a natural projection of operads $$p:\mathcal Ho(As)_\infty \twoheadrightarrow \mathcal Ho(As).$$ We determine the cohomology of the operad $\mathcal Ho(As).$ Let $H(\mathcal Ho(As)) = Z/B,$ then if $\mu_V$ and $\mu_W$ are the multiplications, $f,g: V\to W$ are the algebra morphisms and $h: f \sim g$ is the homotopy between them. We will have that $\partial f =\partial g = \partial \mu_V = \partial \mu_W=0$, so the generators all constitute cycles. The boundaries are generated by $\partial h = f-g.$ Hence $$ \mathrm H (\mathcal Ho(As)) = Z/B = \langle f,g,\mu_V,\mu_W\rangle / (f-g) \cong \langle [f],\mu_V,\mu_W\rangle$$ and we see that the cohomology is equal to $\mathcal Mor(As).$ By the previous theorem the corollary is now implied.
\end{proof}
\begin{corollary}
The operad $\mathcal Ho(As)_\infty$ is a non-minimal quasi-free model of $\mathcal Mor(As).$
\end{corollary}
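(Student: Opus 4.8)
The plan is to assemble this corollary from the two results that immediately precede it. By Theorem~\ref{hoass} the operad $\mathcal Ho(As)_\infty$ is, by its very construction, a free operad $\mathcal Free\langle\cdots\rangle$ carrying an operadic differential, so it is quasi-free; what remains is to produce a quasi-isomorphism onto $\mathcal Mor(As)$ and to observe that the model so obtained is not minimal.

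For the quasi-isomorphism I would reuse the maps already at hand. In the proof of the theorem computing $H(\mathcal Ho(As)_\infty)$ we constructed the operad projection $\pi\colon\mathcal Ho(As)_\infty\twoheadrightarrow\mathcal Mor(As)_\infty$ and, by applying the comparison lemma~\ref{comparison} to the filtration by the number of internal vertices, showed that $\pi$ is a quasi-isomorphism. By Corollary~\ref{Mor(As)}, the canonical projection $q\colon\mathcal Mor(As)_\infty\twoheadrightarrow\mathcal Mor(As)$ is the minimal model of $\mathcal Mor(As)$, hence a quasi-isomorphism. Composing, $q\circ\pi\colon\mathcal Ho(As)_\infty\to\mathcal Mor(As)$ is a quasi-isomorphism of dg operads, and together with the quasi-freeness noted above this exhibits $\mathcal Ho(As)_\infty$ as a quasi-free model of $\mathcal Mor(As)$. (Alternatively, one may compose the projection $p\colon\mathcal Ho(As)_\infty\twoheadrightarrow\mathcal Ho(As)$ of Corollary~\ref{model} with the collapse map $\mathcal Ho(As)\to\mathcal Mor(As)$ that sends the homotopy generator $h$ to $0$ and $f,g$ to $[f]$; the latter is a quasi-isomorphism since $H(\mathcal Ho(As))\cong\mathcal Mor(As)$ by the computation in the proof of Corollary~\ref{model}.)

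Finally I would record non-minimality. A quasi-free model $(\mathcal Free\langle E\rangle,\partial)$ is minimal precisely when its differential carries no linear part, i.e.\ takes values in the ideal of decomposable elements. This plainly fails for $\mathcal Ho(As)_\infty$: for every $m\geq 1$ the differential of the arity-$m$ corolla $\blacktriangledown$ contains the summand $\pm(\,\blacktriangleright-\blacktriangleleft\,)$, a nonzero $k$-linear combination of generators of the same arity. Hence $\partial$ has a nontrivial linear part and $\mathcal Ho(As)_\infty$ is a \emph{non-minimal} quasi-free model of $\mathcal Mor(As)$. I do not expect any real obstacle here; the only point deserving care is that ``model of $\mathcal Mor(As)$'' means an honest (direct or zig-zag) quasi-isomorphism to $\mathcal Mor(As)$ itself rather than merely the abstract isomorphism $H(\mathcal Ho(As)_\infty)\cong\mathcal Mor(As)$, which is why the explicit maps $\pi$ and $q$ (or $p$ and the collapse map) must be carried along.
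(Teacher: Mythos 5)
Your proposal is correct, and it essentially contains the paper's argument: the paper defines the quasi-isomorphism $\tilde p\colon\mathcal Ho(As)_\infty\twoheadrightarrow\mathcal Mor(As)$ precisely as the composite of $p$ from Corollary~\ref{model} with the projection $\mathcal Ho(As)\twoheadrightarrow\mathrm H(\mathcal Ho(As))\cong\mathcal Mor(As)$, which is exactly the alternative route you mention in parentheses. Your primary route, $q\circ\pi\colon\mathcal Ho(As)_\infty\to\mathcal Mor(As)_\infty\to\mathcal Mor(As)$, is an equally valid composition of the two quasi-isomorphisms already established (the comparison-lemma argument for $\pi$ and the minimal-model statement of Corollary~\ref{Mor(As)}), so the two approaches differ only in which intermediate operad they pass through. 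A small bonus of your write-up is that you make the non-minimality explicit: the differential of the $\blacktriangledown$ corolla has the linear part $\pm(\blacktriangleright-\blacktriangleleft)$, so the differential does not take values in decomposables; the paper asserts non-minimality without spelling this out. Your closing caution that ``model'' requires an actual quasi-isomorphism to $\mathcal Mor(As)$, not merely an abstract isomorphism of cohomology, is exactly the point the paper's construction of $\tilde p$ is addressing.
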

\begin{proof} The model-structure comes from the map $$\tilde{p}: \mathcal Ho(As)_\infty \twoheadrightarrow \mathcal Mor(As),$$ which is given by post-composing the map $p$ from corollary \ref{model} with the natural projection onto cohomology classes; $$\mathcal Ho(As) \twoheadrightarrow \mathrm H (\mathcal Ho(As)) \cong \mathcal Mor(As).$$
\end{proof}

\end{subsection}
\end{section}

 \end{document}